\colorlet{todored}{red}
\colorlet{todoblue}{blue}
\colorlet{todoorange}{orange}
\colorlet{todopurple}{purple}
\newtheorem{theorem}{Theorem}
\newtheorem{lemma}[theorem]{Lemma}
\newtheorem{assumption}[theorem]{Assumption}
\newtheorem{proposition}[theorem]{Proposition}
\newtheorem{remark}[theorem]{Remark}
\newtheorem{corollary}[theorem]{Corollary}
\newtheorem{definition}[theorem]{Definition}
\Crefname{assumption}{Assumption}{Assumptions}
\newcommand{\risk}{\mathcal{R}}
\newcommand{\cA}{\mathcal{A}}
\newcommand{\cF}{\mathcal{F}}
\newcommand{\real}{\mathbb{R}}
\newcommand{\embedding}{\hookrightarrow}
\newcommand{\funArg}[1]{\ifthenelse{\equal{#1}{}}  
	{}
	{[#1]}
}
\newcommand{\FunArg}[1]{\ifthenelse{\equal{#1}{}}  
	{}
	{({#1})}
}
\newcommand{\AVaR}[2][]{\mathrm{AVaR}_\beta\funArg{#2}}
\newcommand{\epiAVaR}[3][]{\mathrm{AVaR}_\beta^{#3}\funArg{#2}}
\newcommand{\smoothAVaR}[3][]{\sigma_\beta^{#3}\funArg{#2}}
\newcommand{\dist}[2]{\mathrm{dist}\FunArg{#1,#2}}
\newcommand{\deviation}[2]{\mathbb{D}\FunArg{#1,#2}}
\newcommand{\Risk}[2][]{\risk\funArg{#2}}
\newcommand{\dom}[2][]{\operatorname*{dom}\FunArg{#2}}
\newcommand{\bochner}[3][]{L^{#2}(\Xi, \cA, \mathbb{P}; #3)}
\newcommand{\bochnerreal}[3][]{L^{#2}(#3)}
\newcommand{\Lp}[1]{L^{#1}(\Omega, \cF, P)}
\newcommand{\tand}{\text{and}}
\newcommand{\pobj}{f}
\newcommand{\csp}{Z}
\newcommand{\fun}{h}
\newcommand{\du}{\ensuremath{\mathrm{d}}}
\newcommand{\adcsp}{Z_{\rm ad}}
\newcommand{\B}{\mathbf{B}}
\newcommand{\rhs}{\mathbf{b}}
\newcommand{\A}{\mathbf{A}}
\newcommand{\ssp}{U}
\newcommand{\wpone}{w.p.~$1$\xspace}
\newcommand{\inner}[3][]{( #2, #3 )_{#1}}
\newcommand{\spL}[2]{\mathscr{L}(#1, #2)}
\newcommand{\norm}[2][2]{\|#2\|_{#1}}
\newcommand{\Caratheodory}{Carath\'eodory}
\renewcommand{\natural}{\mathbb{N}}
\newcommand{\dualp}[3][]{\langle #2, #3 \rangle_{{#1}^*, #1}}
\newcommand{\dualpHzeroone}[3][]
{\langle #2, #3 \rangle_{H^{-1}(#1), H_0^1(#1)}}
\newcommand{\maxo}[2][]{(#2)^+}
\newcommand{\wto}{\rightharpoonup}
\newcommand{\domain}{D}
\DeclarePairedDelimiterXPP\cE[1]{\mathbb{E}}[]{}{%
	
	#1}
\renewcommand{\abstract}[1]
{
	{\small
		\textbf{Abstract.} {#1}
		\\
	}
}
\newcommand{\keywords}[1]
{
	{\small
		\textbf{Key words.} {#1}
		\\
	}
}
\newcommand{\amssubject}[1]
{
	{\small
		\textbf{AMS subject classifications.} {#1}
	}
}
\title{Asymptotic Consistency for Nonconvex Risk-Averse Stochastic Optimization with Infinite Dimensional Decision Spaces}
\author{Johannes Milz%
        \thanks{H.\ Milton Stewart School of Industrial and Systems Engineering, Georgia Institute of Technology, Atlanta, GA 30332,     USA        \texttt{johannes.milz@isye.gatech.edu}.}
     \and
        Thomas M.\ Surowiec \hspace{-2.5mm}
        \thanks{Department of Numerical
        	Analysis and Scientific Computing, Simula Research Laboratory, 0164 Oslo, Norway, \texttt{thomasms@simula.no}.}
            }
\date{March 31, 2023}
\begin{document}

\maketitle

\abstract{%
Optimal values and solutions of empirical approximations of stochastic optimization problems can be viewed as statistical estimators of their  true values. From this perspective, it is important to understand the asymptotic behavior of these estimators as the sample size goes to infinity. This area of study has a long tradition in stochastic programming. However, the literature is lacking consistency analysis for problems in which the decision variables are taken from an infinite dimensional space, which arise in optimal control, scientific machine learning, and statistical estimation. By exploiting the typical problem structures found in these applications that give rise to hidden norm compactness properties for solution sets,  we prove consistency results for nonconvex risk-averse stochastic optimization problems formulated in infinite dimensional space. The proof is based on  several crucial results from the theory of  variational convergence. The theoretical results are demonstrated for several important problem classes arising in the literature.
}

\par
\keywords{%
	asymptotic consistency,
	empirical approximation,
	sample average approximation,
	Monte Carlo sampling,
	risk-averse optimization,
	PDE-constrained optimization,
	uncertainty quantification,
	stochastic programming
}
\par
\amssubject{%
	90C15, 90C06, 62F12, 35Q93, 49M41, 49J52
}

\section{Introduction}\label{sec:intro}
The asymptotic behavior of empirical approximations is a central point of study in optimization under uncertainty.  There is a long tradition going back to the fundamental contributions \cite{Huber1967,Dupacova1988,Shapiro1989,WRoemisch_RSchultz_1991,Shapiro1991,King1993,Shapiro1993,Pflug1995,Shapiro2000c,STRachev_WRoemisch_2002,Pflug2003,Lachout2005,Shapiro2005,Robinson1996}.  These works have since given rise to standard derivation techniques for problems with finite dimensional decision spaces. There are in essence three main techniques used to obtain asymptotic statements. The first possibility  uses epi-convergence of sample-based approximations of objective functions over compact sets and therefore draws from powerful statements in the theory of variational convergence. The second type of method employs a uniform law of large numbers for sample-based approximations of objective functions. Finally, asymptotic statements can also be derived from stability estimates for optimal values and solutions with respect to probability semimetrics. This requires, amongst other things, that the class of integrands in the objective constitutes a $P$-uniformity class for the semimetric in question.

Given a general stochastic optimization problem
\begin{equation}\label{eq:abs-op}
	\min_{z \in Z_{\rm ad}} \mathbb E_{P}[F(z)],
\end{equation}
an empirical approximation would take the form
\begin{equation}\label{eq:emp-op}
	\min_{z \in Z_{\rm ad}} \mathbb E_{P_N}[F(z)],
\end{equation}
where the original probability measure $P$ is replaced by a (sequence of) typically discrete approximation(s) $P_N$ for $N \in \mathbb N$. For example, the probability measure $P_N$ could be an empirical probability measure associated with a random sample of size $N$ from $P$. This is a common approach often referred to as ``sample average approximation'' (SAA), see e.g., \cite{Kleywegt2002,Shapiro2021}. A data-driven viewpoint can be drawn from machine learning in which \eqref{eq:abs-op} represents the ``population risk minimization'' problem and \eqref{eq:emp-op} the corresponding ``empirical risk minimization'' problem. Here, the underlying probability measure of the data $P$ is typically unknown. It is therefore of interest to understand the behavior of solutions in the big data limit (as $N \to \infty$).

The main questions can be easily stated: Do the optimal values and  solution sets of \eqref{eq:emp-op} converge to their ``true'' counterparts for
\eqref{eq:abs-op} as $N$ passes to infinity and what is the strongest form of stochastic convergence that can be guaranteed? If we treat the $N$-dependent objects as statistical estimators of the true values and seek to prove at least convergence in probability, then these are questions of consistency, cf. \cite{Shapiro2013}.

Motivated by recent advances in
partial differential equation (PDE)-constrained optimization under uncertainty \cite{Garreis2019b,Kouri2018a}, scientific machine learning \cite{Berner2020,Nelsen2021}, nonconvex stochastic programming
\cite{Liu2022,Qi2021,Davis2022}, and statistical estimation \cite{Royset2019,Royset2020,Mei2018}, we provide such consistency results for stochastic optimization problems in which the decision variables $z$ may be taken in an infinite dimensional space $Z$. We will consider more general ``risk-averse'' problems in which the expectation $\mathbb E_{P}$ is allowed to be replaced by certain classes of convex risk functionals $\risk$. And as it is often lacking in the application areas mentioned above, we do not assume convexity of the integrand $F$.  For consistency results on finite dimensional risk-averse stochastic optimization problems, 
we refer the reader to
\cite{Dentcheva2017,Shapiro2013,Shapiro2021}.

From an abstract perspective, we consider stochastic optimization problems of the type
\begin{equation}
	\min_{z \in Z_{\rm ad}} \risk[F(z)] + \wp(z).
\end{equation}
Here, $Z_{\rm ad}$ is typically a closed convex subset of an infinite dimensional space; $\wp$ is a deterministic convex cost function; $F$ is a random integrand that typically depends on the solution of a differential equation subject to random inputs; and $\risk$ is a convex functional that acts as a numerical surrogate for our risk preference, e.g., a convex combination of $\mathbb E_{P}[X]$ and a semideviation 
$\mathbb E_{P}[\max\{0,X - \mathbb E_{P}[X]\}]$.

Despite the past successes in consistency analysis listed above, there is a major difficulty in extending the finite dimensional arguments to the infinite dimensional setting.  In order to use both the epigraphical as well as the uniform law of large numbers approaches, we need an appropriately defined \emph{norm} compact set that contains both the approximate $N$-dependent solutions as well as true solutions. It is not enough for the feasible set to be closed and bounded. For example, the simple set of pointwise bilateral constraints
\[
Z_{\rm ad} \coloneqq \big\{ \, z \in L^2(0,1) \colon \; 0 \le z(x) \le 1 \text{ for a.e. } x \in (0,1) \, \big\}
\]
is weakly sequentially compact in $L^2(0,1)$, but not norm compact. The literature is not void of results for infinite dimensional problems. However, the stability statements developed in \cite{MHoffhues_WRoemisch_TMSurowiec_2021,WRoemisch_TMSurowiec_2021}
and the large deviation-type bounds derived in \cite{Milz2021,Milz2022c}
have only been demonstrated for strongly convex risk-neutral problems. While it may be possible to extend some of these results to a risk-averse setting, it appears rather challenging to obtain statements about the consistency of minimizers without strong convexity.
In the recent preprint \cite{Milz2022}, consistency results
for optimal values and solutions are
established for risk-neutral PDE-constrained optimization
using a uniform law of large numbers. 
Our framework and that in \cite{Milz2022} are different.
Besides considering risk-neutral problems,
i.e., $\Risk{} = \mathbb{E}_P$,  the work \cite{Milz2022}
requires the integrands be continuously differentiable, the decision
space be a separable Hilbert space, and requires a specific strongly convex
control regularization in the objective function. Moreover, 
\cite{Milz2022} establishes consistency for exact solutions while we are able to
establish consistency for approximate solutions using epiconvergence.

The paper is structured as follows. In Section \ref{sec:prelim}, we introduce the basic notation, assumptions, and several preliminary results necessary for the remaining parts of the text. Afterwards, in Section \ref{sec:consist}, we present our main result. Finally, the utility of the main consistency result is demonstrated for several problem classes in Section \ref{sec:apps}.

\section{Notation, Assumptions, and Preliminary Results}\label{sec:prelim}
We introduce several concepts, notation and assumptions that are required in the
text.

\subsection{Probability and Function Spaces}\label{ssec:spaces}
Throughout the text, all spaces are defined over the real numbers $\mathbb R$
and metric spaces are equipped with their Borel $\sigma$-field. Let $\Xi$ be a complete separable metric space, $\mathcal{A}$ the associated Borel $\sigma$-algebra, and $\mathbb P : \mathcal{A} \to [0,1]$ a probability measure. The triple $(\Xi, \mathcal{A}, \mathbb{P})$ is always assumed to be a complete probability
space.
Throughout the manuscript, $(\Omega, \cF, P)$ is a probability space.

If $\Upsilon$ is a  Banach space, then its topological dual space is denoted by $\Upsilon^*$. Their dual pairing is denoted by
$\dualp[\Upsilon]{v}{w}$ for $v \in \Upsilon^*$, $w \in \Upsilon$.
If $\Upsilon$ is reflexive, we identify its bi-dual $(\Upsilon^*)^*$ with $\Upsilon$. Throughout the text, we will use $p \in [1,\infty)$ for a general integrability exponent. In the application section, we will consider problems involving random partial differential equations. These require several function spaces. The underlying physical domain $D \subset \mathbb{R}^d$ with $d \in \{1,2,3\}$ will always be an open bounded Lipschitz domain.

For a Banach space $(V, \left\| \cdot \right\|_V)$
we will denote the  Lebesgue--Bochner space 
$L^p(\Xi, \mathcal{A}, \mathbb{P}; V)$ of all 
strongly $\mathcal{A}$-measurable $V$-valued functions by
\begin{equation*}
	\label{eq:Bochnerraum}
	L^p(\Xi, \mathcal{A}, \mathbb{P}; V) =
	\lbrace   u :  \Xi \rightarrow V : u \text{ strongly 
		$\mathcal{A}$-measurable and }   \left\| u \right\|_{L^p(\Xi, \mathcal{A}, \mathbb{P}; V)} < \infty\rbrace
\end{equation*}
endowed with the natural norms
$
\left\| u \right\|_{L^p(\Xi, \mathcal{A}, \mathbb{P}; V)} = (\mathbb E_{\mathbb{P}}[\| u \|_{V}^p])^{1/p}
$
for $p \in [1,\infty)$
and for bounded fields: 
$\left\| u \right\|_{L^p(\Xi, \mathcal{A}, \mathbb{P}; V)}
= \underset{\xi \in \Xi}{{\mathbb{P}\text{-}\mathrm{ess}} \sup} \left\| u(\xi) \right\|_V$.
In the event that $V = \real$, we simply write 
$L^p(\Xi, \mathcal{A}, \mathbb{P})$. 
For the PDE applications, we use $L^p(D)$ to denote the usual Lebesgue space of $p$-integrable (or essentially bounded) functions over $D$. For more details on Lebesgue--Bochner spaces, we refer the reader to
\cite[Chapter III]{EHille_RSPhillips_1974}. We denote convergence in the norm by $\to$ and weak convergence by $\wto$.
For a sequence $(v_k)$, we denote by $(v_k)_{K}$ a subsequence
of $(v_k)$, where $K \subset \mathbb{N}$ is an infinite index set.

Given two random variables $X_1, X_2 \in \Lp{p}$ for $p \in [1,\infty)$, we say that $X_1$ and $X_2$ are \emph{distributionally equivalent} with respect to $P$ if $ P(X_1 \le t) =  P(X_2 \le t)$ for all $t \in \mathbb R$. A functional $\rho : \Lp{p} \to \mathbb R$ is said to be \emph{law invariant} with respect to $ P$ if for all distributionally equivalent random variables $X_1, X_2 \in  \Lp{p}$ we have $\rho(X_1) = \rho(X_2)$. In this setting, it therefore makes sense to use the (abuse of) notation $\rho(H_X)$, where $H_X(t) \coloneqq P(X \le t)$ with $t\in \mathbb R$ as opposed to $\rho(X)$. We caution that this does not mean we redefine the function $\rho$ over a space of c\`adl\`ag functions.
For a (cumulative) distribution function $H$ defined on $\real$, 
its quantile function $H^{-1}$ is defined by
$H^{-1}(t) \coloneqq \inf_{s\in \real} \{\,s \colon H(s) \geq t\,\}$ for
$t \in (0,1)$.
Let $(\Omega, \cF, P)$ be nonatomic and let
$\rho : \Lp{p} \to \mathbb R$ be law invariant for $p \in [1,\infty)$. Since
$(\Omega, \cF, P)$  is nonatomic, 
there exists a random variable $G : \Omega \to [0,1]$
with uniform distribution $\nu$ on $[0,1]$ \cite[Prop.\ A.7]{Dudley2011}
(see also \cite[Prop.\ 9.1.11]{Bogachev2007}).
Let $X \in \Lp{p}$ be a random variable.
Since $H_X^{-1}(G(\cdot))$ has the same distribution function
as that of $X$ \cite[Prop.\ 9.1.2]{Dudley2002}
and $H_X^{-1}(G(\cdot)) \in L^p(\Omega,\cF, P)$,
we have $\rho(H_{X}) = \rho(H_X^{-1}(G(\cdot)))$.
More generally, we write $\rho(H)$ instead of
$\rho(H^{-1}(G(\cdot)))$, provided that 
$H$ is a distribution function on $\real$ with
$\int_0^1 |H^{-1}(q)|^p \du \nu(q) < \infty$.

\subsection{Convex Analysis and Several Key Functionals}\label{ssec:cvx}
Given a Banach space $V$, the (effective) domain of an extended real-valued function $f : V \to (-\infty,\infty]$, will be denoted by $\dom{f} \coloneqq \{x \in V \colon f(x)< \infty\}$. We typically exclude convex functions that take the value $-\infty$. For $f: V \to (-\infty,\infty]$ and $\varepsilon > 0$, $x_{\varepsilon} \in V$ is an $\varepsilon$-minimizer of $f$ provided $\inf_{v \in V} f(v)$ is \emph{finite} and $f(x_{\varepsilon}) \le \inf_{v \in V} f(v) + \varepsilon$. 
The $\varepsilon$-solution set ($\varepsilon \ge 0$) is then the set
$\mathcal{S}^{\varepsilon} \coloneqq \{x \in V : f(x) \le \inf_{v \in V} f(v) + \varepsilon\}$, 
provided that $\inf_{v \in V} f(v)$ is finite.
We use the convention $\mathcal{S} = \mathcal{S}^{0}$.

Let $\Upsilon$ be a normed space.
For   $x \in \Gamma \subset \Upsilon$
and $\Psi \subset \Upsilon$, we define
\begin{align*}
	\dist{x}{\Psi} = \inf_{y\in \Psi}\, \norm[\Upsilon]{x-y}
	\quad \text{and} \quad
	\deviation{\Gamma}{\Psi} = \sup_{x\in \Gamma}\, 	\dist{x}{\Psi}.
\end{align*}
We recall that a Banach space $V$ has the
\emph{Radon--Riesz (Kadec--Klee) property}
if $v_k \to v$ whenever
$(v_{k}) \subset V$ is a sequence
with $v_{k} \wto v \in V$ and
$\norm[V]{v_{k}} \to \norm[V]{v}$
as $k \to\infty$.
More generally, we will say that a function $\varphi : V \to [0,\infty)$ is an \emph{R-function} if it is convex and
continuous, and if $v_k \to v$ as $k \to\infty$ whenever
$(v_{k}) \subset V$ is a sequence
with $v_{k} \wto v  \in V$ and
$\varphi(v_{k}) \to \varphi(v)$
as $k \to\infty$.
Notions related to  but different from
that of an R-function are available in the literature,
such as functions having the Kadec property and strongly rotund functions
\cite{Borwein1994,Borwein2010}. The notion of an
R-function is first introduced in our manuscript.
If $V$ is a reflexive Banach space, 
then there exists an R-function on $V$ \cite[p.\ 154]{Borwein1994}.
A notion of regularizers different from that of an R-function can be
found in  \cite{Juditsky2022}.

As  the following fact demonstrates, the class of R-functions is rather large
and includes, e.g., typical cost functions and regularizers used in PDE-constrained
optimization. See \Cref{subsect:ocpspde} for an example
of an R-function in the context of PDE-constrained optimization.
\begin{lemma}
	\label{lem:rfunction}
	Let $V$ be a Banach space. If $\wp:[0,\infty) \to [0,\infty)$
	is convex and strictly increasing and $\varphi : V \to [0,\infty)$
	is an R-function, then $\wp \circ \varphi$ is an R-function.
	In particular, if $V$ has the Radon--Riesz property,
	then $\wp \circ \norm[V]{\cdot}$ is an R-function.
\end{lemma}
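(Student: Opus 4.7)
The plan is to verify the three defining properties of an R-function in order: convexity, continuity, and the weak-to-strong passage. Convexity of $\wp \circ \varphi$ is immediate since $\wp$ is convex and nondecreasing and $\varphi$ is convex. Continuity of $\wp \circ \varphi$ reduces to continuity of $\wp$ on $[0,\infty)$: convex functions are continuous on the interior of their domain, and at the endpoint $0$, the bound $\wp(t) \le t \wp(1) + (1-t)\wp(0)$ for $t \in [0,1]$ combined with monotonicity ($\wp(0) \le \wp(t)$) yields $\wp(t) \to \wp(0)$, so $\wp$ is continuous on $[0,\infty)$ and composition with the continuous $\varphi$ is continuous.

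The substantive step is the R-function property. Suppose $v^k \wto v$ in $V$ with $\wp(\varphi(v^k)) \to \wp(\varphi(v))$, and set $a := \varphi(v)$, $\alpha_k := \varphi(v^k)$. Since $\varphi$ is convex and continuous, it is weakly sequentially lower semicontinuous, giving $\liminf_k \alpha_k \ge a$. The goal is to upgrade this to $\alpha_k \to a$, after which the R-function property of $\varphi$ delivers $v^k \to v$. To rule out a subsequence $\alpha_{k_j} \to \alpha^* \in (a, \infty]$: if $\alpha^*$ is finite, continuity and strict monotonicity of $\wp$ give $\wp(\alpha_{k_j}) \to \wp(\alpha^*) > \wp(a)$, contradicting the hypothesis $\wp(\alpha_k) \to \wp(a)$; if $\alpha^* = \infty$, we use that a strictly increasing convex function $\wp:[0,\infty) \to [0,\infty)$ necessarily satisfies $\lim_{t\to \infty}\wp(t) = \infty$ (otherwise the nondecreasing difference quotient $t \mapsto (\wp(t)-\wp(0))/t$ would tend to $0$, forcing $\wp$ to be constant on $[0,\infty)$ by convexity, violating strict monotonicity), again contradicting $\wp(\alpha_k) \to \wp(a) < \infty$. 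Consequently every subsequential limit of $(\alpha_k)$ equals $a$, so $\alpha_k \to a$, and then the R-function property of $\varphi$ yields $v^k \to v$.

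The ``in particular'' assertion follows by noting that on any Banach space $V$ the norm $\|\cdot\|_V$ is convex and continuous and maps into $[0,\infty)$; the Radon--Riesz property is precisely the statement that the norm satisfies the R-function closure condition. Hence $\|\cdot\|_V$ is an R-function, and the first part of the lemma applied with $\varphi = \|\cdot\|_V$ yields that $\wp \circ \|\cdot\|_V$ is an R-function.

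The main obstacle, and the only part requiring a genuine idea rather than bookkeeping, is the subsequence argument ruling out $\alpha^* > a$; it hinges on extracting the two consequences of strict monotonicity — injectivity of $\wp$ on finite values and coercivity $\wp(t)\to\infty$ — from the hypotheses without any additional regularity such as superlinearity or differentiability of $\wp$.
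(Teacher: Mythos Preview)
Your proof is correct. The paper reaches the key conclusion $\varphi(v^k)\to\varphi(v)$ by a shorter route: it notes that a strictly increasing function on $[0,\infty)$ has a continuous inverse, and applies $\wp^{-1}$ directly to the hypothesis $\wp(\varphi(v^k))\to\wp(\varphi(v))$. Your argument gets there instead by first invoking weak lower semicontinuity of $\varphi$ to obtain $\liminf_k\alpha_k\ge a$, and then running a subsequence dichotomy to exclude $\alpha^*\in(a,\infty]$. The inverse-function route is more economical and does not need the weak lsc step at all; your route is more explicit in isolating the two consequences of strict monotonicity (injectivity and coercivity) that do the work, which the continuous-inverse argument handles in one stroke since the sequence $\wp(\alpha_k)$ remains in the range of $\wp$ throughout.
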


\begin{proof}
The function $\wp \circ \varphi$
is convex and continuous.
Let $v_k \wto v$ and $\wp(\varphi(v_k)) \to \wp(\varphi(v))$.
Since $\wp$ is strictly increasing
on $[0,\infty)$, it has a continuous inverse.
Hence $\varphi(v_k) \to \varphi(v)$.
\end{proof}

For a Banach space $V$ and a complete probability space
$(\Xi, \mathcal{A}, \mathbb{P})$,
$f: V \times \Xi 
\to (-\infty,\infty]$ is said to be \emph{random lower
	semicontinuous} provided $f$ is jointly measurable
(with respect to the tensor-product $\sigma$-algebra of the
Borel $\sigma$-algebra on $V$ and $\mathcal{A}$) and
$f(\cdot, \xi)$  is lower semicontinuous for every $\xi \in \Xi$.
If $\Upsilon_1$ and $\Upsilon_2$ are  metric spaces,
then $G : \Upsilon_1 \times \Xi \to \Upsilon_2$ is a
\emph{\Caratheodory\ mapping}
provided $G(\upsilon, \cdot)$ is measurable for all $v \in \Upsilon_1$ and $G(\cdot, \xi)$ is continuous
for all $\xi \in \Xi$.

Finally, there are many concepts of risk measures in the literature. We will work with the following with further refinements as needed in the text below.
Let	$\rho : \Lp{p} \to (-\infty,\infty]$.
We consider the following conditions on the functional $\rho$.
\begin{enumerate}[nosep,label=(R{\arabic*})]
	\item Convexity. For all $X,Y \in \Lp{p} $ and $\lambda \in (0,1)$, we have $\rho(\lambda X + (1-\lambda) Y) \le \lambda \rho(X) + (1-\lambda)\rho(Y)$.
	\item Monotonicity. For all $X,Y \in \Lp{p} $ such that $X \le Y$ \wpone, we have $\rho(X) \le \rho (Y)$.
	\item Translation equivariance. If $X \in \Lp{p} $ and $C$ is a degenerate random variable with $C = c$ \wpone for some $c \in \mathbb R$, then $\rho(X + C) = \rho(X) + c$.
	\item Positive homogeneity. If $X \in \Lp{p} $ and $\gamma > 0$, then $\rho(\gamma X) = \gamma \rho(X)$.
\end{enumerate}
The risk measure $\rho$ is called \emph{convex}
if it satisfies (R1)--(R3)
and it is referred to
as \emph{coherent} if it satisfies (R1)--(R4), see \cite{PArtzner_FDelbaen_JMEber_DHeath_1999a},
\cite{HFollmer_ASchied_2002a}, and in particular
\cite[p.\ 231]{Shapiro2021}.

\subsection{Epiconvergence and Weak Inf-Compactness}
Variational convergence, in particular (Mosco-)epiconvergence, plays a central role in consistency analysis. We provide here the necessary definitions and results from the literature. In addition, we prove several new results that are tailored to applications involving PDEs with random inputs.

We recall the notions of epiconvergence and Mosco-epiconvergence
\cite{Attouch1989,Dontchev1993}.

\begin{definition}[Epiconvergence]
	Let $V$ be a complete metric space. Let $\phi_k \colon V \to (-\infty,\infty]$
	be a sequence and let $\phi: V \to (-\infty,\infty]$ be a function.
	The sequence $(\phi_k)$ \emph{epiconverges} to $\phi$ if for each
	$v \in V$
	\begin{enumerate}[nosep]
		\item and each $(v_k) \subset V$
		with $v_k \to v$ as $k \to\infty$,
		$\liminf_{k \to \infty} \, \phi_k(v_k) \geq \phi(v)$, and
		\item there exists $(v_k) \subset V$
		with $v_k \to v$ as $k \to\infty$
		such that $\limsup_{k \to \infty} \, \phi_k(v_k) \leq \phi(v)$.
	\end{enumerate}
\end{definition}

In many instances in infinite dimensional optimization, especially the calculus of variations, optimal control, and PDE-constrained optimization we are forced to work with weaker topologies in the context of variational convergence. If the underlying space is a reflexive Banach space, then we may appeal to epiconvergence in the sense of Mosco, which was introduced in \cite{UMosco_1969}.

\begin{definition}[Mosco-epiconvergence]
	Let $V$ be a reflexive Banach
	space and let $ V_0 \subset V$ be a closed, nonempty, convex set.
	Let $\phi_k \colon V_0 \to (-\infty,\infty]$
	be a sequence and let $\phi: V_0 \to (-\infty,\infty]$ be a function.
	The sequence $(\phi_k)$
	\emph{Mosco-epiconverges} to $\phi$ if for each
	$v \in V_0$
	\begin{enumerate}[nosep]
		\item and each  $(v_k) \subset V_0$
		with $v_k \wto v$ as $k \to\infty$,
		$\liminf_{k \to \infty} \, \phi_k(v_k) \geq \phi(v)$, and
		\item there exists $(v_k) \subset V_0$
		with $v_k \to v $
		such that $\limsup_{k \to \infty} \, \phi_k(v_k) \leq \phi(v)$.
	\end{enumerate}
\end{definition}

In the definition of Mosco-epiconvergence, we allow
for the sequence $(\phi_k)$
and the epi-limit $\phi$ to be defined on a nonempty, convex, closed subset of
a reflexive Banach space. This allows us to model constraints without
the need for indicator functions. We will see below in \Cref{lem:thm53}
that this variation on the original definition leaves the crucial implications
of Mosco-epiconvergence intact. In other words,
\Cref{lem:thm53} provides conditions sufficient
for consistency of optimal values of Mosco-epiconvergent objective functions;
compare with \cite[Thm.\ 1.10]{Attouch1984}, \cite[Thm.\ 5.3]{Dong2000},
and \cite[Thm.\ 6.2.8]{Borwein2010}, for example.
\begin{theorem}
	\label{lem:thm53}
	Let $V$ be a reflexive Banach space  and let $V_0 \subset V$
	be a closed, nonempty, convex set.
	Suppose that $\fun_k \colon V_0 \to (-\infty,\infty]$ Mosco-epiconverges to
	$\fun \colon V_0 \to (-\infty,\infty]$.
	Let $(v_{k}) \subset V_0$
	and $(\varepsilon_k) \subset [0,\infty)$
	be sequences such that $\varepsilon_k \to 0^+$
	and for each $k \in \mathbb{N}$,
	let $v_{k}$ satisfy
	\[
	\fun_k(v_{k}) \leq \inf_{v \in V_0}\, \fun_k(v) + \varepsilon_k.
	\]
	If $(v_{k})_K$ is a subsequence
	of $(v_{k})$ such that $v_{k} \wto \bar v$
	as $K \ni k \to\infty$,
	then
	\begin{enumerate}[nosep]
		\item $\bar v \in  V_0$,
		\item $\fun(\bar v) = \inf_{v \in V_0}\, \fun(v)$,
		\item $\inf_{v \in  V_0}\, \fun_k(v) \to \inf_{v \in
			V_0}\, \fun(v)$ as $K \ni k \to\infty$,
		\item $\fun_k(v_{k}) \to \fun(\bar v)$ as $K \ni k \to\infty$.
	\end{enumerate}
\end{theorem}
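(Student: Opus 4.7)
The plan is a standard Mosco-epiconvergence argument, carefully carried out along the weakly convergent subsequence. The four conclusions fall out in the natural order (1)$\to$(2)$\to$(3)$\to$(4), using the Mosco liminf inequality on the minimizing subsequence and the Mosco recovery (limsup) sequence on arbitrary test points in $V_0$.

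First, I would dispatch (1) by invoking Mazur's theorem: $V_0$ is closed and convex, hence weakly sequentially closed, so $(v^k)_K \subset V_0$ with $v^k \wto \bar v$ forces $\bar v \in V_0$. Next, for (2), pick an arbitrary $v \in V_0$ and use the Mosco limsup condition to produce a recovery sequence $(w_k) \subset V_0$ with $w_k \to v$ and $\limsup_k h_k(w_k) \le h(v)$. The $\varepsilon_k$-optimality of $v^k$ gives $h_k(v^k) \le h_k(w_k) + \varepsilon_k$ for every $k \in \mathbb{N}$; taking $\limsup$ along the subsequence $K$ and using $\varepsilon_k \to 0^+$ yields
\[
\limsup_{K \ni k \to \infty}\, h_k(v^k) \;\le\; \limsup_{K \ni k \to \infty}\, h_k(w_k) \;\le\; h(v).
\]
Combined with the Mosco liminf inequality applied to $v^k \wto \bar v$ along $K$, namely $\liminf_{K \ni k \to \infty} h_k(v^k) \ge h(\bar v)$, we obtain $h(\bar v) \le h(v)$ for every $v \in V_0$, proving (2).

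For (3) and (4), specialize the chain of inequalities above to $v = \bar v$: this forces
\[
\liminf_{K \ni k \to \infty}\, h_k(v^k) \;=\; \limsup_{K \ni k \to \infty}\, h_k(v^k) \;=\; h(\bar v),
\]
which is exactly (4). To upgrade this to (3), observe that $\inf_{V_0} h_k \le h_k(v^k) \le \inf_{V_0} h_k + \varepsilon_k$, so the convergence $h_k(v^k) \to h(\bar v)$ along $K$, together with $\varepsilon_k \to 0^+$, implies $\inf_{V_0} h_k \to h(\bar v) = \inf_{V_0} h(v)$ along $K$.

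There is no serious obstacle; the only care needed is bookkeeping on the two index sets. The full sequence $(v^k)$ appears when exploiting $\varepsilon_k$-optimality (which holds for every $k$), while the Mosco liminf inequality is applied only on the subsequence $K$ along which weak convergence is guaranteed. Note also that the formulation of Mosco-epiconvergence over the closed convex set $V_0$ (rather than all of $V$ via indicator functions) is used exactly at the recovery step, where the sequence $(w_k)$ is required to lie in $V_0$ so that $\inf_{V_0} h_k \le h_k(w_k)$; this is why the theorem remains valid in the constrained formulation stated in the excerpt.
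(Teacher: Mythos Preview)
Your proof is correct and follows essentially the same route as the paper's: weak sequential closedness of $V_0$ for (1), then the Mosco liminf inequality on the weakly convergent subsequence sandwiched against the recovery-sequence limsup bound to obtain (2)--(4). The only cosmetic differences are that the paper obtains the bound $\limsup_{k}\inf_{V_0} h_k \le \inf_{V_0} h$ by citing a general epiconvergence fact (Attouch, Props.~1.14 and~2.9) rather than unpacking the recovery sequence directly as you do, and it derives (3) before (4) instead of after.
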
%
\begin{proof}
Since $(v_{k}) \subset V_0$ and $ V_0$
is weakly sequentially closed,  we have $\bar v \in V_0$.
Since $(\fun_k)$ Mosco-epiconverges to $\fun$ on $ V_0$,
it epiconverges to $\fun$, where $V_0$ may be understood
as a complete metric space using the norm topology. Hence
\begin{align}
	\label{eq:limsuphk}
	\limsup_{k \to \infty}\, \inf_{v \in V_0}\, \fun_k(v)
	\leq \inf_{v \in V_0}\, \fun(v);
\end{align}
see, e.g., \cite[Props.\ 1.14 and 2.9]{Attouch1984}.
Then Mosco-epiconvergence ensures
\begin{align*}
	\liminf_{K \ni k \to \infty}\,
	\inf_{v \in V_0}\, \fun_k(v)
	=
	\liminf_{K \ni k \to \infty}\,
	[\varepsilon_k + \inf_{v \in V_0}\, \fun_k(v)]
	\geq
	\liminf_{K \ni k \to \infty}\, \fun_k(v_{k})
	\geq \fun(\bar v).
\end{align*}
Combined with \eqref{eq:limsuphk}, we find that
$\fun(\bar v) = \inf_{v \in V_0}\, \fun(v)$
and $\inf_{v \in V_0}\, \fun_k(v) \to \inf_{v \in V_0}\, \fun(v)$
as $K \ni k \to\infty$.
The assertion $\fun_k(v_{k}) \to \fun(\bar v)$ as $K \ni k \to\infty$
is implied by the above derivations and
\begin{align*}
	\limsup_{K \ni k \to \infty}\, \fun_k(v_{k})
	\leq
	\limsup_{K \ni k \to \infty}\,  [\varepsilon_k+
	\inf_{v \in V_0}\, \fun_k(v)]
	=
	\limsup_{K \ni k \to \infty}\,
	\inf_{v \in V_0}\, \fun_k(v)
	\leq
	\limsup_{k \to \infty}\,
	\inf_{v \in V_0}\, \fun_k(v).
\end{align*}
\end{proof}

\Cref{lem:solutionsetcompact} demonstrates a weak compactness property
of approximate minimizers to ``regularized'' optimization problems
with Mosco-epiconvergent objective functions.
Let $\csp$ be a reflexive Banach space,
let $\adcsp \subset \csp$ be a closed, nonempty, convex set, and
let $f_k$, $f : \adcsp \to (-\infty,\infty]$.
Furthermore, let $\varphi : \csp \to [0,\infty)$ be a convex, continuous
function. 	We define the optimal values
\begin{align}
	\label{eq:epiregularization}
	\mathfrak{m}_k^*  \coloneqq \inf_{z\in \adcsp}\, \{ \, f_k(z)
	+ \varphi(z) \, \}
	\quad \tand \quad
	\mathfrak{m}^* \coloneqq
	\inf_{z\in \adcsp}\, \{ \, f(z)
	+ \varphi(z) \,\}
\end{align}
and the solution sets
\begin{align*}
	\mathcal{S}_k^{\varepsilon_k}  \coloneqq \{\, z \in \adcsp\colon
	f_k(z) + \varphi(z) \leq  \mathfrak{m}_k^* +\varepsilon_k \, \}
	\quad \tand \quad
	\mathcal{S} \coloneqq \{\, z \in \adcsp \colon
	f(z) + \varphi(z) = \mathfrak{m}^* \, \}.
\end{align*}

\begin{proposition}
	\label{lem:solutionsetcompact}
	Let  $\csp$ be a reflexive Banach space,
	let $\adcsp \subset \csp$  be a nonempty, closed,  convex set,
	let $\varphi : \csp \to [0,\infty)$
	be a convex, continuous function,
	and let $\csp_0  \subset \csp$ be  bounded.
	Suppose that $f_k  \colon \adcsp \to (-\infty,\infty]$
	Mosco-epiconverges
	to $f : \adcsp \to (-\infty,\infty]$.
	Let $(\varepsilon_k) \subset [0,\infty)$
	be a sequence with $\varepsilon_k \to 0^+$.
	Suppose that $\mathcal{S} \not= \emptyset$
	and that for all $k \in \mathbb{N}$,
	\begin{align*}
		\mathcal{S}_k^{\varepsilon_k} \subset \csp_0
		\quad \tand \quad
		\mathcal{S}_k^{\varepsilon_k} \not= \emptyset.
	\end{align*}
	If $(z_{k})$ is a sequence with
	$z_{k} \in \mathcal{S}_k^{\varepsilon_k}$
	for all $k \in \mathbb{N}$ and $(z_{k})_K$ is a subsequence
	of $(z_{k})$, then $(z_{k})_K$ has a further subsequence
	$(z_{k})_{K_1}$
	converging \emph{weakly}  to some $\bar  z \in \mathcal{S}$
	and $\varphi(z_{k}) \to \varphi(\bar z)$ as $K_1 \ni k \to\infty$.
\end{proposition}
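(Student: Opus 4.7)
The plan is to apply \Cref{lem:thm53} to the shifted objectives $g_k \coloneqq f_k + \varphi$ and $g \coloneqq f + \varphi$ on $\adcsp$, then separately recover the pointwise convergence of $\varphi$ along the extracted subsequence via a sub-subsequence argument. The key preparatory observations are that $\varphi$, being convex and continuous on a reflexive Banach space, is weakly sequentially lower semicontinuous and everywhere finite, and that $\adcsp$, being closed and convex, is weakly sequentially closed.

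First, since $(z^k)_K \subset \mathcal{S}_k^{\varepsilon_k} \subset \csp_0$ and $\csp_0$ is bounded in the reflexive space $\csp$, a further subsequence $(z^k)_{K_1}$ converges weakly to some $\bar z \in \csp$, and weak sequential closedness of $\adcsp$ places $\bar z \in \adcsp$. Second, I would verify that $(g_k)$ Mosco-epiconverges to $g$ on $\adcsp$: for any $v_k \wto v$,
\[
\liminf_{k \to \infty}\, g_k(v_k) \ge \liminf_{k \to \infty}\, f_k(v_k) + \liminf_{k \to \infty}\, \varphi(v_k) \ge f(v) + \varphi(v) = g(v),
\]
using Mosco-epiconvergence of $(f_k)$ and weak lsc of $\varphi$; for the recovery sequence, take $v_k \to v$ from Mosco-epiconvergence of $(f_k)$ and exploit the norm continuity of $\varphi$.

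Third, the sequence $(z^k)_{K_1}$ consists of $\varepsilon_k$-minimizers of $g_k$ over $\adcsp$ that converge weakly to $\bar z$. Applying \Cref{lem:thm53} to $(g_k,g)$ yields $g(\bar z) = \inf_{z \in \adcsp}\, g(z) = \mathfrak{m}^*$, i.e., $\bar z \in \mathcal{S}$, together with the numerical convergence $g_k(z^k) \to g(\bar z)$ as $K_1 \ni k \to \infty$.

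Fourth, to obtain $\varphi(z^k) \to \varphi(\bar z)$, I would argue by sub-subsequences. Writing $a_k = f_k(z^k)$ and $b_k = \varphi(z^k)$, both $f(\bar z)$ and $\varphi(\bar z)$ are finite, and $a_k + b_k \to f(\bar z) + \varphi(\bar z)$; from the liminf inequalities $\liminf a_k \ge f(\bar z)$ and $\liminf b_k \ge \varphi(\bar z)$, any sub-subsequence along which $(b_k)$ converges (possibly in $[\varphi(\bar z),\infty]$) forces the corresponding sub-subsequence of $(a_k)$ to converge as well, and equality in the sum rules out strict inequality in either term; hence every sub-subsequence admits a further sub-subsequence with $b_k \to \varphi(\bar z)$. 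The main obstacle is precisely this last step: decoupling the joint limit into its constituents. The sub-subsequence device sidesteps the fact that neither $(a_k)$ nor $(b_k)$ is known a priori to converge, and the finiteness of $\varphi(\bar z)$ is what prevents the decomposition from degenerating.
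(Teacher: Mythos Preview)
Your proposal is correct and follows essentially the same strategy as the paper, with a slight organizational difference worth noting. You first establish Mosco-epiconvergence of the sums $g_k = f_k + \varphi$ and then apply \Cref{lem:thm53} to $(g_k,g)$, obtaining the joint convergence $g_k(z^k) \to g(\bar z)$ before decoupling; the paper instead works directly with a recovery sequence $(\tilde z^k)$ for $(f_k)$ at $\bar z$, uses the $\varepsilon_k$-minimality inequality $f_k(z^k)+\varphi(z^k)\le f_k(\tilde z^k)+\varphi(\tilde z^k)+\varepsilon_k$, and passes to the limit to obtain $\limsup_{K_1\ni k}\varphi(z^k)\le \varphi(\bar z)$ without the intermediate statement $g_k(z^k)\to g(\bar z)$. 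Both routes are standard and equivalent in strength.

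One minor simplification: your sub-subsequence device in the fourth step is unnecessary. From $a_k+b_k\to f(\bar z)+\varphi(\bar z)$, $\liminf a_k\ge f(\bar z)$ (finite, since $\bar z\in\mathcal{S}$), and $\liminf b_k\ge \varphi(\bar z)$, the one-line estimate
\[
\limsup_{K_1\ni k}\, b_k \;\le\; \lim_{K_1\ni k}(a_k+b_k) - \liminf_{K_1\ni k}\, a_k \;\le\; \big(f(\bar z)+\varphi(\bar z)\big) - f(\bar z) \;=\; \varphi(\bar z)
\]
already yields $b_k\to\varphi(\bar z)$. This is exactly how the paper closes the argument.
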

\begin{proof}
Since $(z_{k})_K \subset \adcsp$,
$(z_{k})_K \subset \csp_0$,
$\csp_0$ is bounded, and $\adcsp$ is closed and convex,
$(z_{k})_K$ has a further subsequence
$(z_{k})_{K_1}$ such that $z_{k} \wto \bar z \in \adcsp$
as $K_1 \ni k \to\infty$
\cite[Thms.\ 2.23 and 2.28]{Bonnans2000}.
Since $\bar z \in \adcsp$,
the Mosco-epiconvergence of $(f_k)$ to $f$
ensures the
existence of a sequence $(\tilde z_{k}) \subset \adcsp$
such that $\tilde z_{k} \to \bar z \in \adcsp$
as $k \to\infty$
and $\limsup_{k \to \infty} \,f_k(\tilde z_{k}) \leq f(\bar z)$.
Since $\tilde z_{k} \to \bar z$ implies
$\tilde z_{k} \wto \bar z$, we have
$\lim_{k \to \infty} \,f_k(\tilde z_{k}) = f(\bar z)$.
Since 
$z_{k} \in \mathcal{S}_k^{\varepsilon_k}$
and $\tilde z_{k} \in \adcsp$, we have
for all $k \in \mathbb{N}$,
\begin{align}
	\label{eq:fkzkfktildezk}
	f_k(z_{k}) + \varphi(z_{k})
	\leq f_k(\tilde z_{k}) + \varphi(\tilde z_{k})
	+ \varepsilon_k.
\end{align}
Since $(f_k)$ Mosco-epiconverges to $f$,
we have
$
f(\bar z) \leq \liminf_{K_1 \ni k \to \infty}\, f_k(z_{k})
$.
Combined with the fact that $\varphi$ is continuous and
\begin{align*}
	\liminf_{K_1 \ni k \to \infty}\, f_k(z_{k})
	+ \limsup_{K_1 \ni k \to \infty}\, \varphi(z_{k})
	\leq \limsup_{K_1 \ni k \to \infty}\,
	f_k(z_{k}) + \varphi(z_{k}),
\end{align*}
the estimate \eqref{eq:fkzkfktildezk} ensures
\begin{align*}
	f(\bar z) +
	\limsup_{K_1 \ni k \to \infty}\,
	\varphi(z_{k})
	&\leq
	\limsup_{K_1 \ni k \to \infty}\,
	f_k(\tilde z_{k}) + \varphi(\tilde z_{k})
	+ \varepsilon_k
	\leq
	\limsup_{k \to \infty}\,
	f_k(\tilde z_{k}) + \varphi(\tilde z_{k})
	+ \varepsilon_k
	\\
	& =
	\lim_{k \to \infty}\,
	f_k(\tilde z_{k}) + \varphi(\tilde z_{k})
	+ \varepsilon_k
	=
	f(\bar z) + \varphi(\bar z).
\end{align*}
Since $z_{k} \wto \bar z$
as $K_1 \ni k \to\infty$, $\mathcal{S} \neq \emptyset$,
and $(f_k)$ Mosco-epiconverges to $f$,
\Cref{lem:thm53} ensures $\bar z \in \mathcal{S} $.
Since $\bar z \in \mathcal{S}$,
we have $f(\bar z) \in \real$.
Thus
$\limsup_{K_1 \ni k \to \infty} \varphi(z_{k}) \leq
\varphi(\bar z)$.
Since $\varphi$ is convex and continuous,
it is weakly lower semicontinuous.
Combined with
$z_{k} \wto \bar z$ as $K_1 \ni k \to\infty$, we
have $\varphi(z_{k}) \to \varphi(\bar z)$
as $K_1 \ni k \to\infty$.
\end{proof} 

While the sum of an Mosco-epiconvergent sequence
and a convex, continuous function Mosco-epiconverge, \Cref{lem:solutionsetcompact}
allows us to  draw further conclusions about the minimizers to
composite optimization problems defined by sums of  Mosco-epiconvergent
and convex, continuous functions than a direct application of
the ``sum rule.'' For example, if $\varphi$ is an R-function, then the sequence
$(z_{k})_{K_1}$ considered in
\Cref{lem:solutionsetcompact} converges strongly to
an element of $\mathcal{S}$.
\begin{corollary}
	\label{lem:solutionsetstronglycompact}
	If the hypotheses of \Cref{lem:solutionsetcompact} hold true and
	$\varphi$ is an R-function,
	then each subsequence of $(z_{k})$ has a further subsequence
	converging \emph{strongly} to an element of $\mathcal{S}$.
\end{corollary}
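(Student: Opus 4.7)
The plan is to invoke \Cref{lem:solutionsetcompact} to reduce to a weakly convergent subsequence along which the $\varphi$-values converge, and then to read off strong convergence from the defining property of an R-function. Nothing beyond these two ingredients is needed, because the R-function property was tailored precisely to upgrade this kind of joint convergence.

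More concretely, fix an arbitrary subsequence $(z^k)_K$ of $(z^k)$. Since the hypotheses of \Cref{lem:solutionsetcompact} are in force, the proposition yields a further subsequence $(z^k)_{K_1}$ and an element $\bar z \in \mathcal{S}$ such that
\[
z^k \wto \bar z \quad \text{and} \quad \varphi(z^k) \to \varphi(\bar z) \quad \text{as } K_1 \ni k \to\infty.
\]
Both conclusions are exactly what the definition of an R-function requires as input. Since $\varphi \colon \csp \to [0,\infty)$ is an R-function by assumption, it follows that $z^k \to \bar z$ strongly in $\csp$ as $K_1 \ni k \to \infty$, and $\bar z \in \mathcal{S}$ is provided by the proposition itself.

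There is no genuine obstacle here; the corollary is essentially a one-line consequence once \Cref{lem:solutionsetcompact} is in hand. The only point that deserves a brief mention in the write-up is that $\bar z \in \mathcal{S} \subset \adcsp \subset \csp$, so the R-function property of $\varphi$ on all of $\csp$ is applicable at the limit point, and the strong limit necessarily coincides with the weak limit $\bar z$, which lies in $\mathcal{S}$.
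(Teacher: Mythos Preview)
Your proposal is correct and matches the paper's approach exactly: the paper does not even give a separate proof for this corollary, treating it as an immediate consequence of \Cref{lem:solutionsetcompact} together with the definition of an R-function. Your write-up spells out precisely this one-line argument.
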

\begin{remark}
	\normalfont
	If $\adcsp$ is bounded, then we can choose $\csp_0  = \adcsp$
	in \Cref{lem:solutionsetcompact}.
	The condition $\mathcal{S}_k^{\varepsilon_k} \subset \csp_0$
	for all $k \in \natural$
	in \Cref{lem:solutionsetcompact} is related to a
	``weak 	inf-compactness'' condition,
	provided that $\csp_0$ is also convex and bounded.
	In this case, $\csp_0$ is weakly (sequentially) compact.
	Instead of requiring
	$\mathcal{S}_k^{\varepsilon_k} \subset \csp_0$ 	for all $k \in \natural$,
	we could require
	for some $\gamma \in \real$
	and for all $k \in \mathbb{N}$,
	\begin{align}
		\label{eq:levelset}
		\emptyset \neq \{\, z \in \adcsp\colon \,
		f_k(z) + \varphi(z) \leq \gamma
		\,\}
		\subset \csp_0.
	\end{align}
	The level set condition \eqref{eq:levelset}
	ensures that $\mathcal{S}_k^{\varepsilon_k}$ is nonempty, provided
	that $f_k$ is weakly lower semicontinuous.
	In case that $\csp_0$ is norm compact,
	the condition \eqref{eq:levelset} has been used, for example,
	in Theorem~2.1 in \cite{Lachout2005} to establish
	consistency properties for infinite dimensional stochastic programs.
	If $\sup_{k\in \mathbb{N}}\, \mathfrak{m}_k^* < \infty$,
	$\gamma > \sup_{k\in \mathbb{N}}\, \mathfrak{m}_k^*$ and
	for all $k \in \mathbb{N}$,
	\begin{align*}
		\{\, z \in \adcsp \colon \,
		f_k(z) + \varphi(z) \leq \gamma
		\,\}
		\subset \csp_0,
	\end{align*}
	then $\mathcal{S}_k^{\varepsilon_k} \subset \csp_0$
	for all sufficiently large $k \in \natural$
	since we eventually have
	$\sup_{k\in \mathbb{N}}\, \mathfrak{m}_k^* + \varepsilon_k \leq \gamma$.
\end{remark}

\begin{corollary}
	\label{cor:deterministicconsistency}
	If the hypotheses of \Cref{lem:solutionsetcompact} hold,
	then
	$\mathfrak{m}_k^* \to \mathfrak{m}^*$
	as $k \to\infty$.
	If furthermore $\varphi$ is an R-function,
	then
	$\deviation{\mathcal{S}_k^{\varepsilon_k}}{\mathcal{S}} \to 0$
	as $k \to\infty$.
\end{corollary}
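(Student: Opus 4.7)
The plan is to reduce both conclusions to the subsequential statements already proved: \Cref{lem:solutionsetcompact} for the weak-convergence version and \Cref{lem:solutionsetstronglycompact} for the strong version. The bridge from subsequential convergence to convergence of the full sequence will be the standard ``every subsequence has a further subsequence'' argument.

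First, to prove $\mathfrak{m}_k^* \to \mathfrak{m}^*$, I would select an approximate minimizer $z^k \in \mathcal{S}_k^{\varepsilon_k}$ for each $k$ (which exists by hypothesis). Along an arbitrary subsequence, \Cref{lem:solutionsetcompact} furnishes a further subsequence along which $z^k \wto \bar z \in \mathcal{S}$ and $\varphi(z^k) \to \varphi(\bar z)$. The Mosco-epi lower bound yields $\liminf f_k(z^k) \geq f(\bar z)$, while the recovery sequence for $\bar z$ together with $z^k \in \mathcal{S}_k^{\varepsilon_k}$ yields $\limsup [f_k(z^k) + \varphi(z^k)] \leq f(\bar z) + \varphi(\bar z) = \mathfrak{m}^*$. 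The sandwich $\mathfrak{m}_k^* \leq f_k(z^k) + \varphi(z^k) \leq \mathfrak{m}_k^* + \varepsilon_k$ together with $\varepsilon_k \to 0^+$ then forces $\mathfrak{m}_k^* \to \mathfrak{m}^*$ along this further subsequence. Because the initial subsequence was arbitrary, the full sequence of optimal values converges to $\mathfrak{m}^*$.

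For the deviation statement, I would argue by contradiction. Suppose $\deviation{\mathcal{S}_k^{\varepsilon_k}}{\mathcal{S}}$ does not tend to zero; then some $\delta > 0$ and an infinite index set $K$ provide, for every $k \in K$, a point $z^k \in \mathcal{S}_k^{\varepsilon_k}$ with $\dist{z^k}{\mathcal{S}} > \delta$. Applying \Cref{lem:solutionsetstronglycompact} along $K$ (whose hypotheses transfer verbatim from those of \Cref{lem:solutionsetcompact} together with $\varphi$ being an R-function) extracts a further subsequence $K_1$ with $z^k \to \bar z \in \mathcal{S}$ in norm, which immediately contradicts $\dist{z^k}{\mathcal{S}} > \delta$ for $k \in K_1$.

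The main work has in fact been done inside \Cref{lem:solutionsetcompact}: securing the identity $\varphi(z^k) \to \varphi(\bar z)$ that upgrades weak lower semicontinuity to equality and thereby closes the sandwich used above. Given that proposition together with the R-function refinement in \Cref{lem:solutionsetstronglycompact}, the remainder of the corollary is essentially a subsequence-principle bookkeeping exercise; the only point to watch is running the selection argument along a \emph{generic} initial subsequence so that the conclusion transfers to the entire sequence.
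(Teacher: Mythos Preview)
Your proposal is correct and follows essentially the same approach as the paper: both parts are reduced to the subsequence principle together with \Cref{lem:solutionsetcompact} and \Cref{lem:solutionsetstronglycompact}. The only cosmetic differences are that the paper invokes \Cref{lem:thm53} (applied to $h_k = f_k + \varphi$, using that a convex continuous $\varphi$ preserves Mosco-epiconvergence) to obtain $\mathfrak{m}_k^* \to \mathfrak{m}^*$ along the extracted subsequence, whereas you re-derive this directly via the sandwich; and for the deviation the paper picks near-maximizers $\tilde z^k$ with $\varrho_k \leq \dist{\tilde z^k}{\mathcal{S}} + 1/k$ and runs the subsequence argument directly, while you phrase the same extraction as a proof by contradiction.
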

\begin{proof}
Let	$z_{k} \in \mathcal{S}_k^{\varepsilon_k}$ for each
$k \in \mathbb{N}$.
The hypotheses ensure that
$(f_k)$ Mosco-epiconverges to $f$.
Let $(\mathfrak{m}_k^*)_K$ be a subsequence of
$(\mathfrak{m}_k^*)$.
\Cref{lem:solutionsetcompact}
ensures that $(z_{k})_K$ has a further subsequence
$(z_{k})_{K_1}$ that weakly converges to some element in
$\mathcal{S}$.
Combined with \Cref{lem:thm53}, we
find that $\mathfrak{m}_k^* \to \mathfrak{m}^*$
as $K_1 \ni k \to\infty$.
Since  $\mathcal{S}$ is nonempty,
$\mathfrak{m}^* \in \real$.
Putting together the pieces, we have shown that
each subsequence of $(\mathfrak{m}_k^*)$ has a further subsequence
converging to $\mathfrak{m}^*$. Hence $\mathfrak{m}_k^* \to \mathfrak{m}^*$
as $k \to\infty$.

It must still be shown that
$\deviation{\mathcal{S}_k^{\varepsilon_k}}{\mathcal{S}} \to 0$
as $k \to\infty$.
Since $\mathcal{S}_k^{\varepsilon_k} \subset \csp_0$
and
$\mathcal{S} \subset \csp$
are nonempty, and $\csp_0$ is bounded, we have
$\deviation{\mathcal{S}_k^{\varepsilon_k}}{\mathcal{S}}
\leq \deviation{\csp_0}{\mathcal{S}} <\infty
$
for all $k \in \mathbb{N}$.
Let us define the sequence
$\varrho_k \coloneqq
\deviation{\mathcal{S}_k^{\varepsilon_k}}{\mathcal{S}}$.
Let $(\varrho_k)_K$ be a subsequence of $(\varrho_k)$.
We have just shown that
$\varrho_k \leq \deviation{\csp_0}{\mathcal{S}} <\infty$.
Moreover $\varrho_k \geq 0$.
Using the definition of the deviation,
we find that there exists
for each $k \in \mathbb{N}$,
$\tilde z_{k} \in \mathcal{S}_k^{\varepsilon_k}$
such that $\varrho_k \leq \dist{\tilde z_{k}}{\mathcal{S}} + 1/k$.
\Cref{lem:solutionsetstronglycompact}
ensures that $(\tilde z_{k})_K$ has a further subsequence
$(\tilde z_{k})_{K_1}$ that
strongly converges to some $\bar z \in \mathcal{S}$.
Since $\mathcal{S}$ is nonempty, $\dist{\cdot}{\mathcal{S}}$
is (Lipschitz) continuous \cite[Thm.\ 3.16]{Aliprantis2006}.
It follows that
$\dist{\tilde z_{k}}{\mathcal{S}} \to 0$ as $K_1 \ni k \to\infty$.
Hence
$\varrho_k \to 0$ as $K_1 \ni k \to\infty$.
Since each subsequence of $(\varrho_k)$ has a further subsequence
converging to zero, $\varrho_k \to 0$ as $k \to\infty$.
\end{proof} 

\Cref{prop:moscoepiconvergence} demonstrates that epiconvergence
can imply Mosco-epiconvergence. 
This result is particularly relevant for
PDE-constrained problems. 
If $V$ is a Banach space and $Y$ is a complete metric space,
we refer to a mapping $G : V \to Y$ as \emph{completely continuous}
if $(v_{k}) \subset V$ and $v_{k} \wto v \in V$ implies
$G(v_{k}) \to G(v)$.

\begin{proposition}
	\label{prop:moscoepiconvergence}
	Let $\csp_0 \subset \csp$ be a nonempty, closed, convex subset
	of a  reflexive Banach space $\csp$  and
	let $Y_0 \subset Y$ be a closed subset of a Banach space $Y$.
	Suppose that $\B : \csp \to Y$ is linear and completely continuous
	with $\B(\csp_0) \subset Y_0$.
	If $\fun_k: Y_0 \to (-\infty,\infty]$ epiconverges to
	$\fun: Y_0 \to (-\infty,\infty]$
	and
	$\fun_k \circ \B \colon \csp_0 \to (-\infty,\infty]$
	epiconverges to
	$\fun \circ \B \colon \csp_0 \to (-\infty,\infty]$,
	then
	$\fun_k \circ \B \colon \csp_0 \to (-\infty,\infty]$ Mosco-epiconverges to
	$\fun \circ \B \colon \csp_0\to (-\infty,\infty]$.
\end{proposition}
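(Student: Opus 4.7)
The plan is to verify the two defining conditions of Mosco-epiconvergence directly from the hypotheses, exploiting that complete continuity converts weak convergence in $\csp$ to strong convergence in $Y$.

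First, for the limsup (recovery-sequence) condition, let $v \in \csp_0$. Because $\fun_k \circ \B : \csp_0 \to (-\infty,\infty]$ is assumed to epiconverge to $\fun \circ \B$ on $\csp_0$, and $\csp_0$ with the norm topology is a complete metric space (as a closed subset of the Banach space $\csp$), the ordinary definition of epiconvergence supplies a sequence $(v_k) \subset \csp_0$ with $v_k \to v$ such that $\limsup_{k \to \infty} (\fun_k \circ \B)(v_k) \le (\fun \circ \B)(v)$. This is precisely the required strong recovery sequence for Mosco-epiconvergence.

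Second, for the liminf condition, let $v \in \csp_0$ and let $(v_k) \subset \csp_0$ be a sequence with $v_k \wto v$ as $k \to \infty$. Because $\B$ is linear and completely continuous, it follows that $\B v_k \to \B v$ in $Y$. Since $\B(\csp_0) \subset Y_0$, we have $\B v_k, \B v \in Y_0$, and $Y_0$ is a complete metric space with its norm topology (as a closed subset of the Banach space $Y$). Invoking the liminf part of the epiconvergence of $\fun_k$ to $\fun$ on $Y_0$ for the strongly convergent sequence $\B v_k \to \B v$, we obtain
\begin{equation*}
\liminf_{k \to \infty} (\fun_k \circ \B)(v_k)
=
\liminf_{k \to \infty} \fun_k(\B v_k)
\ge
\fun(\B v)
=
(\fun \circ \B)(v),
\end{equation*}
which is the required weak liminf inequality.

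Combining the two conditions yields the Mosco-epiconvergence of $(\fun_k \circ \B)$ to $\fun \circ \B$ on $\csp_0$. There is no serious obstacle: the role of the hypothesis that $\fun_k \circ \B$ epiconverges to $\fun \circ \B$ is solely to provide the strong recovery sequence in $\csp_0$ (which is not produced directly by epiconvergence of $\fun_k$ on $Y_0$, because the image $\B(\csp_0)$ need not cover $Y_0$ and a recovery sequence $y_k \to \B v$ in $Y_0$ need not be of the form $\B v_k$ for any $v_k \in \csp_0$), while the real work on the liminf side is done by complete continuity of $\B$, which upgrades weak-to-strong so that ordinary epiconvergence of $\fun_k$ on $Y_0$ can be applied without modification.
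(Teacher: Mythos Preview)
Your proof is correct and follows essentially the same approach as the paper: the liminf condition is handled by using complete continuity of $\B$ to convert weak convergence in $\csp_0$ into strong convergence in $Y_0$ and then invoking epiconvergence of $\fun_k$ on $Y_0$, while the limsup (recovery-sequence) condition comes directly from the assumed epiconvergence of $\fun_k\circ\B$ on $\csp_0$. Your additional explanation of why the second epiconvergence hypothesis is needed is a helpful elaboration but not a departure from the paper's argument.
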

\begin{proof}
Fix $\bar z \in \csp_0$.
Let $(z_{k}) \subset \csp_0$ be a sequence
with $z_{k} \wto \bar z$.
We have
$\B z_{k}$, $\B \bar z \in Y_0$.
The complete continuity of $\B$ yields
$\B z_{k} \to \B  \bar z$ as $ k \to\infty$.
Since $(\fun_k)$ epiconverges to $\fun$,
$\liminf_{k \to \infty}\, \fun_k(\B z_{k}) \geq \fun(\B \bar z)$.
The hypotheses ensure that $\fun_k \circ \B$
epiconverges to $\fun \circ \B$.
Putting together the pieces, we conclude that
$(\fun_k \circ \B)$ Mosco-epiconverges to 
$\fun  \circ \B$.
\end{proof}

For the applications considered in
\Cref{subsect:ocpspde,subsect:ocpsvi}, $\B$ is the adjoint operator
of a compact (Sobolev) embedding operator, and hence linear and completely
continuous.

\section{Consistency of Empirical Approximations}\label{sec:consist}

We consider the potentially infinite dimensional risk-averse stochastic
program
\begin{equation}\label{eq:gen-risk-op}
	\min_{z \in Z_{\rm ad}} \risk[F(\B z)] + \wp(z),
\end{equation}
where
\begin{align}
	\label{eq:def:Fyomega}
	F(y)(\omega) \coloneqq \pobj(y,\xi(\omega)),
\end{align}
the set $Y_0$ is a closed subset of a separable
Banach space $Y$, $f : Y_0 \times \Xi \to \real$ is a \Caratheodory\ function,
and $\B : \csp \to Y$ is a linear, continuous operator.
Moreover $\xi \colon \Omega \to \Xi$ is a random element
with law  $\mathbb{P} = P \circ \xi^{-1}$,
$f(y, \cdot) \in L^p(\Xi, \cA, \mathbb{P})$, 
and $\mathcal{R} \colon L^p(\Omega, \cF, P) \to \real$
with $1 \leq p  < \infty$ is a law invariant convex risk measure.

We introduce the empirical approximation of \eqref{eq:gen-risk-op}
for the case when $(\Omega, \cF, P)$ is nonatomic.
Let $\xi^1, \xi^2, \ldots$ be defined on 
a complete probability space $(\Omega^\prime,\cF^\prime,P^\prime)$
and assume the sequence is composed of
independent identically distributed $\Xi$-valued random elements
each with law $\mathbb{P} = P \circ \xi^{-1}$.
For $y \in Y_0$ and $N \in \mathbb{N}$, the 
empirical distribution function $\hat{H}_{y,N}(\cdot; \omega^\prime)$ 
of the sample $\pobj(y, \xi^1(\omega^\prime)), \ldots, \pobj(y, \xi^N(\omega^\prime))$
is defined by 
$$\hat{H}_{y,N}(t; \omega^\prime) \coloneqq 
\frac{1}{N} \sum_{i=1}^N \mathbf{1}_{(-\infty, t]}(\pobj(y, \xi^i(\omega^\prime))),$$ 
where $\omega^\prime \in \Omega^\prime$
and $\mathbf{1}_{(-\infty, t]}$ 
is the indicator function of the interval $(-\infty, t]$.
We denote by $\hat{H}_{y,N}^{-1}(\cdot; \omega^\prime)$
the quantile function of $\hat{H}_{y,N}(\cdot; \omega^\prime)$.
We often omit writing the second arguments of $\hat{H}_{y,N}$
and $\hat{H}_{y,N}^{-1}$.
The empirical approximation of \eqref{eq:gen-risk-op} is given by
\begin{equation}\label{eq:emp-risk-op}
	\min_{z \in Z_{\rm ad}} \risk[\hat{H}_{\B z,N}]
	+ \wp(z).
\end{equation}
Recall from our discussion on law invariant risk measures
that $\risk[\hat{H}_{\B z,N}]$ means the risk measure $\risk$
does not distinguish between $z,N$-dependent random variables with distribution
functions equivalent to $\hat{H}_{\B z,N}$.

Our consistency analysis is based on the conditions in 
\Cref{assumption:strongconsistency}. 

\begin{assumption}
	\label{assumption:strongconsistency}
	\normalfont
	\begin{enumerate}[nosep]
		\item The space $\csp$ is a separable, reflexive Banach space,
		$\adcsp \subset \csp$ is nonempty, closed, convex and bounded. 
		The space $Y$ is a separable Banach space,
		and $Y_0 \subset Y$ is closed,
		and $p \in [1,\infty)$.
		\item The mapping $\B : \csp \to Y$ is linear and completely
		continuous
		and $\B(\adcsp) \subset Y_0$.
		\item  The function
		$\wp \colon \csp \to [0,\infty)$ is convex and continuous.
		\item The function $\pobj : Y_0 \times \Xi \to \real$
		is a \Caratheodory\ function.
		\item For all $y \in Y_0$,
		$\pobj(y, \cdot) \in L^p(\Xi, \mathcal{A}, \mathbb{P})$
		and $F : Y_0 \to L^p(\Omega,\cF, P)$
		defined in \eqref{eq:def:Fyomega}
		is continuous.
		\item For each $\bar y \in Y_0$, there exists a neighborhood
		$\mathcal{Y}_{\bar y} \subset Y_0$ of $\bar y$
		and a random variable $h \in L^p(\Xi, \mathcal{A}, \mathbb{P})$
		such that $\pobj(y, \cdot) \geq h(\cdot)$
		for all $y \in \mathcal{V}_{\bar y}$.
	\end{enumerate}
\end{assumption}

Let $m^*$ be the optimal value of problem \eqref{eq:gen-risk-op}
and let $\mathscr{S}$ be its solution set.
Furthermore, let $\hat{m}_N^*$ be the optimal value of \eqref{eq:emp-risk-op}
and let $\hat{\mathscr{S}}_N^r$ be its set of $r$-minimizers,
where $r \geq 0$.
The ``with probability one''-statements in 
\Cref{thm:consistency} are with respect to $P^\prime$.

\begin{theorem}
	\label{thm:consistency}
	Let \Cref{assumption:strongconsistency} hold.
	Suppose further that $(\Omega,\cF, P)$ is nonatomic and complete.
	Let  $\Risk{} : L^p(\Omega,\cF, P) \to \real$ be
	a convex, law invariant  risk measure.
	If $(r_N) \subset [0,\infty)$ is a deterministic sequence
	such that $r_N \to 0$ as $N \to\infty$,
	then $\hat{m}_N^* \to m^*$
	\wpone as $N \to\infty$.
	If furthermore $\wp$ is an R-function, then
	$\deviation{\hat{\mathscr{S}}_N^{r_N}}{\mathscr{S}} \to 0$
	\wpone as $N \to\infty$.
\end{theorem}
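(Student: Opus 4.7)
The plan is to apply \Cref{cor:deterministicconsistency} pathwise, reducing the stochastic statement to Mosco-epiconvergence with probability one of the empirical objective $\pobjcal_N(z) := \risk[\hat H_{\B z,N}]$ to its population counterpart $\pobjcal(z) := \risk[F(\B z)]$ on $\adcsp$. Because $\adcsp$ is bounded, taking $\csp_0 = \adcsp$ satisfies the compactness hypothesis $\mathcal{S}_k^{\varepsilon_k} \subset \csp_0$ of \Cref{lem:solutionsetcompact} for free. Nonemptiness of $\mathscr{S}$ and $\hat{\mathscr{S}}_N^{r_N}$ will follow from the weak compactness of $\adcsp$ combined with weak continuity of $z \mapsto \risk[F(\B z)]$ and $z \mapsto \risk[\hat H_{\B z, N}]$, which is an automatic consequence of the complete continuity of $\B$, the continuity of $F \colon Y_0 \to \Lp{p}$, and the continuity of a real-valued convex risk measure on $\Lp{p}$.

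To establish the Mosco-epiconvergence itself, I would invoke \Cref{prop:moscoepiconvergence}: since $\B$ is linear and completely continuous with $\B(\adcsp) \subset Y_0$, it suffices to show (i) that $f_N(y) := \risk[\hat H_{y,N}]$ epiconverges to $f(y) := \risk[F(y)]$ on $Y_0$ w.p.1, and (ii) that $f_N \circ \B$ epiconverges to $f \circ \B$ on $\adcsp$ w.p.1. Both reduce to uniform almost-sure convergence of $f_N$ to $f$ on the set $K := \overline{\B(\adcsp)}$, which is norm compact in $Y_0$ because $\adcsp$ is weakly sequentially compact in the reflexive space $\csp$ and $\B$ is completely continuous. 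Given uniform a.s.\ convergence on $K$, the liminf direction follows from continuity of $f$ on $K$ together with the fact that $\B z^N \to \B \bar z$ in $K$ whenever $z^N \wto \bar z$ in $\adcsp$; the recovery sequence can be taken as the constant $z^N \equiv \bar z$, which reduces to pointwise a.s.\ convergence $f_N(\B \bar z) \to f(\B \bar z)$ at a single point.

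The hard part is this uniform almost-sure convergence on $K$. I would decompose it into two ingredients. First, a pointwise strong law of large numbers for convex, law-invariant risk measures on $\Lp{p}$: nonatomicity of $(\Omega,\cF,P)$ permits realizing $\hat H_{y,N}$ as the distribution of a concrete $\Lp{p}$ random variable, and then law invariance together with continuity of $\risk$ yields $f_N(y) \to f(y)$ w.p.1 for each fixed $y$. Second, almost-sure equicontinuity of $\{f_N\}$ on $K$: continuity of $\pobj(\cdot,\xi)$, the local $\Lp{p}$-domination assumption, and continuity of $F \colon Y_0 \to \Lp{p}$ together yield uniform $\Lp{p}$-integrability of $\{\pobj(y,\xi) : y \in K\}$; combined with continuity of $\risk$ on $\Lp{p}$, this controls $|f_N(y^1) - f_N(y^2)|$ uniformly in $N$ on a set of full probability by a modulus of continuity in $\|y^1 - y^2\|$. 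A standard separability argument (countable dense subset of $K$ plus equicontinuity) then upgrades pointwise a.s.\ convergence to uniform a.s.\ convergence on $K$. The chief obstacle is precisely this equicontinuity step: \Cref{assumption:strongconsistency} item 4 grants only Carathéodory regularity, not pointwise Lipschitz control of $\pobj$ in $y$, so one must pass through $F$ as an $\Lp{p}$-valued continuous mapping to obtain the uniform integrability that powers the uniform SLLN. Once uniform a.s.\ convergence on $K$ is secured, \Cref{prop:moscoepiconvergence} and \Cref{cor:deterministicconsistency} conclude the proof: $\hat m_N^* \to m^*$ \wpone, and, when $\wp$ is an R-function, $\deviation{\hat{\mathscr{S}}_N^{r_N}}{\mathscr{S}} \to 0$ \wpone.
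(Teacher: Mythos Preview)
Your high-level scaffold mirrors the paper's: both invoke \Cref{cor:deterministicconsistency} pathwise with $\csp_0=\adcsp$, and both pass from epiconvergence on $Y_0$ to Mosco-epiconvergence on $\adcsp$ via the completely continuous map $\B$ (\Cref{prop:moscoepiconvergence}). One item you omit is measurability of $\hat m_N^*$ and of $\deviation{\hat{\mathscr{S}}_N^{r_N}}{\mathscr{S}}$, which the paper treats separately (\Cref{prop:emp-Rylsc}, \Cref{lem:emp-risk-op-has-solution}) and which is needed for the ``\wpone'' statements to be meaningful.

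The substantive divergence, and the gap, is in how you obtain epiconvergence of $f_N(y)=\risk[\hat H_{y,N}]$ to $f(y)=\risk[F(y)]$. You aim for \emph{uniform} a.s.\ convergence on $K=\overline{\B(\adcsp)}$ via almost-sure equicontinuity of $\{f_N\}$, and you propose to extract that equicontinuity from continuity of $F\colon Y_0\to\Lp{p}$ and the resulting uniform $\Lp{p}$-integrability of $\{\pobj(y,\xi):y\in K\}$. But those are population-level statements: uniform continuity of $F$ on $K$ controls $\sup_{\|y^1-y^2\|\le\delta}\|F(y^1)-F(y^2)\|_{\Lp{p}}$, whereas sample-path equicontinuity of $f_N$ requires control of $\sup_{\|y^1-y^2\|\le\delta}\tfrac{1}{N}\sum_i|\pobj(y^1,\xi^i)-\pobj(y^2,\xi^i)|^p$ uniformly in $N$. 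The supremum sits \emph{inside} the average in the latter and \emph{outside} the expectation in the former, and \Cref{assumption:strongconsistency} supplies only a one-sided lower bound (item~6), no integrable upper envelope; hence $\sup_{y\in K}|\pobj(y,\xi)|$ need not lie in $\Lp{p}$, and the uniform-LLN machinery is not available under the stated hypotheses. The paper sidesteps this by proving epiconvergence directly via an epigraphical law of large numbers for law-invariant risk measures (\Cref{thm:lln31}, extending Shapiro~2013): the $\liminf$ direction uses the local infima $\pi_{v,r}(\omega)=\inf_{v'\in B(v,r)}\pobj(v',\zeta(\omega))$, which lie in $\Lp{p}$ precisely because of the one-sided bound, together with monotonicity of $\risk$; the $\limsup$ direction needs only pointwise convergence on a countable dense set plus a diagonalization. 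This is strictly weaker than uniform convergence and is tailored to the one-sided, lower-semicontinuous structure you are given.
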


To establish \Cref{thm:consistency},
we verify the hypotheses of
\Cref{cor:deterministicconsistency,cor:lln31} (in the Appendix).
\begin{lemma}
	\label{prop:RBzwlsc}
	If \Cref{assumption:strongconsistency} holds and
	$\Risk{} : L^p(\Omega,\cF, P) \to \real$  is a convex risk measure,
	then $\adcsp \ni z \mapsto \risk[F(\B z)]$ is
	completely continuous.
\end{lemma}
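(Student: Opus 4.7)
The target is to show that if $(z^k) \subset \adcsp$ satisfies $z^k \wto z$ in $\csp$, then $\risk[F(\B z^k)] \to \risk[F(\B z)]$ in $\real$. My plan is to exploit the composition structure $z \xrightarrow{\B} y \xrightarrow{F} L^p(\Omega,\cF,P) \xrightarrow{\risk} \real$ and transport weak convergence to norm convergence through $\B$, then carry everything through the continuity of $F$ and $\risk$.

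First, since $\adcsp$ is closed and convex, it is weakly sequentially closed, so from $(z^k) \subset \adcsp$ with $z^k \wto z$ I conclude $z \in \adcsp$. Complete continuity of $\B$ then gives $\B z^k \to \B z$ strongly in $Y$, and $\B(\adcsp) \subset Y_0$ ensures both $\B z^k$ and $\B z$ lie in the domain $Y_0$ of the integrand map $F$. Item (5) of \Cref{assumption:strongconsistency} yields continuity of $F : Y_0 \to L^p(\Omega,\cF,P)$, so $F(\B z^k) \to F(\B z)$ in $L^p(\Omega,\cF, P)$.

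The remaining and essential step is the norm continuity of $\risk$ on $L^p(\Omega,\cF,P)$. Here $\risk$ is assumed convex in the sense (R1)--(R3): convex, monotone, and translation equivariant, and takes values in $\real$ (i.e.\ is finite everywhere). The extended Namioka--Klee theorem (see, e.g., Ruszczy\'nski--Shapiro or Biagini--Frittelli) then asserts that any finite-valued, convex, monotone functional on $L^p$ with $p \in [1,\infty)$ is automatically norm continuous, indeed locally Lipschitz. I would state this as a citation and apply it to conclude $\risk[F(\B z^k)] \to \risk[F(\B z)]$.

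The main obstacle is precisely that continuity step: it is not obtained for free from convexity on an infinite-dimensional space (convex functions on $L^p$ need not be continuous), and it is crucial that $\risk$ is real-valued and monotone on the Banach lattice $L^p$. Law invariance and nonatomicity of $(\Omega,\cF,P)$ play no role here; they will be needed later when invoking the law of large numbers for law-invariant risk measures. Once the three-step chain ``weak $\to$ strong in $Y$, strong $\to$ strong in $L^p$, strong $\to$ convergence in $\real$'' is in place, the assertion that $z \mapsto \risk[F(\B z)]$ is completely continuous on $\adcsp$ follows immediately.
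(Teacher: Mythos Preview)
Your proof is correct and follows essentially the same route as the paper: the paper notes that a real-valued convex risk measure on $L^p$ is continuous (citing \cite[Cor.~3.1]{Ruszczynski2006b}, which is the Ruszczy\'nski--Shapiro result you invoke), composes this with the continuity of $F$ from item~(5) of \Cref{assumption:strongconsistency}, and then uses the complete continuity of $\B$. Your extra step verifying that the weak limit stays in $\adcsp$ so that $\B z \in Y_0$ is a detail the paper glosses over.
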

\begin{proof}
Since $\Risk{}$ is a finite-valued convex risk measure,
it is continuous \cite[Cor.\ 3.1]{Ruszczynski2006b}.
\Cref{assumption:strongconsistency} ensures the continuity
of $y \mapsto F(y)$. Hence $ y \mapsto \risk[F(y)]$
is continuous.
Now the complete continuity of $\B$ implies that of
$z \mapsto \risk[F(\B z)]$.
\end{proof} 

We recall from our discussion on law invariant risk measures
in  \Cref{sec:prelim}, the identity 
$\risk[\hat{H}_{y,N}(\omega^\prime)]=\risk[\hat{H}_{y,N}^{-1}(G(\cdot);\omega^\prime)]$,
where $G \colon \Omega \to [0,1]$ is a random variable with uniform
distribution $\nu$.

\begin{lemma}
	\label{prop:emp-Rylsc}
	Let \Cref{assumption:strongconsistency} hold.
	Suppose further that $(\Omega,\cF, P)$ is nonatomic and complete.
	Let  $\Risk{} : L^p(\Omega,\cF, P) \to \real$ be a
	convex, law invariant risk measure.
	Then 
	$Y_{0} \times \Omega^\prime \ni (y,\omega^\prime) \mapsto
	\risk[\hat{H}_{y,N}^{-1}(G(\cdot); \omega^\prime)]$
	is a \Caratheodory\ function.
\end{lemma}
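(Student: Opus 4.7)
The plan is to reduce the claim to showing that $\risk[\hat H_{y,N,\omega}]$ admits a representation of the form $\tilde\rho(\pobj(y,\xi^1(\omega)),\ldots,\pobj(y,\xi^N(\omega)))$ for a single continuous function $\tilde\rho : \real^N \to \real$ that does not depend on $(y,\omega)$. Once such a representation is in hand, both \Caratheodory\ conditions follow immediately from the \Caratheodory\ property of $\pobj$ together with the measurability of the random elements $\xi^i$.

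To construct $\tilde\rho$, I would use the nonatomicity of $(\Omega,\cF,P)$ to select a measurable partition $A_1,\ldots,A_N \subset \Omega$ with $P(A_i) = 1/N$, and define the bounded linear map $\Phi : \real^N \to L^p(\Omega,\cF,P)$ by $\Phi(a) = \sum_{i=1}^N a_i \mathbf{1}_{A_i}$; its continuity follows from the identity $\|\Phi(a)\|_{L^p}^p = N^{-1}\sum_i |a_i|^p$. For every $(y,\omega)$, the random variable $\Phi(\pobj(y,\xi^1(\omega)),\ldots,\pobj(y,\xi^N(\omega)))$ takes each value $\pobj(y,\xi^i(\omega))$ on a set of probability $1/N$, so its distribution function coincides with $\hat H_{y,N,\omega}$. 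Setting $\tilde\rho(a) \coloneqq \risk(\Phi(a))$, law invariance of $\risk$ then yields
$$\risk[\hat H_{y,N,\omega}] = \tilde\rho\bigl(\pobj(y,\xi^1(\omega)),\ldots,\pobj(y,\xi^N(\omega))\bigr).$$
The function $\tilde\rho$ is continuous because $\Phi$ is continuous and $\risk$ is continuous on $L^p(\Omega,\cF,P)$, the latter by convexity and finite-valuedness exactly as invoked in the proof of \Cref{prop:RBzwlsc}.

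With this representation the \Caratheodory\ property is immediate. For each fixed $\omega$, the \Caratheodory\ hypothesis on $\pobj$ makes every $y \mapsto \pobj(y,\xi^i(\omega))$ continuous, so the composition $y \mapsto \tilde\rho(\pobj(y,\xi^1(\omega)),\ldots,\pobj(y,\xi^N(\omega)))$ is continuous. For each fixed $y$, each $\omega \mapsto \pobj(y,\xi^i(\omega))$ is $\cF$-measurable as the composition of the $\cA$-measurable function $\pobj(y,\cdot)$ with the $\cF/\cA$-measurable random element $\xi^i$, and composing the resulting measurable $\real^N$-valued map with continuous $\tilde\rho$ preserves measurability.

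The main technical point is the passage to a single \emph{representative} random variable for the empirical law: nonatomicity provides a partition realizing any $N$-atom distribution with equal weights, and law invariance guarantees that $\tilde\rho$ is well defined and independent of the partition chosen. Once these two ingredients are combined, the remainder is a straightforward composition argument.
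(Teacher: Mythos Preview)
Your argument is correct and is in fact cleaner than the paper's. Both proofs exploit the same two ingredients—nonatomicity to realize the empirical law inside $L^p(\Omega,\cF,P)$, and continuity of a finite-valued convex risk measure—but they realize the empirical law differently. The paper works with the quantile function $\hat H_{y,N,\omega}^{-1}$: it pulls a uniform random variable $Q$ out of nonatomicity, sets $X_\omega = \hat H_{y,N,\omega}^{-1}(Q)$, and then has to argue $L^p$-continuity of $y \mapsto \hat H_{y,N,\omega}^{-1}$ via order statistics and change of variables. The measurability step in the paper is correspondingly less transparent, because the map $\omega \mapsto \hat H_{y,N,\omega}^{-1}(Q)$ passes through the order statistics of the sample. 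Your approach bypasses all of this: by fixing a single equiprobable partition and plugging the \emph{unordered} sample values into $\Phi$, you get a fixed continuous linear map $\Phi:\real^N\to L^p$, and the entire argument reduces to composing continuous (resp.\ measurable) coordinate maps with the continuous $\tilde\rho = \risk\circ\Phi$. What the paper's route buys is a construction that would generalize to non-uniform empirical weights or other sample-based laws; what your route buys is a shorter, more elementary proof with no order statistics and no quantile-function bookkeeping.
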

\begin{proof}
Let
$\pobj(y,\xi^{(1)}) \leq \cdots \leq \pobj(y,\xi^{(N)})$
be the order statistics of the  sample
$\pobj(y,\xi^1), \ldots, \pobj(y,\xi^N)$.
For $q \in (0,1]$ and $y \in Y_0$, we have
$\hat{H}_{y,N}(q; \omega^\prime) = \pobj(y,\xi^{(j)}(\omega^\prime))$
if $q \in ((j-1)/N, j/N]$ irrespective of whether the sample is distinct. 

We show that $y \mapsto \risk[\hat{H}_{y,N}^{-1}(G(\cdot); \omega^\prime)]$ 
is continuous
for each $\omega^\prime \in \Omega^\prime$.
Let $y_k \to y$ and fix $\omega^\prime \in \Omega^\prime$.
Using the fact that $\nu$ is the uniform distribution
and $P \circ G^{-1} = \nu$, we have
\begin{align*}
	\int_{\Omega}
	|\hat{H}_{y_k,N}^{-1}(G(\omega); \omega^\prime)-\hat{H}_{y,N}^{-1}(G(\omega); \omega^\prime)|^p
	\du P(\omega)
	&=
	\int_{0}^1
	|\hat{H}_{y_k,N}^{-1}(q; \omega^\prime)-\hat{H}_{y,N}^{-1}(q; \omega^\prime)|^p
	\du \nu(q)
	\\
	&=
	\frac{1}{N} \sum_{i=1}^N
	|\pobj(y_k,\xi^i(\omega^\prime))-\pobj(y,\xi^i(\omega^\prime))|^p.
\end{align*}
Since $\pobj$ is a \Caratheodory\ function and $p \in [1,\infty)$,
it follows that
$\hat{H}_{y_k,N}^{-1}(G(\cdot); \omega^\prime) \to \hat{H}_{y,N}^{-1}(G(\cdot); \omega^\prime)$
in $L^p(\Omega,\cF, P)$.
Combined with the continuity of $\Risk{}$, we have
$\Risk{\hat{H}_{y_k,N}^{-1}(G(\cdot); \omega^\prime)} \to 
\Risk{\hat{H}_{y,N}^{-1}(G(\cdot); \omega^\prime)}$
as $k \to \infty$. Consequently,
$y \mapsto \Risk{\hat{H}_{y,N}^{-1}(G(\cdot),\omega^\prime)}$ is continuous
for each $\omega^\prime \in \Omega^\prime$.

For each fixed $y \in Y_0$, the function
$\omega^\prime \mapsto \hat{H}_{y,N}^{-1}(G(\cdot); \omega^\prime) \in L^p(\Omega,\cF, P)$
is measurable because it is the composition of a piecewise
constant and measurable functions.

Combining these arguments, we find that
$(y,\omega^\prime) \mapsto \risk[\hat{H}_{y,N}^{-1}(G(\cdot); \omega^\prime)]$
is a \Caratheodory\ function.
\end{proof}

\begin{corollary}
	\label{lem:emp-risk-op-has-solution}
	Under the hypotheses of \Cref{prop:emp-Rylsc},
	\textnormal{(a)}
	$\mathscr{S}$ is nonempty and closed,
	\textnormal{(b)}
	$\hat{\mathscr{S}}_N^{r}$ has nonempty, closed images
	for each $r \in [0,\infty)$,
	and
	\textnormal{(c)}
	$\hat{m}_N^*$ 	and	$\hat{\mathscr{S}}_N^{r}$ are measurable
	for each $r \in [0,\infty)$.
\end{corollary}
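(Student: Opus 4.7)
The plan is to handle parts (a) and (b) together by the direct method of the calculus of variations in the weak topology, and to establish part (c) by combining a Carath\'eodory structure argument with a graph-measurability/projection-theorem argument for the $\operatorname{argmin}$ multifunction. Throughout I will write
\[
J(z,\omega) \coloneqq \risk[\hat{H}_{\B z, N, \omega}] + \wp(z),
\qquad
J_\infty(z) \coloneqq \risk[F(\B z)] + \wp(z).
\]

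For parts (a) and (b), \Cref{prop:RBzwlsc} gives complete continuity of $z \mapsto \risk[F(\B z)]$ on $\csp$. For the empirical objective, \Cref{prop:emp-Rylsc} yields continuity of $y \mapsto \risk[\hat{H}_{y,N,\omega}]$ on $Y_0$ for each fixed $\omega \in \Omega$; composing with the completely continuous operator $\B$ produces complete continuity of $z \mapsto \risk[\hat{H}_{\B z, N, \omega}]$ as well. Since $\wp$ is convex and continuous, hence weakly lower semicontinuous, both $J_\infty$ and $J(\cdot,\omega)$ are weakly lower semicontinuous on $\adcsp$. Because $\adcsp$ is bounded, closed, and convex in the reflexive Banach space $\csp$, it is weakly sequentially compact, and the direct method produces minimizers in both cases. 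This gives $\mathscr{S} \neq \emptyset$ and $\hat{\mathscr{S}}_N^r(\omega) \neq \emptyset$ for every $\omega \in \Omega$ and every $r \geq 0$. Closedness of both sets then follows from norm continuity of $J_\infty$ and $J(\cdot,\omega)$, since each is a sublevel set of a continuous function intersected with the closed set $\adcsp$.

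For part (c), the starting point is that $J$ is a Carath\'eodory function on $\adcsp \times \Omega$: continuity in $z$ was established above, and measurability in $\omega$ is immediate from \Cref{prop:emp-Rylsc} applied with $y = \B z$ fixed. Because $\csp$ is separable, $\adcsp$ is a separable metric space, so $J$ is jointly measurable with respect to $\mathcal{B}(\adcsp) \otimes \cF$ by the standard Carath\'eodory argument. Letting $D \subset \adcsp$ be countable and dense, the continuity of $J(\cdot,\omega)$ implies
\[
\hat{m}_N^*(\omega) = \inf_{z \in D}\, J(z,\omega),
\]
which is a countable infimum of measurable functions and therefore measurable.

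The most delicate step is the measurability of the set-valued map $\hat{\mathscr{S}}_N^r$. Its graph
\[
\operatorname{gr}(\hat{\mathscr{S}}_N^r) = \{(z,\omega) \in \adcsp \times \Omega : J(z,\omega) \leq \hat{m}_N^*(\omega) + r\}
\]
is $\mathcal{B}(\adcsp) \otimes \cF$-measurable by what has just been proved. Since $\adcsp$ is a closed subset of the separable Banach space $\csp$, it is Polish under the norm topology, and because $(\Omega,\cF, P)$ is complete, the projection theorem (equivalently, a Himmelberg-type measurable selection theorem) applies and yields measurability of $\omega \mapsto \hat{\mathscr{S}}_N^r(\omega)$ in the usual sense. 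This is the main obstacle: transferring measurability from the graph to the multifunction genuinely needs both the Polish structure of the target and the completeness hypothesis on the probability space, neither of which can be dropped.
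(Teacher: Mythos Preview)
Your proof is correct and follows essentially the same strategy as the paper. For (a) and (b) both you and the paper invoke \Cref{prop:RBzwlsc}, \Cref{prop:emp-Rylsc}, complete continuity of $\B$, and the direct method; for (c) the paper compresses your Carath\'eodory/joint-measurability/projection argument into a single citation of Theorem~8.2.11 in Aubin--Frankowska, which packages exactly the marginal-function and argmin-measurability statements you unpack by hand.
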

\begin{proof}
\begin{enumerate}[wide,nosep]
	\item[(a)] 	Since the set $\adcsp$ is nonempty, closed, convex,
	and bounded,
	\Cref{prop:RBzwlsc} when combined with the direct method of the
	calculus of variations ensures the assertions.
	\item[(b)] Using the properties of $\adcsp$ listed in
	part~(a), \Cref{prop:emp-Rylsc} when combined with
	the direct method of the calculus of variations
	and the complete continuity of $\B$ ensures the assertions.
	\item[(c)]
	Since $\B$ is completely continuous
	and $\csp$ is a Banach space, $\B$ is continuous.
	\Cref{prop:emp-Rylsc}, the continuity of
	$\B$, and Theorem~8.2.11 in \cite{Aubin2009} imply the measurability
	assertions.
\end{enumerate}
\end{proof} 

\begin{proof}[{Proof of \Cref{thm:consistency}}]
To establish the consistency statements, 	we verify the hypotheses of
\Cref{cor:deterministicconsistency,cor:lln31}.
\Cref{lem:emp-risk-op-has-solution} ensures that
$\mathscr{S}$ is nonempty. Hence
$\dist{\cdot}{\mathscr{S}}$ is (Lipschitz)
continuous \cite[Thm.\ 3.16]{Aliprantis2006}.
\Cref{lem:emp-risk-op-has-solution} implies that $\hat{m}_N^*$ is measurable
and that $\hat{\mathscr{S}}_N^{r_N}$ is measurable
with closed, nonempty images.
Combined with Theorem~8.2.11 in \cite{Aubin2009},
it follows that $\deviation{\hat{\mathscr{S}}_N^{r_N}}{\mathscr{S}}$ is measurable.

\Cref{cor:lln31} ensures that
for almost all $\omega^\prime \in \Omega^\prime$,
$\adcsp \ni z \mapsto \risk[\hat{H}_{\B z,N}^{-1}(G(\cdot); \omega^\prime)]$
Mosco-epiconverges
to $\adcsp \ni z \mapsto \risk[F(\B z)]$ as $N \to\infty$.
We have $\hat{\mathscr{S}}_N^{r_N}\subset \adcsp$.
Moreover, $\hat{\mathscr{S}}_N^{r_N}$	 and $\mathscr{S} \subset \adcsp$
are nonempty, and $\wp$ is continuous and convex.
Now, for almost all $\omega^\prime \in \Omega^\prime$,
\Cref{cor:deterministicconsistency} ensures that
$\hat{m}_N^*(\omega^\prime)\to m^*$
as $N \to\infty$.
Hence \wpone, $\hat{m}_N^*\to m^*$ as $N \to\infty$.
If furthermore $\wp$ is an R-function, then
for almost all $\omega^\prime \in \Omega^\prime$,
\Cref{cor:deterministicconsistency} ensures
$\deviation{\hat{\mathscr{S}}_N^{r_N}(\omega^\prime)}{\mathscr{S}} \to 0$
as $N \to\infty$.
Hence \wpone,
$\deviation{\hat{\mathscr{S}}_N^{r_N}}{\mathscr{S}}\to 0$ as $N \to\infty$.
Since
$ \hat{m}_N^* $ and
$\deviation{\hat{\mathscr{S}}_N^{r_N}}{\mathscr{S}}$
are measurable, we obtain the almost sure convergence statements.
\end{proof}

\section{Applications}\label{sec:apps}
We conclude with the application of our main result, 
\Cref{thm:consistency}, to several problem classes.
\subsection{Consistency of Epi-Regularized and Smoothed Empirical
	Approximations}

Using \Cref{thm:consistency}, we
demonstrate the consistency of solutions to epi-regularized and smoothed
risk-averse programs using the average value-at-risk. These types of risk measures are popular in numerical approaches, see \cite{Kouri2016,Kouri2020,Kouri2020a,Beiser2020,Wechsung2021,Curi2020}.
For $\beta \in [0,1)$,
the average value-at-risk
$\AVaR{} : \bochnerreal{1}{\Omega,\cF,P} \to \real$ is
defined by
\begin{align*}
	\AVaR{X} = \inf_{t \in \real}\,
	\{\,t + \tfrac{1}{1-\beta} \cE{\maxo{X-t}} \,\},
\end{align*}
where $\maxo{x} \coloneqq  \max\{0,x\}$ for $x \in \real$.
Throughout the section, $m^*$ and $\mathscr{S}$
denotes the optimal value and $0$-solution set
of \eqref{eq:gen-risk-op},
respectively, with the risk measure $\Risk{} = \AVaR{}$.
Moreover, we denote by $m_N^*$
the optimal value and by $\hat{\mathscr{S}}_N^{r}$
the $r$-solution set ($r \geq 0$) of the problem's empirical approximation.
The average value-at-risk $\AVaR{}$ is a law invariant risk measure
\cite{Shapiro2013a}.

Epi-regularization of risk measures has been proposed and analyzed in
\cite{Kouri2020}. We apply the epi-regularization to
the average value-at-risk.
As in Example~2 in \cite{Kouri2020}, 
we consider $\AVaR{}$ as defined on $\bochnerreal{2}{\Omega,\cF,P}$
throughout the remainder of this section.
We define
$\Phi : \bochnerreal{2}{\Omega,\cF,P} \to \real$ by
\begin{align*}
	\Phi[X] \coloneqq (1/2)\cE{X^2} + \cE{X}.
\end{align*}
For $\varepsilon> 0$, the epi-regularization
$\epiAVaR{}{\varepsilon} \colon \bochnerreal{2}{\Omega,\cF,P} \to \mathbb R$
of  $\AVaR{}$  is given by
\begin{align}
	\label{eq:epiAVaR}
	\epiAVaR{X}{\varepsilon} \coloneqq
	\inf_{Y \in \bochnerreal{2}{\Omega,\cF,P}} \, \{\,
	\AVaR{X-Y} + \varepsilon\Phi[\varepsilon^{-1}Y]
	\,\}.
\end{align}
The risk functional $\epiAVaR{}{\varepsilon}$ can be shown to be law invariant. See Appendix \ref{app:LI}. 

For $\varepsilon > 0$,
we consider the epi-regularized empirical average value-at-risk
optimization problem
\begin{align*}
	\min_{z \in Z_{\rm ad}}\,
	\{\, \epiAVaR{\hat{H}_{\B z,N}}{\varepsilon} + \wp(z)\,\}.
\end{align*}
We let $\hat{m}_{\mathrm{epi},N}^{\varepsilon}$
be its optimal value
and $\hat{\mathscr{S}}^{\varepsilon}_{\mathrm{epi},N}$  be its 
$0$-solution set.
Note that for fixed $\varepsilon > 0$, our main result, \Cref{thm:consistency}, already provides an asymptotic consistency result. However, in numerical procedures, the $\varepsilon$-parameter is typically driven to zero. Therefore, we prove a stronger statement here.
\begin{proposition}
	\label{cor:epiavar}
	Let \Cref{assumption:strongconsistency} hold with $p=2$.
	Suppose further that $(\Omega,\cF, P)$ is nonatomic and complete.
	Let $(\varepsilon_N) \subset (0,\infty)$
	with $\varepsilon_N \to 0$ as $N \to\infty$.
	Then $\hat{m}_{\mathrm{epi},N}^{\varepsilon_N} \to m^*$
	\wpone as $N \to \infty$.
	If furthermore $\wp$ is an R-function, then
	$\deviation{\hat{\mathscr{S}}^{\varepsilon_N}_{\mathrm{epi},N}}{\mathscr{S}} \to 0$ \wpone
	as $N \to\infty$.
\end{proposition}

The proof of \Cref{cor:epiavar} is based on the following result.
\begin{lemma}
	\label{lem:errorepivar}
	Fix $\varepsilon > 0$.
	The functional  $\epiAVaR{}{\varepsilon} : \bochnerreal{2}{\Omega,\cF,P} \to \real$
	is a law invariant, convex risk measure.
	For all
	$X \in \bochnerreal{2}{\Omega,\cF,P}$, it holds that
	\begin{align*}
		\AVaR{X} -\tfrac{\varepsilon\beta}{2(1-\beta)}  \leq
		\epiAVaR{X}{\varepsilon} \leq \AVaR{X}.
	\end{align*}
\end{lemma}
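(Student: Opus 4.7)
The plan is to separate the two assertions. For the claim that $\epiAVaR{}{\varepsilon}$ is a convex risk measure I propagate the axioms of $\AVaR{}$ through the infimum over $Y$ in \eqref{eq:epiAVaR}. For the sandwich estimate the upper bound is immediate from the choice $Y=0$, while the lower bound reduces via subadditivity of $\AVaR{}$ to an explicit Fenchel-type estimate that I evaluate using the Rockafellar--Uryasev dual representation of $\AVaR{}$.

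Turning to the risk-measure axioms, the joint map $(X,Y) \mapsto \AVaR{X-Y} + \varepsilon\Phi[\varepsilon^{-1}Y]$ is convex on $\bochnerreal{2}{\Omega,\cF,P} \times \bochnerreal{2}{\Omega,\cF,P}$ since $\AVaR{}$ is convex, the quadratic-plus-linear $\Phi$ is convex, and convexity is preserved by the linear shift $(X,Y)\mapsto X-Y$. The marginal infimum over $Y$ is then convex in $X$, giving (R1). For (R2), if $X_1 \le X_2$ a.s.\ then for each fixed $Y$ we have $X_1-Y \le X_2-Y$ a.s., so $\AVaR{X_1-Y} \le \AVaR{X_2-Y}$, and the inequality is preserved by the infimum. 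For (R3), translation equivariance of $\AVaR{}$ gives $\AVaR{X+c-Y} = \AVaR{X-Y} + c$ for any constant $c$; since the $\Phi$-term is independent of $X$, pulling $c$ out of the infimum yields $\epiAVaR{X+c}{\varepsilon} = \epiAVaR{X}{\varepsilon} + c$. Law invariance is established in Appendix~\ref{app:LI}, and finite-valuedness will follow from the sandwich inequalities.

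For the upper bound, substituting $Y=0$ in \eqref{eq:epiAVaR} gives $\epiAVaR{X}{\varepsilon} \le \AVaR{X}$ since $\Phi[0]=0$. For the lower bound, coherence (hence subadditivity) of $\AVaR{}$ gives $\AVaR{X-Y} \ge \AVaR{X} - \AVaR{Y}$, and using $\varepsilon\Phi[\varepsilon^{-1}Y] = \cE{Y} + (2\varepsilon)^{-1}\cE{Y^2}$ it suffices to prove
\begin{align*}
\sup_{Y \in \bochnerreal{2}{\Omega,\cF,P}}\, \bigl\{\,\AVaR{Y} - \cE{Y} - (2\varepsilon)^{-1}\cE{Y^2}\,\bigr\} \;\le\; \frac{\varepsilon\beta}{2(1-\beta)}.
\end{align*}
I would bound this using the Rockafellar--Uryasev dual representation $\AVaR{Y} = \sup_{\zeta \in \mathcal{Z}} \cE{\zeta Y}$ with $\mathcal{Z} = \{\,\zeta \ge 0 : \cE{\zeta}=1,\; \zeta \le (1-\beta)^{-1}\,\}$; since $\cE{\zeta}=1$, the left-hand side equals $\sup_{\zeta \in \mathcal{Z}} \sup_{Y} \{\cE{(\zeta-1)Y} - (2\varepsilon)^{-1}\cE{Y^2}\}$ after interchanging the two suprema, and completing the square in $Y$ yields inner value $(\varepsilon/2)\cE{(\zeta-1)^2}$, attained at $Y = \varepsilon(\zeta-1)$.

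The final step is the tight bound $\cE{(\zeta-1)^2} = \cE{\zeta^2} - 1 \le \beta/(1-\beta)$ for $\zeta \in \mathcal{Z}$, which follows from the pointwise inequality $\zeta^2 \le \zeta/(1-\beta)$ together with $\cE{\zeta}=1$. I expect this last step to be the main point requiring care: a direct estimate of $(\zeta-1)^2$ using only the essential upper bound on $\zeta$ is too loose, and obtaining the sharp constant $\beta/(1-\beta)$ requires simultaneously exploiting both the bound $\zeta \le (1-\beta)^{-1}$ and the normalization $\cE{\zeta}=1$ in the multiplicative form $\zeta^2 \le \zeta/(1-\beta)$.
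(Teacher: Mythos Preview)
Your proof is correct. Both the risk-measure axioms and the sandwich estimate go through as you describe; the final pointwise inequality $\zeta^2 \le \zeta/(1-\beta)$ combined with $\cE{\zeta}=1$ does give the sharp constant.

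The paper takes a somewhat different route to the lower bound. Rather than invoking subadditivity of $\AVaR{}$ to decouple $X$ from $Y$, it appeals to a general estimate for epi-regularized risk measures (Proposition~2 in \cite{Kouri2020}): for any $\vartheta \in \partial\AVaR{X}$, one has $\epiAVaR{X}{\varepsilon} \ge \AVaR{X} - \varepsilon\Phi^*[\vartheta]$, which is the Fenchel--Young inequality applied to the infimal-convolution structure and uses only convexity of the risk measure. Since $\Phi^*[\vartheta] = \tfrac12\cE{(\vartheta-1)^2}$ and the subgradients of $\AVaR{}$ lie in the risk envelope $\mathcal{Z}$, the paper then bounds $\cE{(\vartheta-1)^2}$ by exactly the same computation you perform. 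Your approach, by contrast, passes through $\sup_{\zeta\in\mathcal{Z}} \varepsilon\Phi^*[\zeta]$ after using subadditivity and the dual representation, so it relies on the coherence of $\AVaR{}$ (specifically positive homogeneity, to get subadditivity) rather than just convexity. The paper's route is thus slightly more general in principle---it would apply to epi-regularizations of convex but non-coherent risk measures---while yours is more self-contained and avoids the external reference. Both collapse to the identical endgame of bounding $\cE{(\zeta-1)^2}$ over $\mathcal{Z}$.
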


\begin{proof}
The functional $\epiAVaR{}{\varepsilon}$ is  a convex risk measure
\cite[pp.\ 776 and 778--779]{Kouri2020}. By the arguments in Appendix \ref{app:LI}, it is law 
invariant.
Since $\Phi[0] = 0$, the second estimate is implied
by Proposition~1 in \cite{Kouri2020}.
Fix $X \in \bochnerreal{2}{\Omega,\cF,P}$.
Since $\AVaR{}$ is subdifferentiable \cite[p.\ 243]{Shapiro2021},
Proposition~2 in \cite{Kouri2020} yields
for all subgradients $\vartheta \in \partial \AVaR{X}$
(see, e.g., p.\ 480 in \cite{Shapiro2021} 
for definitions of subgradients and subdifferentials),
\begin{align*}
	\epiAVaR{X}{\varepsilon} \geq \AVaR{X} - \varepsilon \Phi^*[\vartheta].
\end{align*}
Here $\Phi^*$ is the Fenchel conjugate to $\Phi$;
see, e.g., p.\ 232 in
\cite{Shapiro2021} for a definition.
Let $\vartheta \in \partial \AVaR{X}$ be arbitrary.
We have $0 \leq \vartheta \leq 1/(1-\beta)$ \wpone,
$\cE{\vartheta} = 1$ \cite[p.\ 243]{Shapiro2021} and
$\Phi^*[\vartheta] = (1/2)\cE{(\vartheta-1)^2}$;
see Remark~5 in \cite{Kouri2020}.
Hence
\begin{align*}
	\Phi^*[\vartheta] = (1/2)\cE{\vartheta^2}
	- \cE{\vartheta} +(1/2)
	= (1/2)\cE{\vartheta^2}
	-(1/2)
	\leq  \frac{1}{2}\frac{1-(1-\beta)}{1-\beta}
	= \frac{1}{2}\frac{\beta}{1-\beta}.
\end{align*}
\end{proof} 

\begin{proof}[{Proof of \Cref{cor:epiavar}}]
Following the proof of \Cref{lem:emp-risk-op-has-solution}
and using the fact that $\epiAVaR{}{\varepsilon_N}$
is a law invariant, convex risk measure
(see \Cref{lem:errorepivar}),
we find that $\hat{m}_{\mathrm{epi},N}^{\varepsilon_N}$ and
$\mathscr{S}_{\mathrm{epi},N}^{\varepsilon_N}$ are measurable.
\Cref{lem:errorepivar} ensures that
$\hat{m}_N^*  -\tfrac{\varepsilon_N\beta}{2(1-\beta)}
\leq  \hat{m}_{\mathrm{epi},N}^{\varepsilon_N} \leq \hat{m}_N^*$.
Applying \Cref{thm:consistency}
with $\Risk{} = \AVaR{}$ yields
$\hat{m}_N^* \to m^*$ \wpone as $N \to \infty$.
Combined with $\varepsilon_N \to 0$,
we find that $\hat{m}_{\mathrm{epi},N}^{\varepsilon_N} \to m^*$
\wpone as $N \to \infty$.

If $z_N^{\varepsilon_N} \in \hat{\mathscr{S}}^{\varepsilon_N}_{\mathrm{epi},N}$, then
\Cref{lem:errorepivar} ensures that
$z_N^{\varepsilon_N} \in \hat{\mathscr{S}}_N^{r_N}$,
where $r_N \coloneqq\tfrac{\varepsilon_N\beta}{2(1-\beta)}$.
Hence $\hat{\mathscr{S}}^{\varepsilon_N}_{\mathrm{epi},N} \subset \hat{\mathscr{S}}_N^{r_N}$, yielding
$\deviation{\hat{\mathscr{S}}^{\varepsilon_N}_{\mathrm{epi},N}}{\mathscr{S}}
\leq \deviation{ \hat{\mathscr{S}}_N^{r_N}}{\mathscr{S}}
$.
Applying \Cref{thm:consistency}
with $\Risk{} = \AVaR{}$ yields the second assertion.
\end{proof} 

Next, we establish the consistency of solutions to smoothed
average value-at-risk problems using a smoothing function
for $\maxo{\cdot}$. For brevity, we focus on a particular smoothing function for
the plus function $\maxo{\cdot}$. For $\varepsilon > 0$,
we define the smoothed
plus function $\maxo{\cdot}_\varepsilon : \real \to \real$ by
\begin{align*}
	\maxo{x}_\varepsilon \coloneqq
	\varepsilon \ln(1+\exp(x/\varepsilon)).
\end{align*}
Using $\maxo{\cdot}_\varepsilon$, we define the smoothed
average value-at-risk
$\smoothAVaR{}{\varepsilon} \colon \bochnerreal{2}{\Omega,\cF,P} \to \real$ by
\begin{align}
	\label{eq:smoothAVaR}
	\smoothAVaR{X}{\varepsilon}
	\coloneqq
	\inf_{t \in \real} \, \{\,
	t + \tfrac{1}{1-\beta} \cE{\maxo{X-t}_\varepsilon}
	\, \}.
\end{align}
This version of the smoothed average value-at-risk has been used in
\cite{Wechsung2021} for stochastic stellarator coil design
and in \cite{Beiser2020} for adaptive sampling techniques
for risk-averse optimization. See the Appendix \ref{app:LI} for a short proof
of its law invariance.

For $\varepsilon > 0$,
we consider the smoothed empirical average value-at-risk
optimization problem
\begin{align*}
	\min_{z \in Z_{\rm ad}}\,
	\{\, \smoothAVaR{\hat{H}_{\B z,N}}{\varepsilon} + \wp(z) \,\},
\end{align*}
We let $\hat{m}_{\mathrm{s},N}^{\varepsilon}$
be its optimal value
and $\hat{\mathscr{S}}^{\varepsilon}_{\mathrm{s},N}$  be its 
$0$-solution set.

\begin{proposition}
	\label{cor:smoothedavar}
	Let \Cref{assumption:strongconsistency} hold with $p=2$.
	Suppose further that $(\Omega,\cF, P)$ is nonatomic and complete.
	Let $(\varepsilon_N) \subset (0,\infty)$
	with $\varepsilon_N \to 0$ as $N \to\infty$.
	Then $\hat{m}_{\mathrm{s},N}^{\varepsilon_N} \to m^*$
	\wpone as $N \to \infty$.
	If furthermore $\wp$ is an R-function, then
	$\deviation{\hat{\mathscr{S}}^{\varepsilon_N}_{\mathrm{s},N}}{\mathscr{S}} \to 0$ \wpone
	as $N \to\infty$.
\end{proposition}
\Cref{cor:smoothedavar} is established using
\Cref{lem:errorsmoothvar}.
\begin{lemma}
	\label{lem:errorsmoothvar}
	Fix $\varepsilon > 0$.
	The functional $\smoothAVaR{}{\varepsilon} \colon \bochnerreal{2}{\Omega,\cF,P} \to \real$
	is a law invariant, convex risk measure. For all  $X \in \bochnerreal{2}{\Omega,\cF,P}$,
	it holds that
	\begin{align*}
		\AVaR{X}   \leq
		\smoothAVaR{X}{\varepsilon} \leq \AVaR{X}
		+ \ln(2)\varepsilon/(1-\beta).
	\end{align*}
\end{lemma}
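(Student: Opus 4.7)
The plan is to proceed in four steps, adapting the strategy used for \Cref{lem:errorepivar} but with pointwise rather than Fenchel-conjugate bounds.

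\textbf{Step 1: pointwise bounds on softplus.} I would first establish that the smoothed plus function $\maxo{x}_\varepsilon = \varepsilon\ln(1+\exp(x/\varepsilon))$ is convex and nondecreasing on $\real$ and satisfies
\begin{align*}
\maxo{x} \;\leq\; \maxo{x}_\varepsilon \;\leq\; \maxo{x} + \varepsilon\ln 2
\qquad \text{for all } x \in \real.
\end{align*}
Writing $u = x/\varepsilon$, the lower inequality follows from $\ln(1+e^u) \geq \ln(\max\{1,e^u\}) = \maxo{u}$, and the upper inequality from $1+e^u \leq 2\max\{1,e^u\}$, hence $\ln(1+e^u) \leq \ln 2 + \maxo{u}$.

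\textbf{Step 2: bounds on $\smoothAVaR{X}{\varepsilon}$.} Integrating the Step~1 inequalities with $x = X(\omega) - t$, adding $t$, multiplying by the constant $1/(1-\beta) > 0$, and taking the infimum over $t \in \real$ transfers the pointwise bounds to $\smoothAVaR{X}{\varepsilon}$:
\begin{align*}
\AVaR{X} \;\leq\; \smoothAVaR{X}{\varepsilon} \;\leq\; \AVaR{X} + \ln(2)\varepsilon/(1-\beta).
\end{align*}
In particular, since $\AVaR{X}$ is finite for $X \in \bochnerreal{2}{\Omega,\cF,P}$, the smoothed functional is finite-valued.

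\textbf{Step 3: risk measure axioms (R1)--(R3).} The map $(X,t) \mapsto t + \tfrac{1}{1-\beta}\cE{\maxo{X-t}_\varepsilon}$ is jointly convex on $\bochnerreal{2}{\Omega,\cF,P} \times \real$ because $\maxo{\cdot}_\varepsilon$ is convex and composition with the affine map $(X,t) \mapsto X - t$ followed by expectation preserves convexity; partial infimization in $t$ then yields convexity in $X$, giving (R1). For (R2), if $X \leq Y$ \wpone, monotonicity of $\maxo{\cdot}_\varepsilon$ gives $\maxo{X-t}_\varepsilon \leq \maxo{Y-t}_\varepsilon$ pointwise for each $t$, so the objectives are ordered uniformly in $t$ and hence so are their infima. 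For (R3), the substitution $t \mapsto t + c$ in the infimum defining $\smoothAVaR{X+c}{\varepsilon}$ yields $\smoothAVaR{X+c}{\varepsilon} = c + \smoothAVaR{X}{\varepsilon}$.

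\textbf{Step 4: law invariance.} If $X_1, X_2 \in \bochnerreal{2}{\Omega,\cF,P}$ are distributionally equivalent, then $X_1 - t$ and $X_2 - t$ are distributionally equivalent for each $t \in \real$; since $\maxo{\cdot}_\varepsilon$ is Borel measurable, $\maxo{X_1-t}_\varepsilon$ and $\maxo{X_2-t}_\varepsilon$ have the same distribution and hence the same expectation. The $t$-dependent objective in \eqref{eq:smoothAVaR} agrees for $X_1$ and $X_2$, so taking the infimum gives $\smoothAVaR{X_1}{\varepsilon} = \smoothAVaR{X_2}{\varepsilon}$. I do not expect a substantive obstacle; the only point requiring minor care is finiteness of the infimum, which is automatic from the lower bound $\smoothAVaR{X}{\varepsilon} \geq \AVaR{X}$ established in Step~2 together with the feasibility of, say, $t = 0$ for an upper bound.
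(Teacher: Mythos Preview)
Your proposal is correct and follows essentially the same route as the paper: the error bounds are obtained from the pointwise softplus inequalities $\maxo{x} \leq \maxo{x}_\varepsilon \leq \maxo{x} + \varepsilon\ln 2$ exactly as you do in Steps~1--2, and law invariance is deduced from the optimized-certainty-equivalent structure via the same distributional argument you give in Step~4 (the paper relegates this to an appendix). The only cosmetic difference is that for the convex-risk-measure axioms (R1)--(R3) the paper cites \cite{Kouri2016} rather than writing out the direct verification you supply in Step~3.
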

\begin{proof}
The smoothed average value-at-risk
$\smoothAVaR{}{\varepsilon}$ is a convex risk measure
\cite[Props.\ 4.4--4.6]{Kouri2016}.
By the arguments in Appendix \ref{app:LI}, it is law 
invariant.
For $x \in \real$, we have
$\maxo{x} \leq \maxo{x}_\varepsilon \leq \maxo{x} + \varepsilon\ln(2)$,
yielding the error bounds.
\end{proof} 

\begin{proof}[{Proof of \Cref{cor:smoothedavar}}]
The proof is similar to that of
\Cref{cor:epiavar}.
Following the proof of \Cref{lem:emp-risk-op-has-solution}
and using the fact that $\smoothAVaR{}{\varepsilon_N}$
is a law invariant, convex risk measure
(see \Cref{lem:errorsmoothvar}),
we find that $\hat{m}_{\mathrm{s},N}^{\varepsilon_N}$ and
$\hat{\mathscr{S}}^{\varepsilon_N}_{\mathrm{s},N}$ are measurable.
\Cref{lem:errorepivar} ensures that
$\hat{m}_N^* \leq  \hat{m}_{\mathrm{s},N}^{\varepsilon_N}
\leq \hat{m}_N^* + \ln(2)\varepsilon_N/(1-\beta)$.
Applying \Cref{thm:consistency}
with $\Risk{} = \AVaR{}$ yields
$\hat{m}_N^* \to m^*$ \wpone as $N \to \infty$.
Combined with $\varepsilon_N \to 0$,
we find that $\hat{m}_{\mathrm{s},N}^{\varepsilon_N} \to m^*$
\wpone as $N \to \infty$.

If $z_N^{\varepsilon_N} \in \hat{\mathscr{S}}^{\varepsilon_N}_{\mathrm{s},N}$, 
then
\Cref{lem:errorepivar} ensures that
$z_N^{\varepsilon_N} \in \hat{\mathscr{S}}_N^{r_N}$,
where $r_N \coloneqq\ln(2)\varepsilon_N/(1-\beta)$.
Hence $\hat{\mathscr{S}}^{\varepsilon_N}_{\mathrm{s},N} \subset \hat{\mathscr{S}}_N^{r_N}$, yielding
$\deviation{\hat{\mathscr{S}}^{\varepsilon_N}_{\mathrm{s},N}}{\mathscr{S}}
\leq \deviation{ \hat{\mathscr{S}}_N^{r_N}}{\mathscr{S}}
$.
Applying \Cref{thm:consistency}
with $\Risk{} = \AVaR{}$ yields the second assertion.
\end{proof}

\subsection{Risk-Averse Semilinear PDE-Constrained Optimization}
\label{subsect:ocpspde}
Our consistency result, \Cref{thm:consistency}, is applicable to
risk-averse semilinear PDE-constrained optimization as we demonstrate
in this section. Following \cite{Kouri2020a}
(see also \cite{Garreis2019a,Geiersbach2020}), we consider
\begin{align}
	\label{eq:ocpspde}
	\min_{z \in \adcsp}\,
	(1/2) \Risk{\norm[L^2(\domain)]{\maxo{1-\iota S(z)}}^2}
	+ (\alpha/2) \norm[L^2(\domain)]{z}^2,
\end{align}
where $\alpha > 0$,
$\iota : H^1(\domain) \to L^2(\domain)$
is the embedding operator of the compact embedding
$H^1(\domain) \embedding L^2(\domain)$,
$\adcsp \coloneqq \{\, z \in L^2(\domain) \colon
\mathfrak{l}(x) \leq z(x) \leq \mathfrak{u}(x) \text{ for a.e. } x \in \domain \,\}$
with $\mathfrak{l}$, $\mathfrak{u} \in L^2(\domain)$
and $\mathfrak{l}(x) \leq \mathfrak{u}(x)$
for a.e.\ $x \in \domain$, 
and for each $(z,\xi) \in L^2(\domain) \times \Xi$,
$S(z)(\xi) \in H^1(\domain)$ is the solution to:
\begin{align}
	\label{eq:pde}
	\text{find}\quad u \in H^1(\domain) \colon \quad
	\A(u,\xi)  = \B_1(\xi) \iota^* z + \rhs(\xi),
\end{align}
where $\iota^*$ is the adjoint
operator to $\iota$,
$\A : H^1(\domain) \times \Xi \to H^1(\domain)^*$,
$\B_1 : \Xi \to \spL{H^1(\domain)^*}{H^1(\domain)^*}$,
and $\rhs : \Xi \to H^1(\domain)^*$ are defined by
\begin{align*}
	\dualp[H^1(\domain)]{\A(u,\xi)}{v}
	& \coloneqq \int_\domain
	a(\xi)(x) [\nabla u(x)^T \nabla v(x) + u(x) v(x) ]\du x
	+ \int_\domain u(x)^3v(x) \du x, \\
	\dualp[H^1(\domain)]{\B_1(\xi)y}{v}
	& \coloneqq \int_\domain [B(\xi)y](x)v(x) \du x, \quad
	\dualp[H^1(\domain)]{\rhs(\xi)}{v}
	\coloneqq \int_\domain b(\xi)(x) v(x) \du x.
\end{align*}
Here, $b : \Xi \to L^2(\domain)$ is essentially bounded,
$a : \Xi \to C^0(\bar{\domain})$ is  measurable
and there exist constants $\kappa_{\min}$, $\kappa_{\max}  > 0$
such that
$\kappa_{\min} \leq a(\xi)(x) \leq \kappa_{\max}$ for all
$(\xi,x) \in \Xi \times \bar{\domain}$.
It remains to define $B(\xi) \colon H^1(\domain)^* \to H^1(\domain)$.
Fix $(y,\xi) \in H^1(\domain)^* \times \Xi$.
We define  $B(\xi)y \in H^1(\domain)$ as the solution to:
find $w \in H^1(\domain)$ such that
\begin{align*}
	\int_\domain [r(\xi) \nabla w(x)^T \nabla v(x) + w(x)v(x)]\du x
	= \dualp[H^1(\domain)]{y}{v}
	\quad \text{for all} \quad v \in H^1(\domain),
\end{align*}
where $r : \Xi \to (0,\infty)$ is  random
variable such that there exist $r_{\min}$, $r_{\max}  > 0$
with $r_{\min} \leq r(\xi) \leq r_{\max}$ for all $\xi \in \Xi$.
Since $\iota u = u$
for all $u \in H^1(\domain)$, we have
$\dualp[H^1(\domain)]{\iota^* z}{v} = \inner[L^2(\domain)]{z}{v}$
for all $z \in L^2(\domain)$ and $v \in H^1(\domain)$
\cite[p.\ 21]{Bonnans2000}.

We express \eqref{eq:ocpspde} in the form given in
\eqref{eq:gen-risk-op} and verify \Cref{assumption:strongconsistency}.
For each $(y, \xi) \in H^1(\domain)^* \times \Xi$,
we consider the 	auxiliary random operator equation:
\begin{align}
	\label{eq:auxpde}
	\text{find}\quad u \in  H^1(\domain) \colon \quad
	\A(u,\xi)  = \B_1(\xi)y + \rhs(\xi).
\end{align}
\begin{lemma}
	\label{lem:pde}
	Under the above hypotheses,
	for each $(y, \xi) \in  H^1(\domain)^* \times \Xi$, the
	operator equation
	\eqref{eq:auxpde} has a unique solution $\widetilde{S}(y)(\xi)$,
	$\widetilde{S}(y) \in \bochner{q}{H^1(\domain)}$
	for each $q \in [1,\infty]$ and  $y \in H^1(\domain)^*$,
	$(y,\xi) \mapsto
	\widetilde{S}(z)(\xi)$ is a \Caratheodory\ mapping,
	and $\widetilde{S}: H^1(\domain)^* \to  \bochner{q}{H^1(\domain)}$
	is Lipschitz continuous
	for each $q \in [1,\infty]$.
\end{lemma}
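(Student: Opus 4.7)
The plan is to treat \eqref{eq:auxpde} pointwise in $\xi$ as a monotone operator equation on $H^1(\domain)$ and combine a classical monotone-operator existence theorem with uniform a priori estimates. First, for each fixed $\xi \in \Xi$, I would check that $u \mapsto \A(u,\xi)$ is well defined, hemicontinuous, coercive, and strongly monotone from $H^1(\domain)$ into $H^1(\domain)^*$. Well-definedness of the cubic term $\int u^3 v\, \du x$ hinges on the Sobolev embedding $H^1(\domain) \embedding L^6(\domain)$, valid since $d\in\{1,2,3\}$, and hemicontinuity then follows by dominated convergence. Strong monotonicity with constant $\kappa_{\min}$ comes from the ellipticity bound $a(\xi) \geq \kappa_{\min}$ together with the monotonicity of $t\mapsto t^3$, which contributes the nonnegative summand $\int (u_1^3-u_2^3)(u_1-u_2)\,\du x$. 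Coercivity follows at once from these estimates. The Browder--Minty theorem then yields a unique solution $\widetilde{S}(y)(\xi) \in H^1(\domain)$.

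Next, I would derive a uniform-in-$\xi$ a priori bound by testing \eqref{eq:auxpde} with $u = \widetilde{S}(y)(\xi)$, yielding
\[
\kappa_{\min}\norm[H^1(\domain)]{u}^2 \leq \bigl(\norm[\spL{H^1(\domain)^*}{H^1(\domain)^*}]{\B_1(\xi)}\, \norm[H^1(\domain)^*]{y} + \norm[H^1(\domain)^*]{\rhs(\xi)}\bigr)\norm[H^1(\domain)]{u}.
\]
The operator norm of $\B_1(\xi)$ is uniformly bounded in $\xi$: the bilinear form defining $B(\xi)$ is uniformly $H^1$-coercive with constant $\min(1,r_{\min})$, so $B(\xi): H^1(\domain)^* \to H^1(\domain)$ is uniformly bounded, and composition with the continuous embedding $L^2(\domain) \embedding H^1(\domain)^*$ gives a uniform bound on $\B_1(\xi)$. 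Combined with $\norm[H^1(\domain)^*]{\rhs(\xi)} \leq C\norm[L^2(\domain)]{b(\xi)} \leq C\norm[\bochner{\infty}{L^2(\domain)}]{b}$, this produces $\norm[H^1(\domain)]{\widetilde{S}(y)(\xi)} \leq C(1+\norm[H^1(\domain)^*]{y})$, so $\widetilde{S}(y) \in \bochner{\infty}{H^1(\domain)} \subset \bochner{q}{H^1(\domain)}$ for every $q \in [1,\infty]$. Strong $\cF$-measurability of $\xi \mapsto \widetilde{S}(y)(\xi)$ I would obtain by a Galerkin approximation: the finite-dimensional Galerkin solutions are measurable via the implicit-function-type argument for strongly monotone finite-dimensional maps with measurable data, and strong monotonicity ensures convergence of the entire Galerkin sequence in $H^1(\domain)$ for each fixed $\xi$.

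For the Lipschitz continuity of $\widetilde{S} : H^1(\domain)^* \to \bochner{q}{H^1(\domain)}$, I would subtract the equations for $y_1,y_2$, test with $u_1(\xi)-u_2(\xi)$, and use strong monotonicity together with the uniform bound on $\|\B_1(\xi)\|$ to get
\[
\kappa_{\min}\norm[H^1(\domain)]{u_1(\xi)-u_2(\xi)}^2 \leq \norm[\spL{H^1(\domain)^*}{H^1(\domain)^*}]{\B_1(\xi)}\norm[H^1(\domain)^*]{y_1-y_2}\norm[H^1(\domain)]{u_1(\xi)-u_2(\xi)},
\]
so that cancelling one factor and taking $L^q(\Xi,\cA,\bP)$-norms yields a Lipschitz constant independent of $q$. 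The most delicate step is the measurable-selection aspect: while pointwise existence is classical, verifying that the unique solution depends measurably on $\xi$ demands care, and the Galerkin route above is the cleanest because strong monotonicity upgrades weak subsequential compactness to full-sequence strong convergence for each fixed $\xi$, allowing a standard transfer of measurability from the finite-dimensional approximations to the limit.
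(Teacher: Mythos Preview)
Your argument is correct and, for existence, uniqueness, the a priori bound, and Lipschitz continuity, essentially identical to the paper's: both use strong monotonicity with constant $\kappa_{\min}$, the uniform bound on $\B_1(\xi)$ coming from $r(\xi)\geq r_{\min}$, and the Browder--Minty theorem, then subtract equations and test with the difference to get the Lipschitz estimate pointwise in $\xi$.

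The one place you diverge is measurability of $\xi \mapsto \widetilde{S}(y)(\xi)$. The paper dispatches this in a single line by invoking Filippov's theorem (a measurable-selection result for the solution map of a parameterized equation with \Caratheodory\ data), whereas you propose a Galerkin route: show the finite-dimensional approximants are measurable and pass to the limit using that strong monotonicity forces full-sequence convergence. Both are valid. Your approach is more self-contained and avoids importing a measurable-selection theorem, at the cost of setting up and justifying the Galerkin scheme; the paper's citation is shorter but relies on recognizing that the hypotheses of Filippov's theorem are met (in particular, that the data $a$, $b$, $r$ render the equation a \Caratheodory-type map). Either choice is fine here since the solution is unique, so no genuine selection issue arises.
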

Let $\ssp$ be a reflexive Banach space.
We recall that an operator $A : \ssp \to \ssp^*$
is $\kappa$-strongly monotone if there exists $\kappa > 0$
such that
\[
\dualp[\ssp]{A(u_2)-A(u_1)}{u_2-u_1}
\geq \kappa \norm[\ssp]{u_2-u_1}^2\quad
\text{for all}\quad  u_1,  \, u_2 \in \ssp.
\]
\begin{proof}[{Proof of \Cref{lem:pde}}]
For each $\xi \in \Xi$, $\A(\cdot,\xi)$
is $\kappa_{\min}$-strongly monotone
and it holds that
\[\norm[\spL{H^1(\domain)^*}{H^1(\domain)^*}]
{\B_1(\xi)} \leq 1/\min\{r_{\min},1\};
\]
cf.\ \cite[p.\ 13]{Kouri2020a}.
The existence, uniqueness and the stability estimate
\[
\norm[H^1(\domain)]{\widetilde{S}(y)(\xi)}
\leq (1/\kappa_{\min}) \norm[H^1(\domain)^*]{\B_1(\xi) y} +
(1/\kappa_{\min}) \norm[H^1(\domain)^*]{\rhs(\xi)}
\]
are a consequence of
the Minty--Browder theorem \cite[Thm.\ A.26]{Zeidler1990},
for example. Using Filippov's theorem \cite[Thm.\ 8.2.10]{Aubin2009},
we can show that $\widetilde{S}(y)$ is measurable.
Combined with the stability estimate and H\"older's inequality,
we conclude that
$\widetilde{S}(y) \in \bochner{q}{H^1(\domain)}$
for each $q \in [1,\infty]$ and $y \in H^1(\domain)^*$.
Since  for all 	$y_1$, $y_2 \in H^1(\domain)^*$ and $\xi \in \Xi$,
we have (cf.\ \cite[eq.\ (3.7)]{Kouri2020a})
\[
\norm[\ssp]{\widetilde{S}(y_2)(\xi)-\widetilde{S}(y_1)(\xi)}
\leq (1/\kappa_{\min}) \norm[H^1(\domain)^*]{\B_1(\xi)[y_2-y_1]}
,
\]
the mapping 
$(y,\xi) \mapsto \widetilde{S}(y)(\xi)$
is a \Caratheodory\ mapping, and
$\widetilde{S} : H^1(\domain)^* \to \bochner{q}{H^1(\domain)}$
is Lipschitz continuous for all $q \in [1,\infty]$.
\end{proof}

The function $\wp$ defined by
$\wp(z) \coloneqq (\alpha/2)\norm[L^2(\domain)]{z}^2$
is an R-function according to \Cref{lem:rfunction}, as $\alpha > 0$
and $L^2(\domain)$ is a Hilbert space and hence
has the Radon--Riesz property \cite[Prop.\ 2.35]{Bonnans2000}.
The operator $\B \coloneqq \iota^*$ is linear and completely continuous
because $\iota$ is a compact operator by the Sobolev embedding theorem.
We define  $\pobj : H^1(\domain)^* \times \Xi \to [0,\infty)$ by
$\pobj(y,\xi) \coloneqq
(1/2)\norm[L^2(\domain)]{\maxo{1-\iota \widetilde{S}(y)(\xi)}}^2$.
The mapping $\mathcal{J} : L^{q}(\Xi, \cA, \mathbb{P}; H^1(\domain)) \to 
L^{q/2}(\Xi, \cA, \mathbb{P})$ 
given by $\mathcal{J}(u) \coloneqq 
(1/2)\norm[L^2(\domain)]{\maxo{1-\iota u}}^2$ is continuous
for $q \in [2,\infty)$ \cite[Proposition 5]{Kouri2020}.
\Cref{lem:pde} ensures that
$\mathcal{J} \circ \widetilde{S} \colon H^1(\domain)^* \to 
L^{q/2}(\Xi, \cA, \mathbb{P})$
is well-defined and  continuous for $q \in [2,\infty)$, 
yielding the continuity of $F$ with $p = q/2$.
Having verified \Cref{assumption:strongconsistency}
for $p \in [2,\infty)$, we can apply
\Cref{thm:consistency} to study the consistency of
empirical approximations of \eqref{eq:ocpspde}.

\subsection{Risk-Averse Optimization with  Variational Inequalities}
\label{subsect:ocpsvi}

We consider a risk-averse optimization problem governed
by an  elliptic  variational inequality with random inputs.
Our presentation  is inspired by that in \cite{Hertlein2022}.
We consider
\begin{align}
	\label{eq:ocpvarineq}
	\min_{z \in \adcsp} \,
	(1/2) \Risk{\norm[L^2(\domain)]{\iota S(z)-u_d}^2}
	+ (\alpha/2) \norm[L^2(\domain)]{z}^2,
\end{align}
where $\alpha > 0$, $u_d \in L^2(\domain)$,
$\iota : H_0^1(\domain) \to L^2(\domain)$
is the embedding operator of the compact embedding
$H_0^1(\domain) \embedding L^2(\domain)$,
and $\adcsp$ is as in \Cref{subsect:ocpspde}. For each
$(z,\xi) \in L^2(\domain) \times \Xi$, $S(z)(\xi) \in H_0^1(\domain)$ is the
solution to the parameterized elliptic variational inequality:
\begin{align}
	\label{eq:varineq}
	\text{find} \quad u \in K_\psi \colon \quad
	\dualpHzeroone[\domain]{A(\xi)u-\iota^* z}{v-u} \geq 0
	\quad \text{for all} \quad
	v \in K_\psi,
\end{align}
where $\iota^*$ is the adjoint operator to $\iota$,
$H^{-1}(\domain) \coloneqq H_0^1(\domain)^*$,
$A : \Xi \to \spL{H_0^1(\domain)}{H^{-1}(\domain)}$
is a parameterized elliptic operator,
and
$K_\psi \coloneqq \{\, u \in H_0^1(\domain) \colon \, 
u(x) \geq \psi(x) \text{ for a.e. } x \in \domain\,\}$
with $\psi \in H^1(\domain)$
and $\psi_{\partial \domain} \leq 0$ is the  obstacle.
The set $K_\psi$ is nonempty \cite[p.\ 129]{Surowiec2018}.
For $(y,\xi) \in H^{-1}(\domain) \times \Xi$,
we also consider the auxiliary parameterized elliptic variational inequality:
\begin{align}
	\label{eq:auxvarineq}
	\text{find} \quad u \in K_\psi \colon \quad
	\dualpHzeroone[\domain]{A(\xi)u-y}{v-u} \geq 0
	\quad \text{for all} \quad
	v \in K_\psi.
\end{align}
If $\widetilde{S}(y)(\xi)$ with $y = \iota^*z$ is a solution to
\eqref{eq:auxvarineq}, then it is a solution to \eqref{eq:varineq}.

We assume that $A : \Xi \to \spL{H_0^1(\domain)}{H^{-1}(\domain)}$
is uniformly measurable, that is,
there exists a sequence $A_k : \Xi \to \spL{H_0^1(\domain)}{H^{-1}(\domain)}$
of simple mappings such that $A_k(\xi) \to A(\xi)$
in $\spL{H_0^1(\domain)}{H^{-1}(\domain)}$
as $k \to \infty$ for each $\xi \in \Xi$. Moreover, we assume that
there exist
constants $\kappa_{\min}$, $\kappa_{\max}  > 0$
such that for each $\xi \in \Xi$, $A(\xi)$ is $\kappa_{\min}$-strongly monotone
and $\norm[\spL{H_0^1(\domain)}{H^{-1}(\domain)}]{A(\xi)} \leq \kappa_{\max}$.
Under these conditions,
the auxiliary variational inequality \eqref{eq:varineq} has a unique
solution $\widetilde{S}(y)(\xi)$
for each $(y,\xi) \in H^{-1}(\domain) \times \Xi$,
and  $\widetilde{S}(\cdot)(\xi)$ is Lipschitz continuous
with Lipschitz constant $1/\kappa_{\min}$
for each $\xi \in \Xi$; cf.\ \cite[Thm.\ 7.3]{Hertlein2022}.
Using results established in \cite[p.\ 180]{Gwinner2022}, we can show that
$\widetilde{S}(y) \in \bochner{q}{H_0^1(\domain)}$
for all $q \in [1,\infty]$ and $y \in H^{-1}(\domain)$.
Combined with the Lipschitz continuity, we find that
$\widetilde{S} : H^{-1}(\domain) \to  \bochner{q}{H_0^1(\domain)}$
is continuous for each $q \in [1,\infty]$.

We express \eqref{eq:ocpvarineq} in the form given in
\eqref{eq:gen-risk-op} and verify \Cref{assumption:strongconsistency}.
The function $\wp$ defined by
$\wp(z) \coloneqq (\alpha/2)\norm[L^2(\domain)]{z}^2$
is an R-function; see \Cref{subsect:ocpspde}.
The operator $\B \coloneqq \iota^*$ is linear and completely continuous
because $\iota$ is a compact operator.
We define  $\pobj : H^{-1}(\domain) \times \Xi \to [0,\infty)$ by
$\pobj(y,\xi) \coloneqq (1/2)\norm[L^2(\domain)]{\iota \widetilde{S}(y)(\xi) -u_d}^2$.
The mapping 
$\mathcal{J} : L^{q}(\Xi, \cA, \mathbb{P}; H_0^1(\domain)) \to 
L^{q/2}(\Xi, \cA, \mathbb{P})$ given by 
$\mathcal{J}(u) \coloneqq (1/2)\norm[L^2(\domain)]{\iota u -u_d}^2$ is continuous for $q \in [2,\infty)$; cf.\
\cite[Example 3.2 and Theorem 3.5]{Kouri2018a}. 
Combined with the continuity of $\widetilde{S}$,
we find that
$\mathcal{J} \circ \widetilde{S}
\colon H^{-1}(\domain) \to L^{q/2}(\Xi, \cA, \mathbb{P})$
is well-defined and  continuous for $q \in [2,\infty)$, yielding the continuity
of $F$ with $p = q/2$.
Having verified \Cref{assumption:strongconsistency}
for $p \in [1,\infty)$, we can apply
\Cref{thm:consistency}, which in turn yields the consistency of
empirical approximations of \eqref{eq:ocpvarineq}.

\section{Conclusion}\label{sec:conclusion}
We have seen that consistency results, in particular,
norm consistency of empirical minimizers for nonconvex, risk-averse stochastic optimization problems involving infinite dimensional decision spaces are in fact available.  The central property on which the entire discussion depends is the ability to draw compactness from the structure of the objective function. As the examples illustrate, this is much more the rule rather than the exception. In fact, even in examples such as topology optimization, \cite{MPBendsoe_OSigmund_2003}, where the decision variable enters the PDE in a nonlinear fashion, the required use of either filters or other regularization strategies, see e.g.\ \cite{BSLazarov_OSigmund_2011,Sigmund2013}, also provides compactness. 

There remain many open challenges. These include applications to multistage or dynamic problems, large deviation results for optimal values
and solutions, and central limit theorems. In many instances, the known techniques are limited by nonsmoothness of the risk measure $\risk$ and the infinite dimensional decision spaces. However, the main result in this text,
\Cref{thm:consistency}, is a first major step and an essential tool towards
verifying the convergence of numerical optimization methods that make use of empirical approximations.
Moreover, for numerical computations, the decision spaces
of infinite dimensional  risk-averse optimization problems must
typically be discretized. Therefore, in a practical setting, these problems
have the additional challenge that the numerically computed estimators are generally dependent on both the sample size $N$ and additional 
spacial discretization parameters. As initial contributions 
for risk-neutral PDE-constrained problems   
\cite{MHoffhues_WRoemisch_TMSurowiec_2021,Milz2022c}
demonstrate, 
an infinite dimensional consistency analysis provides an important component in the numerical analysis of these challenging optimization problems.

\appendix 
	\section{Law of Large Numbers for Risk Functionals}
	
	We generalize the epigraphical law of large numbers for law invariant
	risk function established in Theorem~3.1 in
	\cite{Shapiro2013} to allow for
	random lower semicontinuous functions defined on complete, separable
	metric spaces instead of $\real^n$. 
	The proof of Theorem~3.1 provided in \cite{Shapiro2013}
	generalizes to this more general setting with
	only a few notational changes needed.
	Nevertheless, we verify the liminf-condition
	of epiconvergence using ideas from the proof
	of Proposition~7.1  in \cite{Royset2020}.
	The limsup-condition is established as in  \cite{Shapiro2013}.

	\begin{assumption}
		\label{assumption:llnrm}
		\normalfont
		Let
		$(\Omega, \cF, P)$ be a  nonatomic, complete
		probability space,
		and let $(\Theta, \Sigma, \mathbb{M})$ be
		a complete probability space.
		Let $\zeta : \Omega \to \Theta$ be a random element 
		with distribution $\mathbb{M}$ and let
		$\zeta^1, \zeta^2, \ldots$ defined on 
		a complete probability space
		$(\Omega^\prime, \cF^\prime, P^\prime)$
		be independent identically distributed $\Theta$-valued random elements
		each having the same distribution as that of $\zeta$.
		Let $(V, d_V)$ be a complete, separable metric space and let
		$1 \leq p < \infty$.
		\begin{enumerate}[label=($\mathfrak{R}${\arabic*})]
			\item
			\label{itm:llnR2}
			The function $\Psi: V \times \Theta
			\to \real$ is random lower semicontinuous.
			\item
			\label{itm:llnR1}
			For each
			$v \in V$, $\Psi_v(\cdot) \coloneqq
			\Psi(v, \cdot) \in L^p(\Theta, \Sigma, \mathbb{M})$.
			\item
			\label{itm:llnR3}
			For each $\bar v \in V$, there exists a neighborhood
			$\mathcal{V}_{\bar v} \subset V$ of $\bar v$
			and a random variable $h \in L^p(\Theta, \Sigma, \mathbb{M})$
			such that $\Psi(v, \cdot) \geq h(\cdot)$
			for all $v \in \mathcal{V}_{\bar v}$.
		\end{enumerate}
	\end{assumption}
	
	\Cref{thm:lln31} is as Theorem~3.1  in \cite{Shapiro2013} but allows for
	complete, separable metric spaces $V$ instead of $\real^n$.
	Let $\rho : \Lp{p} \to \real$ be a law invariant
	risk measure and let \Cref{assumption:llnrm} hold true.
	Let $v \in V$ and let 
	$\hat{H}_{v,N}(\cdot; \omega^\prime)$
	be the empirical distribution function
	of $\Psi(v, \zeta^1(\omega^\prime)), \ldots, 
	\Psi(v,\zeta^N(\omega^\prime))$.
	Moreover, let $\hat{H}_{v,N}^{-1}(\cdot; \omega^\prime)$
	be its quantile function. 
	We define $\hat{\phi}_N : V \times \Omega^\prime \to \real$
	and $\phi : V \to \real$ by
	$\hat{\phi}_N(v,\omega^\prime) \coloneqq 
	\rho(\hat{H}_{v,N}^{-1}(G(\cdot);\omega^\prime))$
	and $\phi(v) \coloneqq \rho(\Psi_v(\zeta)) = 
	\rho(\Psi_v(\zeta(\cdot)))
	$.
	Here $G \colon \Omega \to [0,1]$ is a random variable
	with uniform distribution $\nu$ as discussed in \Cref{ssec:spaces}.
	We often omit writing the second argument of $\hat{\phi}_N$.
	
	\begin{theorem}
		\label{thm:lln31}
		If \Cref{assumption:llnrm} holds and
		$\rho : \Lp{p} \to \real$
		is a law invariant, convex risk measure,
		then $\phi$ is lower semicontinuous
		and finite-valued, and $\hat{\phi}_N$
		epiconverges to $\phi$ \wpone as $N \to\infty$.
	\end{theorem}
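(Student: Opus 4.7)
I propose the following plan for proving \Cref{thm:lln31}. Finite-valuedness of $\phi$ is immediate: the integrability condition in \Cref{assumption:llnrm} gives $\Psi_v \in \Lp{p}$, and $\rho$ maps $\Lp{p}$ into $\real$. Lower semicontinuity of $\phi$ will arise as a by-product of the liminf condition below, applied with the trivial constant sequence $\hat\phi_N \equiv \phi$. The main task is therefore to establish the two epiconvergence conditions on a single event of full measure, for which separability of $V$ will be essential.

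The engine of both conditions is a pointwise strong law of large numbers for $\rho$: for every fixed $v \in V$, $\rho(\hat H_{v,N}) \to \rho(\Psi_v)$ \wpone. Using the nonatomicity of $(\Omega, \cF, P)$, construct a uniform random variable $Q : \Omega \to [0,1]$ independent of the $\zeta^i$; law invariance then identifies $\phi(v) = \rho(F_{\Psi_v}^{-1}(Q))$ and $\hat\phi_N(v,\omega) = \rho(\hat H_{v,N,\omega}^{-1}(Q))$. The classical SLLN, combined with $\Psi_v \in \Lp{p}$, yields the $\Lp{p}$-strengthening $\hat H_{v,N}^{-1}(Q) \to F_{\Psi_v}^{-1}(Q)$ in $\Lp{p}$ \wpone of Glivenko--Cantelli, after which continuity of the convex risk measure $\rho$ on $\Lp{p}$ \cite[Cor.\ 3.1]{Ruszczynski2006b} closes the argument. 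With this in hand, the limsup condition is handled by the constant candidate $v_N \equiv v$: choose a countable dense $D \subset V$ so that the pointwise SLLN holds simultaneously on $D$ on a full-measure set $\Omega_0$, and for $v \in V \setminus D$ diagonalize along $v^{(m)} \in D$ with $v^{(m)} \to v$, using the minorant bound derived in the liminf step to control $\limsup_m \phi(v^{(m)})$ from above and produce a deterministic $v_N \to v$ with $\limsup_N \hat\phi_N(v_N,\omega) \leq \phi(v)$ on $\Omega_0$.

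For the liminf condition, fix $v \in V$ and a deterministic sequence $v_N \to v$. Following Proposition~7.1 in \cite{Royset2020}, introduce the minorants
\[
g_m(\xi) \coloneqq \inf\{\,\Psi(u,\xi) : u \in V,\ d_V(u,v) < 1/m\,\}, \qquad m \in \natural.
\]
By separability of $V$ and lsc of $\Psi(\cdot,\xi)$, the infimum coincides with one over a countable dense subset of the $1/m$-ball about $v$, which makes $g_m$ jointly measurable. For $m$ so large that the $1/m$-ball is contained in the neighborhood $\mathcal{V}_v$ of the third condition in \Cref{assumption:llnrm}, $h \leq g_m \leq \Psi(v,\cdot)$, so $g_m(\zeta(\cdot)) \in \Lp{p}$; the lsc of $\Psi(\cdot,\xi)$ in $u$ forces $g_m(\xi) \nearrow \Psi(v,\xi)$, and dominated convergence lifts this to $g_m(\zeta) \to \Psi_v$ in $\Lp{p}$.

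For $N$ so large that $v_N$ lies in the $1/m$-ball, $\Psi(v_N,\zeta^i) \geq g_m(\zeta^i)$ for each sample index $i$. This componentwise inequality forces $\hat H_{v_N,N}(t) \leq \hat G_{m,N}(t)$ pointwise in $t$, i.e., first-order stochastic dominance, where $\hat G_{m,N}$ denotes the empirical CDF of the $g_m(\zeta^i)$. Coupling via the uniform $Q$ above, the monotonicity and law invariance of $\rho$ then yield $\rho(\hat H_{v_N,N}) \geq \rho(\hat G_{m,N})$. Applying the preliminary SLLN to $g_m$ gives $\rho(\hat G_{m,N}) \to \rho(g_m(\zeta))$ \wpone, so $\liminf_N \hat\phi_N(v_N) \geq \rho(g_m(\zeta))$ on a full-measure set. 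Sending $m \to \infty$ and invoking $\Lp{p}$-continuity of $\rho$ delivers $\liminf_N \hat\phi_N(v_N) \geq \phi(v)$, and specializing to $\hat\phi_N \equiv \phi$ supplies the lsc of $\phi$ needed in the limsup step. The principal technical obstacles are the $\Lp{p}$-strengthening of Glivenko--Cantelli underlying the preliminary SLLN (weak convergence of CDFs does not lift to $\Lp{p}$-convergence of quantile transforms without the $\Lp{p}$-integrability hypothesis) and the patching of $v$-dependent almost-sure events into a single full-measure event, for which the separability of $V$ together with a diagonal extraction along a countable dense subset is essential.
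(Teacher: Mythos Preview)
Your liminf argument follows the same blueprint as the paper's (both draw on Proposition~7.1 in \cite{Royset2020}), and your identification of the pointwise risk SLLN as the engine is spot-on; the paper simply cites it as \Cref{thm:lln21}. One loose end: your minorants $g_m$ are centered at the specific point $v$, so the almost-sure event on which $\rho(\hat G_{m,N}) \to \rho(g_m(\zeta))$ holds is $v$-dependent. You flag this but do not actually resolve it. The paper's fix is to index the minorants by pairs $(z,r)$ with $z$ ranging over a fixed countable dense set $\mathcal E \subset V$ and $r$ over the nonnegative rationals; there are then only countably many minorants, a single full-measure event $\Omega_0$ suffices, and for an arbitrary $v$ and sequence $v_N \to v$ one selects nested balls $B(z^\ell, r_\ell) \ni v_N$ with $z^\ell \in \mathcal E$, $z^\ell \to v$, $r_\ell \to 0^+$.

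The genuine gap is in your limsup argument. Diagonalizing along a generic dense set $D$ yields at best $\limsup_N \hat\phi_N(v_N,\omega) \leq \limsup_m \phi(v^{(m)})$, so you then need $\limsup_m \phi(v^{(m)}) \leq \phi(v)$ for some sequence $v^{(m)} \to v$ in $D$. Lower semicontinuity of $\phi$ only yields $\liminf_m \phi(v^{(m)}) \geq \phi(v)$, and the minorant bound from the liminf step likewise produces \emph{lower} bounds; your claim that it ``controls $\limsup_m \phi(v^{(m)})$ from above'' is backwards. An arbitrary countable dense $D$ need not contain any sequence along which $\phi(v^{(m)}) \to \phi(v)$ when $\phi$ is merely lsc. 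The paper closes this gap by invoking a lemma of Zervos \cite[Lem.\ 3]{Zervos1999}: for a finite-valued lower semicontinuous $\phi$ on a separable metric space there exists a countable $\mathcal D$ such that every $v$ admits $(v_k)\subset \mathcal D$ with $v_k \to v$ \emph{and} $\phi(v_k) \to \phi(v)$. With this $\mathcal D$ (not an arbitrary dense set) the diagonalization goes through on a single full-measure event.
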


	Before establishing \Cref{thm:lln31}, we formulate a
	law of large numbers with respect to Mosco-epiconvergence.
	\begin{corollary}
		\label{cor:lln31}
		Let $Y_0 \subset Y$ be a closed subset
		of a separable Banach space $Y$ and let 
		$W_0$ be a nonempty, closed, convex
		subset of a reflexive, separable Banach space $W$.
		Let the hypotheses of \Cref{thm:lln31} hold
		with $V = Y_0$. Suppose that
		$\B: W \to Y$ is linear and completely continuous
		with $\B(W_0) \subset Y_0$.
		Then $\hat{\phi}_N \circ \B : W_0 \to \real$ Mosco-epiconverges to
		$\phi \circ \B : W_0 \to \real$ \wpone as $N \to\infty$.
	\end{corollary}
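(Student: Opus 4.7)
The plan is to reduce the Mosco-epiconvergence claim to \Cref{prop:moscoepiconvergence} by producing the two epiconvergence inputs that proposition requires: that $\hat\phi_N$ epiconverges to $\phi$ on $Y_0$ \wpone, and that $\hat\phi_N \circ \B$ epiconverges to $\phi \circ \B$ on $W_0$ \wpone. Once both hold, intersecting their respective events of full measure places us, pointwise on a set of probability one, in the setting of \Cref{prop:moscoepiconvergence}, whose conclusion is exactly the sought Mosco-epiconvergence on $W_0$.

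The first ingredient is a direct application of \Cref{thm:lln31} under the standing hypothesis with $V = Y_0$. For the second, I would invoke \Cref{thm:lln31} a second time, now with $V = W_0$ and the pulled-back integrand $\tilde\Psi : W_0 \times \Theta \to \real$ defined by $\tilde\Psi(w,\theta) \coloneqq \Psi(\B w, \theta)$. Since $W_0$ is a closed subset of the separable, reflexive Banach space $W$, it is a complete separable metric space under the norm, so the abstract framework of \Cref{thm:lln31} is available. Because $\B$ is completely continuous it is in particular (norm-to-norm) continuous and hence Borel measurable; composing with the jointly measurable $\Psi$ yields joint measurability of $\tilde\Psi$, and composing the continuous $\B$ with the lower semicontinuous $\Psi(\cdot,\theta)$ yields lower semicontinuity of $\tilde\Psi(\cdot,\theta)$. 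Since $\B w \in Y_0$ for every $w \in W_0$, the pointwise $L^p$ hypothesis on $\Psi$ delivers $\tilde\Psi(w,\zeta(\cdot)) \in \Lp{p}$. For the local integrable minorant condition, I fix $\bar w \in W_0$, set $\bar y \coloneqq \B \bar w \in Y_0$, and pull back through $\B$ the neighborhood $\mathcal{V}_{\bar y}$ of $\bar y$ in $Y_0$ and the dominating $h \in \Lp{p}$ furnished by the hypothesis: the continuity of $\B$ makes $\B^{-1}(\mathcal{V}_{\bar y}) \cap W_0$ a neighborhood of $\bar w$ in $W_0$ on which $\tilde\Psi(w,\zeta(\cdot)) \geq h(\cdot)$.

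Intersecting the two probability-one events on which the above epiconvergences hold gives a set of full measure; on each $\omega$ in it the hypotheses of \Cref{prop:moscoepiconvergence} are satisfied with $\csp = W$, $\csp_0 = W_0$, $\fun_N = \hat\phi_N(\cdot)(\omega)$, and $\fun = \phi$, yielding the claimed Mosco-epiconvergence. The main obstacle is the verification of \Cref{assumption:llnrm} for the composed integrand $\tilde\Psi$, in particular the construction of the local integrable minorant on $W_0$ by pulling the one on $Y_0$ back through $\B$; the remainder is a bookkeeping combination of \Cref{thm:lln31} and \Cref{prop:moscoepiconvergence}.
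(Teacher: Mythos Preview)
Your proposal is correct and follows essentially the same route as the paper: apply \Cref{thm:lln31} once on $Y_0$ and once on $W_0$ with the pulled-back integrand $\Psi(\B\cdot,\cdot)$, then feed both epiconvergence statements into \Cref{prop:moscoepiconvergence}. The paper's own proof is terser---it simply notes that $W_0$ is a complete separable metric space and $\B$ is continuous so \Cref{thm:lln31} applies again---whereas you spell out in detail the verification of \Cref{assumption:llnrm} for the composed integrand, including the pull-back of the local integrable minorant through $\B$.
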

	\begin{proof}
	\Cref{thm:lln31} ensures that
	$\hat{\phi}_N$
	epiconverges to $\phi$ \wpone as $N \to\infty$.
	Since $W_0$ defines a complete separable metric space, 
	$\B(W_0) \subset Y_0$,
	and $\B$ is continuous, \Cref{thm:lln31} further ensures that
	$\hat{\phi}_N \circ \B$
	epiconverges to $\phi \circ \B$ \wpone as $N \to\infty$.
	Combined with \Cref{prop:moscoepiconvergence}
	and the complete continuity of $\B$, we conclude
	that $\hat{\phi}_N \circ \B$ Mosco-epiconverges to
	$\phi \circ \B$ \wpone as $N \to\infty$.
	\end{proof}

	As already mentioned, the proof of \Cref{thm:lln31}
	presented in \cite[Thm.\ 3.1]{Shapiro2013}
	for $V = \real^n$ can be generalized to the above setting
	without much effort.
	A key result for establishing \Cref{thm:lln31}
	is \Cref{thm:lln21}.
	To formulate \Cref{thm:lln21}, let $X \in L^p(\Omega,\cF,P)$
	be a random variable and $X_1, X_2, \ldots$ defined on
	a complete probability space
	$(\Omega^\prime, \cF^\prime, P^\prime)$ be independent identically distributed
	real-valued random variables each having the same distribution
	as that of $X$. Moreover, let
	$\hat{H}_{N}(\cdot; \omega^\prime)$ be the empirical distribution function
	of the sample $X_1(\omega^\prime), \ldots, X_N(\omega^\prime)$,
	and  $\hat{H}_N^{-1}(\cdot; \omega^\prime)$ be its quantile function. 
	
	\begin{theorem}[{see \cite[Thm.\ 2.1]{Shapiro2013} and
			\cite[Thm.\ 9.65]{Shapiro2021}}]
		\label{thm:lln21}
		If $(\Omega, \cF, P)$ is complete and nonatomic,
		$1 \leq p < \infty$, and
		$\rho : \Lp{p} \to \real$
		is a law invariant, convex risk measure,
		then 
		$\rho(\hat{H}_N)$ converges to 
		$\rho(X)$ \wpone as
		$N \to\infty$.
	\end{theorem}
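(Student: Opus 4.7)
The plan is to reduce the desired almost-sure convergence $\rho(\hat H_N) \to \rho(X)$ to an $L^p$-convergence of explicitly constructed random variables on $(\Omega,\cF,P)$, and then invoke norm continuity of $\rho$ on $\Lp{p}$. The enabling facts are (i) every finite-valued convex risk measure on $\Lp{p}$ is norm continuous (the Ruszczyński--Shapiro result already used in \Cref{prop:emp-Rylsc}), and (ii) since $(\Omega,\cF,P)$ is complete and nonatomic, there exists a measurable $U : \Omega \to [0,1]$ with $P \circ U^{-1} = \nu$, the uniform law on $[0,1]$ (Prop.~A.5 in Dudley).

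First I would transport $X$ and the empirical distribution to $\Lp{p}$ via quantile functions. Let $F_X^{-1}$ denote the left-continuous quantile function of $X$ and let $\hat H_N^{-1}$ denote the sample-dependent empirical quantile function, which in its piecewise-constant form $\hat H_N^{-1}(q) = X_{(j)}$ for $q \in ((j-1)/N, j/N]$ depends measurably on $\omega$. Define $Y \coloneqq F_X^{-1}(U)$ and $Y_N \coloneqq \hat H_N^{-1}(U)$; the classical pushforward identity (Prop.~9.1.2 in Dudley 2002) gives $Y \sim X$ and $Y_N \sim \hat H_N$. Law invariance of $\rho$ then yields $\rho(X) = \rho(Y)$ and $\rho(\hat H_N) = \rho(Y_N)$, so it suffices to prove $\rho(Y_N) \to \rho(Y)$ almost surely.

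The core step is to establish $Y_N \to Y$ in $\Lp{p}$ almost surely. By change of variables and $P \circ U^{-1} = \nu$,
\begin{equation*}
\int_\Omega |Y_N - Y|^p \du P
= \int_{[0,1]} |\hat H_N^{-1}(q) - F_X^{-1}(q)|^p \,\du\nu(q),
\end{equation*}
so it suffices to prove $\hat H_N^{-1} \to F_X^{-1}$ in $L^p([0,1],\nu)$ almost surely. Classical Glivenko--Cantelli combined with monotonicity yields $\hat H_N^{-1}(q) \to F_X^{-1}(q)$ a.s.\ at every continuity point of $F_X^{-1}$; since $F_X^{-1}$ is monotone its discontinuity set is countable and hence $\nu$-null, so the convergence holds $\nu$-a.e.\ almost surely. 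I would upgrade this to $L^p$-convergence using the SLLN applied to $|X_i|^p$:
\begin{equation*}
\int_{[0,1]} |\hat H_N^{-1}(q)|^p \du\nu(q)
= \frac{1}{N}\sum_{i=1}^N |X_i|^p
\;\longrightarrow\; \cE{|X|^p}
= \int_{[0,1]} |F_X^{-1}(q)|^p \du\nu(q) \quad \text{a.s.,}
\end{equation*}
so that $\nu$-a.e.\ pointwise convergence together with convergence of $L^p$-norms yields $L^p([0,1],\nu)$-convergence (Scheffé/Vitali, applied $\omega$-wise). Norm continuity of $\rho$ on $\Lp{p}$ then delivers $\rho(Y_N) \to \rho(Y)$ almost surely, which by law invariance is exactly $\rho(\hat H_N) \to \rho(X)$ \wpone.

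The main obstacle I anticipate is this $L^p$-upgrade of the quantile convergence. Pointwise a.s.\ convergence of empirical quantiles is textbook, but promoting it to $L^p([0,1],\nu)$ requires marrying the SLLN-based identity of $p$th moments with a uniform-integrability or Scheffé argument, and this is precisely where the integrability hypothesis $X \in \Lp{p}$ is essential. A secondary but nontrivial input is norm continuity of a finite-valued convex risk measure on $\Lp{p}$, which rests on an extended Namioka--Klee theorem; together with the nonatomicity assumption that permits construction of the uniform transport $U$, these are the two places where the stated hypotheses genuinely enter the argument.
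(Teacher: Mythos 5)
The paper does not prove this theorem itself; it is quoted directly from \cite[Thm.~2.1]{Shapiro2013} and \cite[Thm.~9.65]{Shapiro2021}, and your argument is a correct reconstruction of that cited proof: quantile transport through a uniform random variable supplied by nonatomicity, almost-sure $L^p([0,1],\nu)$-convergence of the empirical quantile functions obtained from pointwise quantile convergence together with the strong law for $|X_i|^p$ and the Riesz--Scheff\'e lemma, and norm continuity of finite-valued convex risk measures. The only point to tidy in a careful write-up is the double role of $\omega$ in ``$Y_N \coloneqq \hat H_N^{-1}(U)$'': the sample path $\omega$ that fixes $\hat H_{N,\omega}$ must be kept distinct from the auxiliary variable over which $\hat H_{N,\omega}^{-1}(U(\cdot))$ is integrated, exactly as is done in \Cref{prop:emp-Rylsc}.
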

	
	\begin{proof}
	We present a proof somewhat different from that in 
	\cite{Shapiro2013}. Fix $\omega^\prime \in \Omega^\prime$.
	Using a change of variables and the fact that
	$G \colon \Omega \to [0,1]$ has uniform distribution $\nu$, we obtain
	\begin{align*}
		\int_{\Omega} |\hat{H}_{N}^{-1}(G(\omega); \omega^\prime)-
		H_X^{-1}(G(\omega))|^p \du P(\omega)
		&= 
		\int_{0}^1 |\hat{H}_{N}^{-1}(q; \omega^\prime)-
		H_X^{-1}(q)|^p \du P\circ G^{-1}(q)
		\\
		&= 
		\int_{0}^1 |\hat{H}_{N}^{-1}(q; \omega^\prime)-
		H_X^{-1}(q)|^p \du \nu (q).
	\end{align*}
	Since $1 \leq p < \infty$, $X \in L^p(\Omega,\cF, P)$
	and $X_1, X_2, \ldots$ defined on $(\Omega^\prime, \cF^\prime, P^\prime)$ are independent 
	identically distributed each with the same
	distribution as that of $X$,
	the latter integral converges
	$P^\prime$-almost surely to zero as $N \to \infty$;
	see \cite[Cor.\ on p.\ 48]{Rachev1985},
	\cite[Cor.\ 3]{Rachev1982a},
	\cite[Cor.\ 3 on p.\ 666]{Rachev1985a}.
	Since $\rho \colon \Lp{p} \to \real$ is a real-valued
	convex risk measure, it is continuous
	\cite[Cor.\ 3.1]{Ruszczynski2006b}. We obtain
	for almost every $\omega^\prime \in \Omega^\prime$,
	$
	\rho(\hat{H}_{N}^{-1}(G(\cdot); \omega^\prime))
	\to \rho(H_X^{-1}(G(\cdot)))
	$ as $N \to \infty$.
	Combined with $\rho(X) = \rho(H_X^{-1}(G(\cdot)))$ and
	$\rho(\hat{H}_N) = \rho(\hat{H}_N^{-1}(G(\cdot)))$,
	we obtain the assertion. 
	\end{proof}

	\begin{proof}[{Proof of \Cref{thm:lln31}}]
	The fact that $\phi$ is finite-valued
	and lower semicontinuous can be established
	as in the proof of Theorem~3.1 in \cite{Shapiro2013}.
	To establish the epiconvergence, we make use of the constructions
	made in the proof of Proposition~7.1 in \cite{Royset2020}.
	Proposition~7.1 in \cite{Royset2020} establishes epiconvergence
	in case that $\rho(\cdot)  = \cE{\cdot}$, but without assuming
	$(\Omega,\cF,P)$ be nonatomic.
	Let $\mathcal{E} \subset V$
	be a countable dense subset of $V$ and
	$Q_+$ be the nonnegative rational numbers. For $v \in 
	V$
	and $r \in [0,\infty)$, we define 
	$\pi_{v,r}$ on $\Theta$ by
	\begin{align*}
		\pi_{v,r}(\theta) \coloneqq \inf_{w\in B(v,r)}\, 
		\Psi(w, \theta)
		\quad \text{if} \quad r > 0
		\quad \text{and} \quad
		\pi_{v,0}(\theta) \coloneqq \Psi(v,\theta)
		\quad \text{if} \quad r = 0,
	\end{align*}
	where $B(v,r) \coloneqq \{w \in V \colon d_V(w,v) < r\}$.
	Theorem~3.4 in \cite{Korf2001}, \ref{itm:llnR2}, and
	\ref{itm:llnR1} ensure that $\pi_{v,r}$ is an
	extended real-valued 
	random variable for each $v \in V$ and $ r \geq 0$.
	Combined with \ref{itm:llnR3}, we find that for every 
	$v \in \mathcal{E}$,
	there exists a neighborhood $\mathcal{V}_v \subset V$ of $v$ and
	$r_v \in (0,\infty)$ such that
	\begin{align*}
		B(v,r_v) \subset \mathcal{V}_v
		\quad \text{and} \quad
		\pi_{v,r}(\cdot) \in \bochnerreal{p}{\Theta, \Sigma, \mathbb{M}}
		\quad \text{for all} \quad r\in [0,r_v] \cap Q_+.
	\end{align*}

	Let $\tilde H_{v,r,N}(\cdot; \omega^\prime)$
	be the empirical distribution function of 
	$\pi_{v,r}(\zeta^i(\omega^\prime))$, $i = 1, \ldots, N$,
	and let $\tilde H_{v,r,N}^{-1}(\cdot; \omega^\prime)$
	be its quantile function.
	For every $v \in \mathcal{E}$ and
	$r \in [0,r_v] \cap Q_+$,
	\Cref{thm:lln21} ensures that
	$\rho(\tilde H_{v,r,N}^{-1}(G;\cdot)) \to 
	\rho(\pi_{v,r}(\zeta))$
	\wpone as $N \to\infty$.
	Since
	$\{(v,r) \colon \, r\in
	[0,r_v] \cap Q_+ , \, v \in \mathcal{E}\}$ is countable,
	there exists $\Omega_0^\prime \subset \Omega^\prime$ with $
	\Omega_0^\prime \in \cF^\prime$
	and $P^\prime(\Omega_0^\prime) = 1$ such that
	\begin{align*}
		\rho(\tilde H_{v,r,N}^{-1}(G;\omega^\prime)) 
		\to \rho(\pi_{v,r}(\zeta))
		\quad \text{as} \quad N \to\infty
		\quad \text{for all} \quad
		\omega^\prime \in \Omega_0^\prime
		\quad \text{and} \quad
		r \in [0,r_v] \cap Q_+,\, v \in \mathcal{E}.
	\end{align*}
	Now, we verify the liminf-condition of epiconvergence.
	Fix $v \in V$ and fix $v_N \to v$ as $N \to\infty$.
	There exist
	$z_{\ell} \in \mathcal{E}$
	with $z_{\ell} \to v$ as $\ell \to \infty$,
	$r_\ell \in (0,r_v] \cap Q_+$ with $r_\ell \to 0$,
	and for each $\ell \in \natural$, there exists
	$\bar{N}(\ell) \in \mathbb{N}$ such that
	\begin{align*}
		v \in B(z_{\ell+1}, r_{\ell+1}) \subset B(z_{\ell}, r_\ell),
		\quad \text{and} \quad
		v_N \in B(z_{\ell}, r_\ell) \quad
		\text{for all} \quad N \geq \bar{N}(\ell).
	\end{align*}

	Fix $\ell \in \mathbb{N}$.
	For all $N \geq \bar{N}(\ell)$ and $\omega^\prime \in \Omega_0^\prime$,
	Theorem~6.50 in \cite{Shapiro2021} when combined
	with the fact that $\rho$ is law
	invariant and monotone ensures
	\begin{align}
		\label{eq:llnlowerbound}
		\hat{\phi}_N(v_N,\omega^\prime) = 
		\rho(\hat H^{-1}_{v_N,N}(G; \omega^\prime)) \geq 
		\rho(\tilde H_{z_{\ell},r_\ell, N}^{-1}(G;\omega^\prime)).
	\end{align}
	Moreover, for all $\omega^\prime \in \Omega_0^\prime$,
	\begin{align}
		\label{eq:llnlowerbound2}
		\rho(\tilde H_{z_{\ell},r_\ell, N}^{-1}(G;\omega^\prime))
		\to \rho(\pi_{z_{\ell},r_\ell}(\zeta))
		\quad \text{as} \quad N \to\infty.
	\end{align}
	Since $v \in 
	B(z_{\ell+1}, r_{\ell+1}) \subset B(z_{\ell}, r_\ell)$,
	we have $\pi_{z_{\ell},r_\ell} \leq 
	\pi_{z_{\ell+1}, r_{\ell+1}} \leq  \pi_{v,0}$.
	For all $\ell \in \mathbb{N}$ and $\theta \in \Theta$,
	the lower semicontinuity
	of $\Psi(\cdot,\theta)$ (see \ref{itm:llnR2})
	ensures
	$\pi_{z_{\ell},r_\ell}(\theta)  \nearrow
	\pi_{v,0}(\theta) = \Psi(v,\theta)$
	as $\ell \to\infty$
	\cite[p.\ 432]{Korf2001}.
	Thus
	$\pi_{v,0}-\pi_{z_{1}, r_{1}} \geq
	\pi_{v,0}-\pi_{z_{\ell+1}, r_{\ell+1}} \geq 0$
	for all $\ell \in \mathbb{N}$.
	Consequently,
	$|\pi_{v,0}-\pi_{z_{1}, r_{1}}|^p \geq
	|\pi_{v,0}-\pi_{z_{\ell+1}, r_{\ell+1}}|^p$.
	Since
	$\pi_{z_{1},r_1}(\zeta)$, $\pi_{v,0}(\zeta) \in  \Lp{p}$,
	the dominated convergence theorem
	implies
	$\pi_{z_{\ell},r_\ell}(\zeta)\to \pi_{v,0}(\zeta)$
	as $\ell \to\infty$ in $\Lp{p}$.
	Using the fact that the risk measure $\rho$ is real-valued and convex,
	it follows that $\rho$ is continuous \cite[Cor.\ 3.1]{Ruszczynski2006b}
	and monotone.
	Consequently,
	$\rho(\pi_{z_{\ell},r_\ell}(\zeta)) \nearrow 
	\rho(\pi_{v,0}(\zeta)) = \phi(v)$
	as $\ell \to\infty$.
	Combined with \eqref{eq:llnlowerbound} and \eqref{eq:llnlowerbound2},
	we find that 
	for all $\omega^\prime \in \Omega_0^\prime$,
	\begin{align*}
		\liminf_{N\to \infty}\,
		\hat{\phi}_N(v_N,\omega^\prime) \geq \phi(v).
	\end{align*}

	Now, we verify the limsup-condition of epiconvergence
	using the arguments in \cite{Shapiro2013}.
	Since $\phi$ is defined on a separable metric space,
	finite-valued and lower semicontinuous, there exists
	a countable set $\mathcal{D} \subset V$ such that
	for each $v \in V$, there exists a sequence
	$(v_k) \subset \mathcal{D}$ such that
	$v_k \to v$ and $\phi(v_k) \to \phi(v)$ as $k \to\infty$
	\cite[Lem.\ 3]{Zervos1999}.
	Since $\mathcal{D}$ is countable, \Cref{thm:lln21} ensures
	the existence of $\Omega_1^\prime \subset \Omega^\prime$
	with $\Omega_1^\prime \in \cF^\prime$ and 
	$P^\prime(\Omega_1^\prime) = 1$ such that
	for each $v \in \mathcal{D}$ and all
	$\omega^\prime \in \Omega_1^\prime$, we have
	$\hat{\phi}_N(v,\omega^\prime) \to \phi(v)$.
	Fix $v \in V$ and let
	$(v_k) \subset \mathcal{D}$ be a sequence such that
	$v_k \to v$ and $\phi(v_k) \to \phi(v)$ as $k \to\infty$.
	We now proceed with a diagonalization argument
	(see, e.g., Corollary~1.16 or 1.18 in \cite{Attouch1984}).
	For each $k \in \natural$
	and every $\omega^\prime \in \Omega_1^\prime$,
	we have $\hat{\phi}_N(v_k,\omega^\prime) \to \phi(v_k)$
	as $N \to\infty$. Moreover
	$\phi(v_k) \to \phi(v)$ as $k \to\infty$.
	Consequently, for each $\omega^\prime \in \Omega_1^\prime$,
	there exists a mapping
	$\mathbb{N} \ni N \mapsto k_{\omega^\prime}(N) \in \mathbb{N}$
	increasing to $\infty$ such that
	$\hat{\phi}_N(v_{k_{\omega^\prime}(N)},\omega^\prime) \to \phi(v)$
	as $N \to\infty$. Since $v_k \to v$  as $k\to\infty$,
	we further have
	$v_{k_{\omega^\prime}(N)} \to v$ as $N \to\infty$
	for each $\omega^\prime \in \Omega_1^\prime$.
	Combining the derivations, we have shown that
	for each $\omega^\prime \in \Omega_1^\prime$ and every
	$v \in V$, there exists a sequence 
	$(v_{k_{\omega^\prime}(N)})$ converging to $v$
	as $N \to\infty$
	and $\hat{\phi}_N(v_{k_{\omega^\prime}(N)},\omega^\prime) \to \phi(v)$
	as $N \to\infty$.
	Since $\Omega_0^\prime \cap \Omega_1^\prime \in \mathcal{F}^\prime$
	and $P^\prime(\Omega_0^\prime \cap \Omega_1^\prime)=1$, we have demonstrated
	the almost sure epiconvergence of $\hat{\phi}_N$ to
	$\phi$.
	\end{proof}

	\section{Law Invariance of $\epiAVaR{}{\varepsilon}$ and  $\smoothAVaR{}{\varepsilon}$}\label{app:LI}%
	Both  $\epiAVaR{}{\varepsilon}$ defined in \eqref{eq:epiAVaR} and  $\smoothAVaR{}{\varepsilon}$ given in \eqref{eq:smoothAVaR} are optimized certainty equivalents in the sense of \cite{ABenTal_MTeboulle_2007a}, i.e.\ they are fully characterized by convex,
	continuous scalar regret functions $v_{\rm epi,\varepsilon}, v_{\mathrm{s},\varepsilon} : \mathbb R \to \mathbb R$ such that for each $ X \in L^2(\Omega,\mathcal{F},P)$,
	\[
	\aligned
	\epiAVaR{X}{\varepsilon}
	&=
	\inf_{t \in \mathbb R} \{\, t + \mathbb E[v_{\rm epi,\varepsilon}(X-t)]\, \},\\
	\smoothAVaR{X}{\varepsilon}
	&=
	\inf_{t \in \mathbb R} \{\, t +  \mathbb E[v_{\mathrm{s},\varepsilon}(X-t)]\, \},
	\endaligned
	\]
	where $\varepsilon > 0$, 
	$v_{\mathrm{s},\varepsilon}(x) \coloneqq (1-\beta)^{-1}\maxo{x}_\varepsilon$,
	and 
	\begin{align*}
		v_{\rm epi,\varepsilon}(x)
		\coloneqq
		\begin{cases}
			-\frac{\varepsilon}{2} & \text{if} \quad x \in (-\infty,-\varepsilon], \\
			\tfrac{1}{2\varepsilon}
			x^2+x \quad 
			& \text{if} \quad x \in
			\big(-\varepsilon, \tfrac{\varepsilon\beta}{1-\beta}\big),
			\\
			\tfrac{1}{1-\beta}\big(x-\tfrac{\varepsilon\beta^2}{2(1-\beta)}\big)
			& \text{otherwise}.
		\end{cases}
	\end{align*}
	The fact that $\epiAVaR{}{\varepsilon}$ can be expressed in the above
	form has been demonstrated in Example~2
	on p.~778 in \cite{Kouri2020}.

	It is not essential for the underlying probability space to be nonatomic for the law invariance of these functionals. Indeed, start by letting $v : \real \to \real$ be continuous and hence, measurable.
	For each $X \in \Lp{2}$ and $t \in \real$,
	let $v(X-t)$ be integrable, which is the case for both $v_{\rm epi,\varepsilon}$ and 
	$v_{\mathrm{s},\varepsilon}$.
	Let $X_1$, $X_2 \in \Lp{2}$ be distributionally equivalent
	with respect to $P$. Since the distribution
	functions of $X_1$ and $X_2$ are equal
	and each distribution function
	uniquely determines a probability law on $\real$
	\cite[Thm.\ 12.4]{Billingsley2012},
	it holds that $P \circ X_1^{-1} = P \circ X_2^{-1}$.
	For all $t \in \real$, we have
	\begin{align*}
		\cE{v(X_1-t)}
		&= \int_\Omega v(X_1(\omega)-t) \du P(\omega)
		= \int_{\real} v(x-t) \du P \circ X_1^{-1}(x)
		\\
		&= \int_{\real} v(x-t) \du P \circ X_2^{-1}(x)
		= \int_\Omega v(X_2(\omega)-t) \du P(\omega)
		= \cE{v(X_2-t)}.
	\end{align*}
	Hence, $\epiAVaR{}{\varepsilon}$
	and $\smoothAVaR{}{\varepsilon}$ are law invariant. As a result, a large class of risk measures/optimized certainty equivalents are law invariant.

\section*{Acknowledgments.}
JM is very grateful to Prof.\ Alexander Shapiro of several discussions
about empirical approximations.  
We would like to thank Prof.\ Darinka Dentcheva for several discussions regarding convergence of empirical quantile functions and additional tips on the literature. We thank the two anonymous reviewers for their helpful comments and suggestions.

\bibliography{JMilz_TMSurowiec_2022a_v2.bbl}

\begin{thebibliography}{10}

\bibitem{Aliprantis2006}
{\sc C.~D. Aliprantis and K.~C. Border}, {\em Infinite {D}imensional
  {A}nalysis: {A} {H}itchhiker's {G}uide}, Springer, Berlin, 3rd~ed., 2006,
  \url{https://doi.org/10.1007/3-540-29587-9}.

\bibitem{PArtzner_FDelbaen_JMEber_DHeath_1999a}
{\sc P.~Artzner, F.~Delbaen, J.-M. Eber, and D.~Heath}, {\em Coherent measures
  of risk}, Math. Finance, 9 (1999), pp.~203--228,
  \url{https://doi.org/10.1111/1467-9965.00068}.

\bibitem{Attouch1984}
{\sc H.~Attouch}, {\em Variational {C}onvergence for {F}unctions and
  {O}perators}, Applicable Mathematics Series, Pitman, Boston, MA, 1984.

\bibitem{Attouch1989}
{\sc H.~Attouch and R.~J.-B. Wets}, {\em Epigraphical analysis}, Ann. Inst. H.
  Poincar\'{e} Anal. Non Lin\'{e}aire, 6 (1989), pp.~73--100,
  \url{https://doi.org/10.1016/S0294-1449(17)30036-7}.

\bibitem{Aubin2009}
{\sc J.-P. Aubin and H.~Frankowska}, {\em Set-{V}alued {A}nalysis}, Mod.
  Birkh\"auser Class., Springer, Boston, MA, 2009,
  \url{https://doi.org/10.1007/978-0-8176-4848-0}.

\bibitem{Beiser2020}
{\sc F.~Beiser, B.~Keith, S.~Urbainczyk, and B.~Wohlmuth}, {\em Adaptive
  sampling strategies for risk-averse stochastic optimization with
  constraints}, IMA Journal of Numerical Analysis,  (2023),
  \url{10.1093/imanum/drac083}.

\bibitem{ABenTal_MTeboulle_2007a}
{\sc A.~Ben-Tal and M.~Teboulle}, {\em An old-new concept of convex risk
  measures: {T}he optimized certainty equivalent}, Math. Finance, 17 (2007),
  pp.~449--476, \url{https://doi.org/10.1111/j.1467-9965.2007.00311.x}.

\bibitem{MPBendsoe_OSigmund_2003}
{\sc M.~P. Bends{\o}e and O.~Sigmund}, {\em Topology {O}ptimization: {T}heory,
  {M}ethods and {A}pplicatons}, Springer, Berlin, 2003,
  \url{https://doi.org/10.1007/978-3-662-05086-6}.

\bibitem{Berner2020}
{\sc J.~Berner, P.~Grohs, and A.~Jentzen}, {\em Analysis of the generalization
  error: empirical risk minimization over deep artificial neural networks
  overcomes the curse of dimensionality in the numerical approximation of
  {B}lack-{S}choles partial differential equations}, SIAM J. Math. Data Sci., 2
  (2020), pp.~631--657, \url{https://doi.org/10.1137/19M125649X}.

\bibitem{Billingsley2012}
{\sc P.~Billingsley}, {\em Probability and {M}easure}, Wiley Ser. Probab.
  Stat., John Wiley \& Sons, Hoboken, NJ, 2012.

\bibitem{Bogachev2007}
{\sc V.~I. Bogachev}, {\em Measure {T}heory}, Springer, Berlin, 2007,
  \url{https://doi.org/10.1007/978-3-540-34514-5}.

\bibitem{Bonnans2000}
{\sc J.~F. Bonnans and A.~Shapiro}, {\em Perturbation {A}nalysis of
  {O}ptimization {P}roblems}, Springer Ser. Oper. Res., Springer, New York,
  2000, \url{https://doi.org/10.1007/978-1-4612-1394-9}.

\bibitem{Borwein1994}
{\sc J.~M. Borwein and A.~S. Lewis}, {\em Strong rotundity and optimization},
  SIAM J. Optim., 4 (1994), pp.~146--158,
  \url{https://doi.org/10.1137/0804008}.

\bibitem{Borwein2010}
{\sc J.~M. Borwein and J.~D. Vanderwerff}, {\em Convex functions:
  constructions, characterizations and counterexamples}, Encyclopedia Math.
  Appl. 109, Cambridge University Press, Cambridge, 2010,
  \url{https://doi.org/10.1017/CBO9781139087322}.

\bibitem{Curi2020}
{\sc S.~Curi, K.~Y. Levy, S.~Jegelka, and A.~Krause}, {\em Adaptive sampling
  for stochastic risk-averse learning}, in Advances in Neural Information
  Processing Systems, H.~Larochelle, M.~Ranzato, R.~Hadsell, M.~Balcan, and
  H.~Lin, eds., vol.~33, Curran Associates, Inc., 2020, pp.~1036--1047.

\bibitem{Davis2022}
{\sc D.~Davis and D.~Drusvyatskiy}, {\em Graphical convergence of subgradients
  in nonconvex optimization and learning}, Math. Oper. Res., 47 (2022),
  pp.~209--231, \url{https://doi.org/10.1287/moor.2021.1126}.

\bibitem{Dentcheva2017}
{\sc D.~Dentcheva, S.~Penev, and A.~Ruszczy{\'{n}}ski}, {\em Statistical
  estimation of composite risk functionals and risk optimization problems},
  Ann. Inst. Statist. Math., 69 (2017), pp.~737--760,
  \url{https://doi.org/10.1007/s10463-016-0559-8}.

\bibitem{Dong2000}
{\sc M.~X. Dong and R.~J.-B. Wets}, {\em Estimating density functions: a
  constrained maximum likelihood approach}, J. Nonparametr. Statist., 12
  (2000), pp.~549--595, \url{https://doi.org/10.1080/10485250008832822}.

\bibitem{Dontchev1993}
{\sc A.~L. Dontchev and T.~Zolezzi}, {\em Well-posed optimization problems},
  vol.~1543 of Lecture Notes in Mathematics, Springer, Berlin, 1993,
  \url{https://doi.org/10.1007/BFb0084195},
  \url{https://doi-org.eaccess.ub.tum.de/10.1007/BFb0084195}.

\bibitem{Dudley2002}
{\sc R.~M. Dudley}, {\em Real {A}nalysis and {P}robability}, vol.~74 of
  Cambridge Studies in Advanced Mathematics, Cambridge University Press,
  Cambridge, 2002, \url{https://doi.org/10.1017/CBO9780511755347}.

\bibitem{Dudley2011}
{\sc R.~M. Dudley and R.~Norvai\v{s}a}, {\em Concrete {F}unctional {C}alculus},
  Springer Monographs in Mathematics, Springer, New York, 2011,
  \url{https://doi.org/10.1007/978-1-4419-6950-7}.

\bibitem{Dupacova1988}
{\sc J.~Dupa{\v{c}}ov{\'a} and R.~J.-B. Wets}, {\em Asymptotic behavior of
  statistical estimators and of optimal solutions of stochastic optimization
  problems}, Ann. Statist., 16 (1988), pp.~1517--1549,
  \url{https://doi.org/10.1214/aos/1176351052}.

\bibitem{HFollmer_ASchied_2002a}
{\sc H.~F{\"o}llmer and A.~Schied}, {\em Convex measures of risk and trading
  constraints}, Finance Stoch., 6 (2002), pp.~429--447,
  \url{https://doi.org/10.1007/s007800200072}.

\bibitem{Garreis2019b}
{\sc S.~Garreis, T.~M. Surowiec, and M.~Ulbrich}, {\em An interior-point
  approach for solving risk-averse {PDE}-constrained optimization problems with
  coherent risk measures}, SIAM J. Optim., 31 (2021), pp.~1--19,
  \url{https://doi.org/10.1137/19M125039X}.

\bibitem{Garreis2019a}
{\sc S.~Garreis and M.~Ulbrich}, {\em A fully adaptive method for the optimal
  control of semilinear elliptic {PDE}s under uncertainty using low-rank
  tensors}, {P}reprint, Technische Universit\"at M\"unchen, M\"unchen, 2019,
  \url{http://go.tum.de/204409}.

\bibitem{Geiersbach2020}
{\sc C.~Geiersbach and T.~Scarinci}, {\em Stochastic proximal gradient methods
  for nonconvex problems in {H}ilbert spaces}, Comput. Optim. Appl., 78 (2021),
  pp.~705--740, \url{https://doi.org/10.1007/s10589-020-00259-y}.

\bibitem{Gwinner2022}
{\sc J.~Gwinner, B.~Jadamba, A.~A. Khan, and F.~Raciti}, {\em Uncertainty
  Quantification in Variational Inequalities: {T}heory, Numerics, and
  Applications}, CRC, Boca Raton, FL, 2022,
  \url{https://doi.org/10.1201/9781315228969}.

\bibitem{Hertlein2022}
{\sc L.~Hertlein, A.-T. Rauls, M.~Ulbrich, and S.~Ulbrich}, {\em An inexact
  bundle method and subgradient computations for optimal control of
  deterministic and stochastic obstacle problems}, in Non-{S}mooth and
  {C}omplementarity-{B}ased {D}istributed {P}arameter {S}ystems: {S}imulation
  and {H}ierarchical {O}ptimization, M.~Hinterm\"uller, R.~Herzog, C.~Kanzow,
  M.~Ulbrich, and S.~Ulbrich, eds., Internat. Ser. Numer. Math. 172,
  Birkh\"{a}user, Cham, 2022, pp.~467--497,
  \url{https://doi.org/10.1007/978-3-030-79393-7_19}.

\bibitem{EHille_RSPhillips_1974}
{\sc E.~Hille and R.~S. Phillips}, {\em Functional {A}nalysis and
  {S}emi-{G}roups}, Colloq. Publ. 31, AMS, Providence, RI, 1974,
  \url{https://doi.org/10.1090/coll/031}.

\bibitem{MHoffhues_WRoemisch_TMSurowiec_2021}
{\sc M.~Hoffhues, W.~R\"{o}misch, and T.~M. Surowiec}, {\em On quantitative
  stability in infinite-dimensional optimization under uncertainty}, Optim.
  Lett., 15 (2021), pp.~2733--2756,
  \url{https://doi.org/10.1007/s11590-021-01707-2}.

\bibitem{Huber1967}
{\sc P.~J. Huber}, {\em The behavior of maximum likelihood estimates under
  nonstandard conditions}, in Proceedings of the Fifth Berkeley Symposium on
  Mathematical Statistics and Probability, Volume 1: Statistics, L.~M. {Le Cam}
  and J.~Neyman, eds., Berkeley, CA, 1967, University of California Press,
  pp.~221--233.

\bibitem{Juditsky2022}
{\sc A.~Juditsky, J.~Kwon, and E.~Moulines}, {\em Unifying mirror descent and
  dual averaging}, Math. Program.,  (2022),
  \url{https://doi.org/10.1007/s10107-022-01850-3}.

\bibitem{King1993}
{\sc A.~J. King and R.~T. Rockafellar}, {\em Asymptotic theory for solutions in
  statistical estimation and stochastic programming}, Math. Oper. Res., 18
  (1993), pp.~148--162, \url{https://doi.org/10.1287/moor.18.1.148}.

\bibitem{Kleywegt2002}
{\sc A.~J. Kleywegt, A.~Shapiro, and T.~Homem-de Mello}, {\em The sample
  average approximation method for stochastic discrete optimization}, SIAM J.
  Optim., 12 (2002), pp.~479--502,
  \url{https://doi.org/10.1137/S1052623499363220}.

\bibitem{Korf2001}
{\sc L.~A. Korf and R.~J.-B. Wets}, {\em Random {LSC} {F}unctions: {A}n
  {E}rgodic {T}heorem}, Math. Oper. Res., 26 (2001), pp.~421--445,
  \url{https://doi.org/10.1287/moor.26.2.421.10548}.

\bibitem{Kouri2016}
{\sc D.~P. Kouri and T.~M. Surowiec}, {\em Risk-averse {PDE}-constrained
  optimization using the conditional value-at-risk}, SIAM J. Optim., 26 (2016),
  pp.~365--396, \url{https://doi.org/10.1137/140954556}.

\bibitem{Kouri2018a}
{\sc D.~P. Kouri and T.~M. Surowiec}, {\em Existence and optimality conditions
  for risk-averse {PDE}-constrained optimization}, SIAM/ASA J. Uncertain.
  Quantif., 6 (2018), pp.~787--815, \url{https://doi.org/10.1137/16M1086613}.

\bibitem{Kouri2020}
{\sc D.~P. Kouri and T.~M. Surowiec}, {\em Epi-regularization of risk
  measures}, Math. Oper. Res., 45 (2020), pp.~774--795,
  \url{https://doi.org/10.1287/moor.2019.1013}.

\bibitem{Kouri2020a}
{\sc D.~P. Kouri and T.~M. Surowiec}, {\em Risk-averse optimal control of
  semilinear elliptic {PDE}s}, ESAIM Control. Optim. Calc. Var., 26 (2020),
  \url{https://doi.org/10.1051/cocv/2019061}.

\bibitem{Lachout2005}
{\sc P.~Lachout, E.~Liebscher, and S.~Vogel}, {\em Strong convergence of
  estimators as {$\epsilon_n$}-minimisers of optimisation problems}, Ann. Inst.
  Statist. Math., 57 (2005), pp.~291--313,
  \url{https://doi.org/10.1007/BF02507027}.

\bibitem{BSLazarov_OSigmund_2011}
{\sc B.~S. Lazarov and O.~Sigmund}, {\em Filters in topology optimization based
  on {H}elmholtz-type differential equations}, Internat. J. Numer. Methods
  Engrg., 86 (2011), pp.~765--781, \url{https://doi.org/10.1002/nme.3072}.

\bibitem{Liu2022}
{\sc J.~Liu, Y.~Cui, and J.-S. Pang}, {\em Solving nonsmooth and nonconvex
  compound stochastic programs with applications to risk measure minimization},
  Math. Oper. Res.,  (2022), \url{https://doi.org/10.1287/moor.2021.1247}.

\bibitem{Mei2018}
{\sc S.~Mei, Y.~Bai, and A.~Montanari}, {\em The landscape of empirical risk
  for nonconvex losses}, Ann. Statist., 46 (2018), pp.~2747--2774,
  \url{https://doi.org/10.1214/17-AOS1637}.

\bibitem{Milz2022}
{\sc J.~Milz}, {\em Consistency of {M}onte {C}arlo estimators for risk-neutral
  {PDE}-constrained optimization}, arXiv preprint arXiv:2204.04809,  (2022),
  \url{http://arxiv.org/abs/2204.04809}.

\bibitem{Milz2022c}
{\sc J.~Milz}, {\em Reliable error estimates for optimal control of linear
  elliptic {PDE}s with random inputs}, arXiv preprint arXiv:2206.09160,
  (2022), \url{https://arxiv.org/abs/2206.09160}.

\bibitem{Milz2021}
{\sc J.~Milz}, {\em Sample average approximations of strongly convex stochastic
  programs in {H}ilbert spaces}, Optim. Lett., 17 (2023), pp.~471--492,
  \url{https://doi.org/10.1007/s11590-022-01888-4}.

\bibitem{UMosco_1969}
{\sc U.~Mosco}, {\em Convergence of convex sets and of solutions of variational
  inequalities}, Advances in Mathematics, 3 (1969), pp.~510--585,
  \url{https://doi.org/10.1016/0001-8708(69)90009-7}.

\bibitem{Nelsen2021}
{\sc N.~H. Nelsen and A.~M. Stuart}, {\em The random feature model for
  input-output maps between {B}anach spaces}, SIAM J. Sci. Comput., 43 (2021),
  pp.~A3212--A3243, \url{https://doi.org/10.1137/20M133957X}.

\bibitem{Pflug1995}
{\sc G.~{\relax Ch}. Pflug}, {\em Asymptotic stochastic programs}, Math. Oper.
  Res., 20 (1995), pp.~769--789, \url{https://doi.org/10.1287/moor.20.4.769}.

\bibitem{Pflug2003}
{\sc G.~{\relax Ch}. Pflug}, {\em Stochastic optimization and statistical
  inference}, in Stochastic Programming, Handbooks Oper. Res. Manag. Sci. 10,
  Elsevier, 2003, pp.~427 -- 482,
  \url{https://doi.org/10.1016/S0927-0507(03)10007-2}.

\bibitem{Qi2021}
{\sc Z.~Qi, Y.~Cui, Y.~Liu, and J.-S. Pang}, {\em Asymptotic properties of
  stationary solutions of coupled nonconvex nonsmooth empirical risk
  minimization}, Math. Oper. Res.,  (2021),
  \url{https://doi.org/10.1287/moor.2021.1198}.

\bibitem{Rachev1982a}
{\sc S.~T. Rachev}, {\em Minimal metrics in the random variables space}, in
  Probability and Statistical Inference: Proceedings of the 2nd Pannonian
  Symposium on Mathematical Statistics, Bad Tatzmannsdorf, Austria, June
  14--20, 1981, W.~Grossmann, G.~{\relax Ch}. Pflug, and W.~Wertz, eds.,
  Springer, Dordrecht, 1982, pp.~319--327,
  \url{https://doi.org/10.1007/978-94-009-7840-9_30}.

\bibitem{Rachev1985a}
{\sc S.~T. Rachev}, {\em The {M}onge--{K}antorovich mass transference problem
  and its stochastic applications}, Theory Probab. Appl., 29 (1985),
  pp.~647--676, \url{https://doi.org/10.1137/1129093}.

\bibitem{Rachev1985}
{\sc S.~T. Rachev}, {\em On a class of minimal functionals on a space of
  probability measures}, Theory Probab. Appl., 29 (1985), pp.~41--49,
  \url{https://doi.org/10.1137/1129004}.

\bibitem{STRachev_WRoemisch_2002}
{\sc S.~T. Rachev and W.~R\"{o}misch}, {\em Quantitative stability in
  stochastic programming: the method of probability metrics}, Math. Oper. Res.,
  27 (2002), pp.~792--818, \url{https://doi.org/10.1287/moor.27.4.792.304}.

\bibitem{Robinson1996}
{\sc S.~M. Robinson}, {\em Analysis of sample-path optimization}, Math. Oper.
  Res., 21 (1996), pp.~513--528, \url{https://doi.org/10.1287/moor.21.3.513}.

\bibitem{WRoemisch_RSchultz_1991}
{\sc W.~R\"{o}misch and R.~Schultz}, {\em Distribution sensitivity in
  stochastic programming}, Math. Program., 50 (1991), pp.~197--226,
  \url{https://doi.org/10.1007/BF01594935}.

\bibitem{WRoemisch_TMSurowiec_2021}
{\sc W.~R{\"o}misch and T.~M. Surowiec}, {\em Asymptotic properties of {M}onte
  {C}arlo methods in elliptic {PDE}-constrained optimization under
  uncertainty}, arXiv preprint arXiv:2106.06347,  (2021),
  \url{https://arxiv.org/abs/2106.06347}.

\bibitem{Royset2019}
{\sc J.~O. Royset}, {\em Approximations of semicontinuous functions with
  applications to stochastic optimization and statistical estimation}, Math.
  Program., 184 (2020), pp.~289--318,
  \url{https://doi.org/10.1007/s10107-019-01413-z}.

\bibitem{Royset2020}
{\sc J.~O. Royset and R.~J.-B. Wets}, {\em Variational analysis of constrained
  {M}-estimators}, Ann. Statist., 48 (2020), pp.~2759--2790,
  \url{https://doi.org/10.1214/19-AOS1905}.

\bibitem{Ruszczynski2006b}
{\sc A.~Ruszczy{\'{n}}ski and A.~Shapiro}, {\em Optimization of {C}onvex {R}isk
  {F}unctions}, Math. Oper. Res., 31 (2006), pp.~433--452,
  \url{https://doi.org/10.1287/moor.1050.0186}.

\bibitem{Shapiro1989}
{\sc A.~Shapiro}, {\em Asymptotic properties of statistical estimators in
  stochastic programming}, Ann. Statist., 17 (1989), pp.~841--858,
  \url{https://doi.org/10.1214/aos/1176347146}.

\bibitem{Shapiro1991}
{\sc A.~Shapiro}, {\em Asymptotic analysis of stochastic programs}, Ann. Oper.
  Res., 30 (1991), pp.~169--186, \url{https://doi.org/10.1007/BF02204815}.

\bibitem{Shapiro1993}
{\sc A.~Shapiro}, {\em Asymptotic behavior of optimal solutions in stochastic
  programming}, Math. Oper. Res., 18 (1993), pp.~829--845,
  \url{https://doi.org/10.1287/moor.18.4.829}.

\bibitem{Shapiro2000c}
{\sc A.~Shapiro}, {\em Statistical {I}nference of {S}tochastic {O}ptimization
  {P}roblems}, in Probabilistic Constrained Optimization: Methodology and
  Applications, S.~P. Uryasev, ed., Probabilistic Constrained Optimization,
  Springer, Boston, MA, 2000, pp.~282--307,
  \url{https://doi.org/10.1007/978-1-4757-3150-7_16}.

\bibitem{Shapiro2013}
{\sc A.~Shapiro}, {\em Consistency of sample estimates of risk averse
  stochastic programs}, J. Appl. Probab., 50 (2013), pp.~533--541,
  \url{https://doi.org/10.1239/jap/1371648959}.

\bibitem{Shapiro2013a}
{\sc A.~Shapiro}, {\em On {K}usuoka representation of law invariant risk
  measures}, Math. Oper. Res., 38 (2013), pp.~142--152,
  \url{https://doi.org/10.1287/moor.1120.0563}.

\bibitem{Shapiro2021}
{\sc A.~Shapiro, D.~Dentcheva, and A.~Ruszczy{\'{n}}ski}, {\em Lectures on
  {S}tochastic {P}rogramming: {M}odeling and {T}heory}, MOS-SIAM Ser. Optim.,
  SIAM, Philadelphia, PA, 3rd~ed., 2021,
  \url{https://doi.org/10.1137/1.9781611976595}.

\bibitem{Shapiro2005}
{\sc A.~Shapiro and A.~Nemirovski}, {\em On complexity of stochastic
  programming problems}, in Continuous Optimization: Current Trends and Modern
  Applications, V.~Jeyakumar and A.~Rubinov, eds., Appl. Optim. 99, Springer,
  Boston, MA, 2005, pp.~111--146,
  \url{https://doi.org/10.1007/0-387-26771-9_4}.

\bibitem{Sigmund2013}
{\sc O.~Sigmund and K.~Maute}, {\em Topology optimization approaches},
  Structural and Multidisciplinary Optimization, 48 (2013), pp.~1031--1055,
  \url{https://doi.org/10.1007/s00158-013-0978-6}.

\bibitem{Surowiec2018}
{\sc T.~M. Surowiec}, {\em Numerical optimization methods for the optimal
  control of elliptic variational inequalities}, in Frontiers in
  PDE-Constrained Optimization, H.~Antil, D.~P. Kouri, M.-D. Lacasse, and
  D.~Ridzal, eds., IMA Vol. Math. Appl. 163, Springer, New York, NY, 2018,
  pp.~123--170, \url{https://doi.org/10.1007/978-1-4939-8636-1_4}.

\bibitem{Wechsung2021}
{\sc F.~Wechsung, A.~Giuliani, M.~Landreman, A.~J. Cerfon, and G.~Stadler},
  {\em Single-stage gradient-based stellarator coil design: stochastic
  optimization}, Nuclear Fusion, 62 (2022), p.~076034,
  \url{https://doi.org/10.1088/1741-4326/ac45f3}.

\bibitem{Zeidler1990}
{\sc E.~Zeidler}, {\em Nonlinear {F}unctional {A}nalysis and its {A}pplications
  \normalfont{{II}/{B}}: {N}onlinear {M}onotone {O}perators}, Springer, New
  York, 1990, \url{https://doi.org/10.1007/978-1-4612-0981-2}.

\bibitem{Zervos1999}
{\sc M.~Zervos}, {\em On the epiconvergence of stochastic optimization
  problems}, Math. Oper. Res., 24 (1999), pp.~495--508,
  \url{https://doi.org/10.1287/moor.24.2.495}.

\end{thebibliography}

\end{document}